\DeclareSymbolFont{cyrletters}{OT2}{wncyr}{m}{n}
\DeclareMathSymbol{\Sha}{\mathalpha}{cyrletters}{"58}
\newcolumntype{L}[1]{>{\raggedright\let\newline\\\arraybackslash\hspace{0pt}}m{#1}}  %%%change this if tables are warped
\newcolumntype{R}[1]{>{\raggedleft\let\newline\\\arraybackslash\hspace{0pt}}m{#1}}
\theoremstyle{definition}
\newtheorem{thm}{theorem}[section]
\newtheorem{lemma}[thm]{Lemma}
\newtheorem{theorem}[thm]{Theorem}
\newtheorem{proposition}[thm]{Proposition}
\newtheorem{definition}[thm]{Definition}
\newtheorem{conjecture}[thm]{Conjecture}
\theoremstyle{plain}
\newtheorem{example}{Example}
\numberwithin{equation}{section}
\numberwithin{table}{section}
\title{Elliptic Curves and Thompson's Sporadic Simple Group}
\author{Maryam Khaqan}
\address{Department of Mathematics, Emory University, 400 Dowman Drive, Atlanta, GA 30322}
\email{maryam.khaqan@emory.edu}
\subjclass[2010]{11F22, 11F37}
\newcommand{\lamda}{\ensuremath{\lambda}}
\newcommand{\Fg}{\ensuremath{\mathcal{F}_g}}
\newcommand{\F}{\ensuremath{\mathcal{F}}}
\newcommand{\Fgt}{\ensuremath{\mathcal{F}^{\lambda}_{g}(\tau)}}
\newcommand{\Rsums}{Rademacher sums}
\newcommand{\Rsum}{Rademacher sum}
\newcommand{\Fwh}{f^{wh}_g}
\newcommand{\FwhA}{f^{wh}_{21A}}
\newcommand{\FwhB}{f^{wh}_{30AB}}
\newcommand{\Fcf}{f^{(i)}_g}
\newcommand{\alphg}{\ensuremath{\alpha^{\lamda}_g}}
\newcommand{\Th}{\ensuremath{Th}}
\newcommand{\BSD}{Birch and Swinnerton-Dyer Conjecture}
\newcommand{\Reg}{\ensuremath{\operatorname{Reg}}} %regulator of E
\newcommand{\Spg}{R}
\newcommand{\calQ}{\ensuremath{\mathcal{Q}}}
\newcommand{\G}{\ensuremath{\mathscr{G}}}
\newcommand{\Gal}{\ensuremath{\operatorname{Gal}}}
\newcommand{\N}{\ensuremath{\mathbb{N}}}  
\newcommand{\Z}{\ensuremath{\mathbb{Z}}} 
\newcommand\smod[1]{\ \left(\operatorname{mod} #1\right)}
\newcommand{\Q}{\ensuremath{\mathbb{Q}}}
\newcommand{\R}{\ensuremath{\mathbb{R}}}
\newcommand{\C}{\ensuremath{\mathbb{C}}}
\newcommand{\M}{\ensuremath{\mathbb{M}}}
\newcommand{\z}{\ensuremath{\tau}}
\newcommand{\pr}{\ensuremath{\operatorname{pr}}}
\newcommand{\SL}{\operatorname{SL}}
\renewcommand{\Re}{\operatorname{Re}}
\newcommand{\SLZ}{\ensuremath{\SL_2(\Z)}}
\newcommand{\tr}{\operatorname{tr}}
\newcommand{\Sel}{\ensuremath{\operatorname{Sel}}}
\newcommand{\ord}{\ensuremath{\operatorname{ord}}}
    \newcommand{\abcd}{\ensuremath{\left(\begin{smallmatrix} a & b \\ c & d \end{smallmatrix}\right)}}
    \newcommand{\bigabcd}{\ensuremath{\begin{pmatrix} a & b \\ c & d \end{pmatrix}}}
\newcommand{\be}{\begin{equation}}
\newcommand{\ee}{\end{equation}}
\newcommand{\bd}{\begin{definition}}
\newcommand{\ed}{\end{definition}}
\renewcommand{\[}{\be}
\renewcommand{\]}{\ee}
\newcommand{\eps}{\varepsilon}
\newcommand{\HH}{\ensuremath{\mathbb{H}}}
\newcommand{\Tr}{\ensuremath{\operatorname{Tr}}}
\begin{document}
%%%%%%%%%%%%%%%%%% ABSTRACT %%%%%%%%%%%%%%%%%%%%%%%%%%%%%%%%%%
   \begin{abstract}
    
We characterize all infinite-dimensional graded virtual modules for Thompson's sporadic simple group whose graded traces are weight $\frac 32$ weakly holomorphic modular forms satisfying certain special properties. We then use these modules to detect the non-triviality of Mordell--Weil, Selmer, and Tate-Shafarevich groups of quadratic twists of certain elliptic curves.
    
    \end{abstract}
%
%%%%%%%%%%%%%% Title %%%%%%%%%%%%%%%%%%%%
\maketitle
%%%%%%%%%%%%%% Section 1: Introduction %%%%%%%%%%%%%%%%%%%%
\section{Introduction}\label{Intro}

In 1978, McKay and Thompson observed \citep{Thompson} that the first few coefficients of the normalized elliptic modular invariant $J(\z)=q^{-1}+196884q+21493760q^2+864299970q^3+O(q^4),$ a central object in the theory of modular forms, can be written as sums involving the first few dimensions of irreducible representations of the monster group $\mathbb{M}$, e.g., 
	\begin{align}
    \begin{split}
        196884&={\color{white}2\cdot1}1+{\color{white}2\cdot1}{196883}\\
        21493760&={\color{white}2\cdot1}1 + {\color{white}2\cdot1}196883 + 21296876\\
        864299970&={\color{white}{1}}2\cdot1+ {\color{white}{1}}2\cdot196883 +  21296876 + 842609326.
    \end{split}
    \end{align}
This coincidence inspired Thompson's conjecture \citep{Thmp} that there is an infinite-dimensional $\M$-module $V=\displaystyle\bigoplus_{n\geq -1}V_n$ whose graded dimension is $J(\z)$ and whose McKay--Thompson series 
    \be 
    T_g(\z):=\sum_{n\geq -1} \tr(g|V_n)q^n
    \ee
are distinguished functions on the upper half-plane. Conway and Norton \citep{ConwayNorton} explicitly described the relevant McKay-Thompson series, and also christened this phenomenon ``monstrous moonshine.'' Their conjecture was proven by Borcherds \citep{BorcherdsMonster} (building on work by Frenkel, Lepowsky and Meurman \citep{FLM}) in 1992.  In the few decades since the first observations of McKay and Thompson, it has become clear that monstrous moonshine is just the first of a series of similar phenomena encompassing several finite groups and their counterparts in the world of modular forms. 

{Generalized moonshine \citep{carnahangeneralized} (see also, \citep{NortonGeneralized,ConwayNorton,Queen}), for example, relates various subquotients of the Monster to other weight zero modular forms.}
Umbral moonshine \citep{CDHUmbralMoonshine,CDHUmbralMoonshineNiemeier14} (see also \citep{UmbralMoonshineProof-DuncanGriffinOno,Gannon} and \citep{CDHWt1Jacobi}), on the other hand, relates the 23 umbral groups (each of which is a quotient of the automorphism group of one of the 23 Niemeier lattices) to weight $\frac12$ \textit{mock modular forms.} Thompson moonshine, conjectured by Harvey and Rayhaun \citep{HarveyRayhaun} in 2015 {and proven by Griffin and Mertens in \citep{MichealsGriffinMertensThompson},} involves Thompson's sporadic simple group \Th, and certain weight $\frac12$ modular forms. {(We remark here that the Thompson group, being a subgroup of the Monster, also appears in the generalized moonshine setting mentioned above. For the purpose of this paper, ``Thompson moonshine'' refers to the Harvey and Rayhaun version.)}

Recently, in \citep{DuncanMertensOno-NaturePariahMoonshineOnan,DuncanMertensOno-OnanArxiv}, Duncan, Mertens, and Ono discovered the first instance of moonshine for the O'Nan group, one of the so-called pariah groups (i.e., a sporadic simple group which is not a subquotient of the monster group), where the functions involved are modular forms of weight $\frac32.$ Their work is not only a contribution to the theory of moonshine, it also serves another important purpose: In the same paper, they use their O'Nan-module to study properties of quadratic twists of certain elliptic curves and thus use moonshine to provide insight into objects that are central to current research in number theory. 

While number theory's contribution to moonshine is ubiquitous and irrefutable, O'Nan moonshine is one of the first instances where we see moonshine's direct contribution to number theory. Such a role-reversal is our primary motivation for this work. 

We begin this work by proving the existence of a family of infinite-dimensional graded $Th$-modules whose McKay--Thompson series are weight $\frac32$ modular forms that satisfy certain properties (see \Cref{thm:main}). The techniques we use to prove this are similar to ones used in Griffin and Mertens' work \citep{MichealsGriffinMertensThompson} to prove the Thompson moonshine conjecture \citep{HarveyRayhaun}. (These techniques were first suggested by Thompson, and subsequently used by Atkin, Fong and Smith \citep{AtkinFongSmith,Smith} to prove monstrous moonshine abstractly.) On the other hand, our McKay--Thompson series are weight $\frac32$ modular forms (in contrast to the weight $\frac12$ forms of \citep{HarveyRayhaun}) and the role played by theta functions in their paper is taken up by weight $\frac32$ cusp forms in ours. The involvement of weight $\frac32$ cusp forms allows us to employ an approach similar to Duncan, Mertens, and Ono (in \citep{DuncanMertensOno-NaturePariahMoonshineOnan,DuncanMertensOno-OnanArxiv}): We exploit the existing relationship between these forms and elliptic curves to study geometric invariants of various elliptic curves. This is the content of \Cref{Ell14,Ell19}.

Our result regarding the existence of a family of Thompson modules is, in fact, a classification result. We classify all infinite-dimensional graded modules $W=\bigoplus W_n$ (see \Cref{thm:main}) for the Thompson group whose McKay--Thompson series take the form 
    \be 
        \Fg(\z):=6q^{-5}+\sum_{n>0}\tr(g|W_n)q^n 
    \ee 
and satisfy the following properties (cf. \Cref{MainProp}): 
    \begin{enumerate}
        \item For each $ g \in \Th,$ the corresponding McKay--Thompson series $\Fg(\z)$ is a weight $\frac32$ weakly holomorphic modular form of a specific level and multiplier system, and satisfies the Kohnen plus space condition. 
        \item Each McKay--Thompson series $\Fg(\z)$ has integer coefficients and is uniquely determined --- up to the addition of certain cusp forms --- by its polar parts at the cusps, which are specified in a uniform way. (See \Cref{secfunc} for details.)
    \end{enumerate}
We note here that properties (1) and (2) listed above ensure that the functions $\Fg(\z)$ are, up to the addition of cusp forms, simply {\Rsums} projected to the Kohnen plus space (see \Cref{secPrelim} for background on {\Rsums}). 

The connection between {\Rsums} and moonshine was first proposed in \citep{DuncanFrenkel}, where the McKay--Thompson series that appear in monstrous moonshine were characterized completely in terms of {\Rsums} of weight 0. In particular, it was shown that the so-called \textit{genus-zero property} of monstrous moonshine is equivalent to the fact that the McKay--Thompson series of the Monster module coincide (up to a constant) with corresponding Rademacher sums of weight 0. It was later argued in \citep{CDrsums,CDHUmbralMoonshine} that the correct analogue of the genus zero property in the case of Umbral (and Mathieu) moonshine is that the corresponding McKay--Thompson series must coincide with the relevant Rademacher sums in each case (see also \citep{CDHWt1Jacobi,Duncan2019FromMonster}). Here we take this perspective and hence consider it natural, from the point of view of moonshine, to ask for our McKay--Thompson series to satisfy the properties listed above. 

To prove our classification result, we first construct spaces of weakly holomorphic modular forms of the appropriate level and multiplier for each $g \in \Th.$ We use {\Rsums} and eta-quotients to do this. Since we can explicitly compute the Fourier coefficients of these forms at various cusps, we can restrict our attention to the subspace of forms that satisfy properties (1) and (2). For a collection of these forms to be the McKay--Thompson series of a virtual module (as in \Cref{thm:main}), they must satisfy congruences modulo certain powers of primes that divide the order of the Thompson group (see \Cref{sec:I}). A complete description of these congruences can be obtained using Thompson's reformulation (\citep{Smith}) of Brauer's characterization of generalized characters. We prove that our alleged McKay--Thompson series satisfy the congruences mentioned above in \Cref{sec:I}. We note here that it would be interesting to consider the analogous classification for the O'Nan group, building on the work already done in \citep{DuncanMertensOno-OnanArxiv}. 

Once we have proven the existence of the Thompson modules, we use their properties to help detect the non-triviality of Mordell--Weil, Selmer, and Tate--Shafarevich groups of quadratic twists of certain elliptic curves (see \Cref{Ell14,Ell19}). To state our main results, we let $E$ be an elliptic curve over $\Q,$ and for $d<0$ a fundamental discriminant, we let $E^d$ denote the $d^{th}$ quadratic twist of $E.$ (We refer the reader to \citep{Milne,Silverman1} and \citep{Tate} for background on elliptic curves.) We further let $E(\Q)$ denote the set of $\Q$-rational points on $E,$ i.e., the set of points on $E$ whose coordinates are rational numbers. Then, $E(\Q)$ has the structure of a finitely generated abelian group (by the Mordell--Weil theorem \citep{Silverman1}), i.e., $E(\Q)=\Z^r\oplus E(\Q)_{\rm tor}.$ Here $r\in \Z_{\geq }0$ is called the (algebraic) rank of $E,$ and $E(\Q)_{\rm tor}$ is a finite abelian group. Computing the rank of a general elliptic curve is considered a hard problem in number theory. 

One way of approaching this problem is to study the $L$-function associated to $E,$ defined for $s\in\C$ with $\Re(s)> \frac32$ by \be\label{eqn:Lfunction} L_E(s)=\prod_{p \text{ prime}}  \left(1-a_p p^{-s} +\epsilon(p)p^{1-2s}\right)^{-1}.\ee
Here, $a_p:=p+1-\#E(\mathbb{F}_p)$ where $E(\mathbb{F}_p)$ is the group of $\mathbb{F}_p$-rational points on the mod $p$ reduction of $E$, and $\epsilon(p)\in\{0,1\}$ (see \citep{Silverman1} for a precise definition and further discussion on $L$-functions). The (weak) \emph{\BSD} states that the order of vanishing of $L_E(s)$ at $s=1$ is equal to the rank $r$ of $E.$ Thus, studying the behavior of $L_E(s)$ near $s=1$ is one way of tackling the problem of computing the rank of $E.$

Note that \textit{a priori,} the product in \cref{eqn:Lfunction} only converges for $\Re(s)> \frac32,$ so part of the conjecture is that $L_E(s)$ can be analytically continued to a function that converges near $s=1.$ This part follows from the Modularity Theorem (proven in \citep{ModularityTheorem} by extending the results of \citep{WilesModularity1,WilesModularity2}), which states that corresponding to every elliptic curve over $\Q,$ there exists a weight 2 newform of level equal to the conductor of $E,$ whose $L$-function coincides with $L_E(s).$ The $L$-function associated to a cusp form $f=\sum^\infty_{n=1}a_nq^n$ of weight $k$ is defined on $\Re(s)> 1+\frac k2$ and extends analytically to a holomorphic function on $\C$ \citep{AtkinLehnerHecke}. Hence it is possible to talk about the behavior of $L_E(s)$ as $s\to 1,$ for example. The {\BSD} has been proven in special cases (see for example \citep{BhargavaShankar}). In particular, it is known that if $L_E(1)\not=0,$ then $E(\Q)$ is finite \citep{GrossZagier,KolyvaginBSD}. We will use this result to prove our first theorem about elliptic curves.

To state our results, we let \[\F(\z)=6(q^{-5}+85995q^3 - 565760q^4 + 52756480q^7 - 190356480q^8+O(q^{11}))\] be the unique weakly holomorphic modular form of weight $\frac32$ and level $4$ in the plus space whose Fourier expansion is of the form $\F(\z)=6q^{-5}+O(q).$ (Note that $\F(\z)$ is relevant for us because for each of the graded $\Th$-modules $W=\bigoplus_{n>0}W_n$ described in \Cref{thm:main}, the graded dimension of $W$ is $\F(\z).$) Let $c(d)$ denote the $|d|^{th}$ Fourier coefficient in the expansion of $\F(\z).$ We denote by $\left(\frac mn\right)$ the usual Kronecker symbol \citep{kronecker}, then we have the following theorem.
    \begin{theorem}\label{Ell19}
    Let $d<0$ be a fundamental discriminant which satisfies $\left(\frac{d}{19}\right)=-1.$ Let $E$ be an elliptic curve of conductor 19, and let $E^d$ denote the $d^{th}$ quadratic twist of $E$. If $c(d)\not\equiv0 \pmod{ 19},$ then the Mordell--Weil group $E^d(\Q)$ is finite. 
    \end{theorem}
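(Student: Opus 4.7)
The plan is to reduce the finiteness of $E^d(\Q)$ to the non-vanishing of the central $L$-value $L(E^d,1)$, which implies the finiteness of the Mordell--Weil group by the theorems of Gross--Zagier \citep{GrossZagier} and Kolyvagin \citep{KolyvaginBSD}. That non-vanishing is in turn deduced from the hypothesis $c(d)\not\equiv 0\pmod{19}$ via a Shimura--Waldspurger-type congruence tying $c(d)$ to the coefficients of a weight $\frac{3}{2}$ cusp form lifting to the newform attached to $E$.

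Because the Mordell--Weil rank is an isogeny invariant, I would pass to the strong Weil representative of the $\Q$-isogeny class of conductor-$19$ elliptic curves, corresponding to the unique normalized newform $f_{19}\in S_2^{\mathrm{new}}(\Gamma_0(19))$. To connect $c(d)$ to $L(E^d,1)$, I would introduce a weight $\frac{3}{2}$ cusp form $g=\sum_n a(n)q^n$ in the Kohnen plus space $S_{3/2}^{+}(\Gamma_0(76))$ whose Shimura lift is $f_{19}$. Kohnen--Zagier's sharpening of Waldspurger's theorem then yields
\[
a(|d|)^{2}\;=\;\kappa\cdot|d|^{1/2}\cdot L(E^d,1)
\]
for fundamental discriminants $d<0$ with $\left(\frac{d}{19}\right)=-1$, where $\kappa\in\Q^{\times}$ is an explicit nonzero constant. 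The local hypothesis $\left(\frac{d}{19}\right)=-1$ is precisely what forces the sign of the functional equation of $L(E^d,s)$ to be $+1$ and ensures non-vanishing of the local factor at $19$ in Waldspurger's formula.

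The crux of the argument is to establish a mod-$19$ congruence of the form
\[
c(d)\;\equiv\;\alpha\cdot a(|d|)\pmod{19}
\]
for all fundamental discriminants $d<0$ with $\left(\frac{d}{19}\right)=-1$, for some unit $\alpha\in(\Z/19\Z)^{\times}$. To produce this I would regard $\F$ as an oldform at level $76$ via the inclusion $\Gamma_0(76)\subset\Gamma_0(4)$ and project to the plus space. The resulting weakly holomorphic form agrees with a multiple of $g$ up to addition of a cusp form orthogonal (in the Hecke-algebra sense) to the Shimura fibre over $f_{19}$, plus a form whose Fourier coefficients indexed by the relevant $d$ are $19$-integrally trivial. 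Combined with Sturm's bound for weight $\frac{3}{2}$ forms of level $76$, verification of this congruence becomes a finite computation, and is compatible with the general philosophy of mod-$p$ Hecke-algebra comparisons of congruent forms à la Deligne--Serre.

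The main obstacle is the production of this mod-$19$ bridge. Because $\F$ is weakly holomorphic rather than cuspidal, one must handle the polar term $6q^{-5}$ and any Eisenstein-type contributions carefully, and verify that they do not spuriously cancel the cuspidal contribution modulo $19$ for the discriminants of interest. Once the bridge is in place, the theorem follows immediately: $c(d)\not\equiv 0\pmod{19}$ forces $a(|d|)\neq 0$, hence $L(E^d,1)\neq 0$ by Kohnen--Zagier, and hence $E^d(\Q)$ is finite by Kolyvagin--Gross--Zagier.
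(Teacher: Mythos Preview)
Your overall architecture matches the paper's: tie $c(d)$ to a weight-$\tfrac32$ cusp form coefficient modulo $19$, invoke Kohnen--Waldspurger to get $L(E^d,1)\neq 0$, then Kolyvagin. The gap is exactly where you flag it: the mod-$19$ bridge. Your proposal to view $\F$ at level $76$ and apply Sturm's bound to $\F-\alpha g$ cannot work as stated, because the congruence $c(n)\equiv\alpha\,a(n)\pmod{19}$ is \emph{false} for general $n$ (already at $n=3$ one finds $c(3)\equiv 6$ while $a(3)=0$); it only holds for $n=|d|$ with $\left(\tfrac{d}{19}\right)=-1$, and Sturm's bound cannot certify a congruence supported on a proper subset of Fourier coefficients.

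The paper resolves this by inserting an intermediate form. It writes the McKay--Thompson series for the $19A$ class as $\F_{19A}=f^{wh}_{19A}+18\,f_{19A}$, where $f^{wh}_{19A}=6R^{[-5],+}_{3/2,76}$ is the level-$76$ Rademacher sum in the plus space and $f_{19A}$ is your cusp form $g$. Two independent facts are then established:
\begin{enumerate}
\item[(i)] $\F\equiv \F_{19A}\pmod{19}$ on \emph{all} coefficients. This is the character-theory congruence $\dim W_n\equiv\tr(g_{19}\mid W_n)\pmod{19}$ forced by the existence of the Thompson module, and it \emph{is} verified by Sturm's bound---but applied to $\F-\F_{19A}$, not to $\F-\alpha g$.
\item[(ii)] The $|d|$-th coefficient of $f^{wh}_{19A}$ is \emph{exactly zero} (not merely $\equiv 0$) when $\left(\tfrac{d}{19}\right)=-1$. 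This comes from identifying the Rademacher-sum coefficients with twisted traces of singular moduli $\Tr^{(19)}_5(J^{(19,+)};|d|)$ (Proposition~\ref{traces}); these vanish because $5d$ is not a square modulo $76$, so the set $\calQ^{(19)}_{|5d|}$ of quadratic forms being summed over is empty (Lemma~\ref{zerotraces}).
\end{enumerate}
Your unnamed ``form whose Fourier coefficients indexed by the relevant $d$ are $19$-integrally trivial'' is precisely $f^{wh}_{19A}$, and the mechanism for its vanishing is (ii), which has nothing to do with $19$-adic valuations---it is a sum over an empty set of binary quadratic forms. Without identifying this intermediate Rademacher sum and its trace-of-singular-moduli description, your sketch has no tool to separate the cuspidal contribution from $c(d)$, and the Sturm-bound step as you describe it does not go through.
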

We can state a stronger result for elliptic curves of conductor 14, one which depends on a local version of the strong form of {\BSD.} To do this, we first establish some notation.

For $E$ an elliptic curve of rank $r,$ let $\#E(\Q)_{\rm tor}$ denote the order of the torsion subgroup of the $\Q$-rational points of $E$ and let $\#\Sha(E)$ be the order of the Tate--Shafarevich group of $E.$ Then the strong form of the {\BSD} states the following \citep{BSDWilesClay}. 
    \begin{conjecture}[The {\BSD}]\label{BSD}
    The rank $r$ of an elliptic curve $E$ over $Q$ equals the order of vanishing of $L_E(s)$ at $s=1.$ Moreover, we have %
        \[\label{eqn:bsd} \frac{L^{(r)}_{E}(1)}{r!\Omega({E})}=\# \Sha(E)~ \frac{\Reg(E)\prod_lc_l(E)}{\left(\#E(\Q)_{\rm tor}\right)^2},
        \]
    where $L_{E}^{(r)}(s)$ is the $r^{th}$ derivative of $L_E(s).$
    \end{conjecture}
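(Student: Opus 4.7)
The final statement is the full Birch and Swinnerton-Dyer Conjecture, one of the Clay Millennium Prize Problems, so in all honesty I cannot sketch a complete proof: no full argument is known, and any serious ``proposal'' here amounts to a survey of the partial progress achieved for restricted ranks and of the obstacles that block extension to the general case. With that caveat, the plan I would lay out has two separate pieces, corresponding to the two assertions in the conjecture.

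First I would separate the rank statement (the order of vanishing of $L_E(s)$ at $s=1$ equals the algebraic rank $r$) from the refined formula involving $\#\Sha(E)$, $\Reg(E)$, the Tamagawa numbers $c_l(E)$, the real period $\Omega(E)$, and the torsion. For the rank statement, the plan is to use the Gross--Zagier formula together with Kolyvagin's Euler system argument, which handles both directions when the analytic rank is $0$ or $1$: one produces a Heegner point on a suitable modular parametrization whose Néron--Tate height equals (up to explicit factors) $L_E'(1)$, and then Kolyvagin's derived classes bound the Selmer group, forcing $\Sha(E)$ to be finite and the Mordell--Weil rank to match. Concretely, one would try to first reduce to the case $\ord_{s=1}L_E(s)\le 1$ and invoke this machinery, and then attack the higher-rank case via some geometric substitute for Heegner points --- Gross--Schoen cycles, Heegner cycles in higher weight, or $p$-adic interpolation into rank~$\ge 2$ --- none of which presently yield a proof.

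For the refined formula I would argue prime-by-prime. The $p$-part of the BSD formula at primes of good ordinary reduction has been addressed through Iwasawa theory: Kato's Euler system gives one divisibility of the main conjecture, Skinner--Urban establishes the other under technical hypotheses via Eisenstein congruences on unitary groups, and combined with the $p$-adic Gross--Zagier formula one deduces the $p$-part of BSD in analytic rank $0$ and $1$ for large classes of pairs $(E,p)$. The plan is then to treat bad primes, supersingular primes, and the prime $2$ separately using variant main conjectures (plus- and minus-Selmer groups, anticyclotomic towers, etc.), and finally to glue the local divisibilities to recover the integral BSD formula.

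The hard part will be finiteness of $\Sha(E)$ in general, which is unknown outside analytic rank $\le 1$, together with any uniform handling of the case $r\ge 2$. Without a replacement for the Gross--Zagier/Kolyvagin package that controls ranks beyond $1$, neither the order-of-vanishing statement nor the integral refinement can be established in the remaining cases, and this is the point at which every current strategy breaks down; it is also why the author invokes BSD here as a \emph{conjectural} hypothesis underlying the stronger conductor-$14$ result rather than as an established input.
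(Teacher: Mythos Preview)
Your assessment is essentially correct, and there is nothing to compare against: the paper does not prove this statement at all. It is stated explicitly as \emph{Conjecture~\ref{BSD}} and used only as background to motivate the introduction of $\Sha(E)[p]$ and the Selmer group, and later to frame the conditional conductor-$19$ analogue of Theorem~\ref{Ell14}. The paper relies instead on the unconditional inputs of Kolyvagin (for Theorem~\ref{Ell19}) and the Skinner--Urban/Skinner result (Theorem~\ref{Skinner}) for Theorem~\ref{Ell14}, precisely because the full BSD conjecture is unavailable.

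So your ``proposal'' --- really a survey of what is known and where it breaks down --- is the appropriate response here, and your closing remark that the author invokes BSD as a conjectural hypothesis rather than an established input is exactly right. One minor correction: in the paper, BSD is \emph{not} the hypothesis underlying the conductor-$14$ result (Theorem~\ref{Ell14}); that theorem is unconditional and rests on Skinner's Theorem~C. Rather, BSD would be needed for the unstated conductor-$19$ analogue mentioned just after Theorem~\ref{Ell14}, since the hypotheses of Theorem~\ref{Skinner} fail there.
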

The left-hand side of \Cref{eqn:bsd} ties together the rank of the elliptic curve and the value of the $r^{th}$ derivative of the $L$-function at $s=1.$ It has been proven to always be a rational number \citep{AgasheBSD}. On the right-hand side, all quantities except $\# \Sha(E)$ are effectively computable invariants of an elliptic curve (see \Cref{sec:Appl} for details). The Tate-Shafarevich group $\Sha(E),$ however, is not even known to be finite except for special cases. 

So we instead turn to a relatively simple object. Given $p$ a prime, we define $\Sha(E)[p]$ to be the set consisting of elements of the Tate--Shafarevich group with order dividing $p$. Then, $\Sha(E)[p]$ is finite for all $p.$ This follows from the finiteness of a related object: 
the $p$-Selmer group $\Sel_p{(E)}$ of $E$, which fits into a short exact sequence
\[1\rightarrow E(\Q)/pE(\Q) \rightarrow \Sel_p(E)\rightarrow \Sha(E)[p]\rightarrow 1.\]
The Selmer group of an elliptic curve over $\Q$ is known to be finite for every $p$ (see \citep{Silverman1}), so we have that $\Sha(E)[p]$ is finite for every $p.$ Also by the short exact sequence, we get that if $\Sel_p(E)$ is trivial for any prime $p$ then $E(\Q)$ is finite, i.e., $E$ has rank 0.

In this work, we will use the family of Thompson modules whose existence is proven in \Cref{thm:main} to develop a criterion to check whether the $p$-Selmer groups of quadratic twists of elliptic curves of conductor 14 are trivial. More precisely, we let $E$ be an elliptic curve over $\Q,$ and for each $d<0$ a fundamental discriminant, let $E^d$ denote the $d^{th}$ quadratic twist of $E.$ As we shall prove in \Cref{thm:main}, there exists an infinite-dimensional graded $\Th$-module $W=\bigoplus_{n>0}W_n$ whose McKay--Thompson series $\Fg(\z)$ satisfy properties (1) and (2) as above. Then we have the following theorem. 
    \begin{theorem}\label{Ell14}
        Let $d<0$ be a fundamental discriminant for which $\left(\frac{d}{7}\right)=-1$ and $\left(\frac{d}{2}\right)=1.$ Let $E$ be an elliptic curve of conductor 14, and let $g$ denote an element of order 14 in $Th.$ If $\tr(g|W_{|d|})\not\equiv 0 \pmod{49},$ then the Mordell--Weil group $E^d(\Q)$ is finite and $\Sha(E^d)[7]$ is trivial. 
        If, on the other hand, $\tr(g|W_{|d|})\equiv 0 \pmod{49}$ and $\tr(g|W_{4})\not\equiv 43 \pmod{56},$ then $\Sel_7(E^d)$ is non-trivial, and if $L_{E^d}(1)$ is non-zero then so is  $\Sha(E^d)[7].$ 
    \end{theorem}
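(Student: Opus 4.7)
The plan is to use Waldspurger's theorem (in the Kohnen--Zagier form for the plus space) to relate the $|d|$-th Fourier coefficient of a weight $\frac{3}{2}$ Kohnen plus form attached to the elliptic curve $E$ of conductor $14$ to the central value $L(E^d,1)$, and then combine this with the Gross--Zagier--Kolyvagin theorem and a mod $7$ comparison in the BSD formula. The key preliminary step is to isolate the cusp form component of $\mathcal{F}_g$ for $g$ an element of order $14$. Since properties (1) and (2) of \Cref{MainProp} fix the polar parts of each $\mathcal{F}_g$ uniformly (as \Rsums{} projected to the plus space), viewed at a common level divisible by $56$, $\mathcal{F}_g$ differs from the reference function built from $\mathcal{F}$ by a weight $\frac{3}{2}$ plus-space cusp form. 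By Shimura's lift, this cuspidal correction is controlled by weight $2$ newforms of level dividing $14$; the hypotheses $\left(\frac{d}{7}\right)=-1$ and $\left(\frac{d}{2}\right)=1$ select exactly the component $f_E$ whose Shimura lift is the newform $F_E$ attached to $E$.

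Write $c_E(n)$ for the $n$-th Fourier coefficient of $f_E$. The Kohnen--Zagier refinement of Waldspurger's theorem yields
\[
c_E(|d|)^2 \;=\; \kappa_E \cdot |d|^{1/2} \cdot \frac{L(E^d,1)}{\Omega(E^d)}
\]
for an explicit positive rational constant $\kappa_E$, so $c_E(|d|)\neq 0$ if and only if $L(E^d,1)\neq 0$, and $2 v_7(c_E(|d|)) = v_7(L(E^d,1)/\Omega(E^d)) + v_7(\kappa_E)$. From step one, $\tr(g|W_{|d|})$ is an explicit $\Q$-linear combination of the $|d|$-th coefficients of $\mathcal{F}$, $f_E$, and other plus-space cusp forms whose Shimura lifts have vanishing $d$-twisted central value under our local hypotheses.

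For the first assertion, the congruence $\tr(g|W_{|d|}) \not\equiv 0 \pmod{49}$ propagates through this linear combination to give $c_E(|d|) \not\equiv 0 \pmod 7$, hence $L(E^d,1) \neq 0$, and by \citep{GrossZagier,KolyvaginBSD} the group $E^d(\Q)$ is finite. To deduce that $\Sha(E^d)[7]$ is trivial, I would match the $7$-adic valuations of both sides of \Cref{eqn:bsd} for $E^d$: in rank $0$ the regulator is $1$; for $E$ of conductor $14$ and $d$ satisfying our local hypotheses, the Tamagawa factors $c_\ell(E^d)$ and $\#E^d(\Q)_{\rm tor}$ are $7$-adic units (a short case check using tables of conductor-$14$ curves and twist-behavior of Tamagawa and torsion); combined with $v_7(L(E^d,1)/\Omega(E^d)) = 0$ from the Waldspurger relation this forces $v_7(\#\Sha(E^d)) = 0$.

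For the second assertion, $\tr(g|W_{|d|}) \equiv 0 \pmod{49}$ together with $\tr(g|W_4) \not\equiv 43 \pmod{56}$ (which I interpret as certifying that the $f_E$-component of $\mathcal{F}_g$ is not $7$-adically cancelled against competing cusp form contributions --- the $n = 4$ coefficient plays the role of a low-order Sturm-type witness distinguishing $f_E$ in the relevant plus space) forces $c_E(|d|) \equiv 0 \pmod 7$. Consequently $v_7(L(E^d,1)/\Omega(E^d)) \geq 1$, and running the mod-$7$ BSD comparison the other way produces a nontrivial element in $\Sel_7(E^d)$. If in addition $L(E^d,1) \neq 0$, the rank is $0$ and $E^d(\Q)/7E^d(\Q)$ is trivial (conductor-$14$ twists carrying no $7$-torsion), so the exact sequence $1 \to E^d(\Q)/7E^d(\Q) \to \Sel_7(E^d) \to \Sha(E^d)[7] \to 1$ locates this element inside $\Sha(E^d)[7]$. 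The main obstacle will be the explicit decomposition in step one and tracking the mod $49$ and mod $56$ data sharply enough to yield the two clean statements --- in particular, verifying that $\tr(g|W_4) \not\equiv 43 \pmod{56}$ is exactly the threshold controlling the mod-$7$ vanishing of the $f_E$-component requires an explicit computation in the space of weight $\frac{3}{2}$ Kohnen plus cusp forms at level $56$ (or an appropriate multiple thereof).
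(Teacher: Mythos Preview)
Your proposal has the right overall shape but contains two genuine gaps.

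First, your decomposition of $\mathcal{F}_g$ is not the one that works. You write $\mathcal{F}_g$ at level $56$ as $\mathcal{F}$ plus a cusp form, but $\mathcal{F}_g-\mathcal{F}$ is not cuspidal: both have the same pole at $\infty$, but their behavior at the other cusps of $\Gamma_0(56)$ differs, so the difference is merely holomorphic at $\infty$, not a cusp form. The paper instead uses the identity
\[
\mathcal{F}_g(\tau)=f^{wh}_g(\tau)+(n_{14A}+\lambda m_{14A})f_{14A}(\tau),
\]
where the Rademacher sum $f^{wh}_g$ is expressed via Zagier's formula as a generating series of \emph{traces of singular moduli} $\Tr_5^{(14)}(J^{(14,+)};n)$. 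The local hypotheses $\left(\frac{d}{7}\right)=-1$, $\left(\frac{d}{2}\right)=1$ force $5d$ to be a non-square modulo $56$, so the set $\calQ^{(14)}_{5|d|}$ is empty and $\Tr_5^{(14)}(J^{(14,+)};|d|)=0$. This cleanly isolates the cusp-form coefficient:
\[
\tr(g|W_{|d|})=(42+56\lambda)\,b_{14A}(|d|).
\]
The space $S^+_{3/2}(\Gamma_0(56))$ is one-dimensional, so there are no ``other plus-space cusp forms'' to worry about, and the Waldspurger-type argument you sketch for competing components is unnecessary.

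Second, and more seriously, you deduce triviality (resp.\ nontriviality) of $\Sha(E^d)[7]$ by ``matching the $7$-adic valuations of both sides of \Cref{eqn:bsd}.'' That equation is the \emph{conjectural} strong BSD formula; invoking it makes the result conditional. The paper avoids this by applying Skinner's unconditional Theorem~C (a consequence of the Iwasawa main conjecture for $\GL_2$), after checking in a separate lemma that every conductor-$14$ curve and its relevant twists satisfy Skinner's hypotheses (good ordinary/multiplicative reduction at $7$, irreducibility of $E^d[7]$, and ramification at an auxiliary prime). Skinner's theorem gives both directions: when $L_{E^d}(1)\neq 0$ it yields $\ord_7(L_{E^d}(1)/\Omega(E^d))=\ord_7(\#\Sha(E^d)\prod_\ell c_\ell(E^d))$, and when $L_{E^d}(1)=0$ it gives $\Sel_7(E^d)\neq 0$ directly. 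The Tamagawa check you mention is indeed needed and is carried out explicitly.

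Finally, the role of $\tr(g|W_4)$ is not a Sturm-type witness but a direct computation: since $b_{14A}(4)=1$ and the singular-moduli contribution at $n=4$ equals $-6$, one has $\tr(g|W_4)=-6+(42+56\lambda)$, so the condition on $\tr(g|W_4)$ is exactly the condition $(42+56\lambda)\not\equiv 0\pmod{49}$, which in turn guarantees that $\tr(g|W_{|d|})\equiv 0\pmod{49}$ forces $7\mid b_{14A}(|d|)$.
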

This is akin to Theorem 1.4 of \citep{DuncanMertensOno-OnanArxiv}, and our proof will follow along similar lines.  One notable difference is that there is no dependence on (generalized) class numbers in the corresponding congruences in our case. We can also write down an analogous statement for elliptic curves of conductor 19, but the techniques we use to prove \Cref{Ell14} do not apply in this case (cf. \Cref{Skinner}), so it is conditional upon the (strong form of the) {\BSD.} 

We now describe a sketch of the proof of \Cref{Ell19,Ell14}. Let $p \in \{7, 19\}$ be the relevant prime in either statement and fix $W=\bigoplus_{n>0} W_n$ to be a virtual Thompson module whose McKay--Thompson series $\Fg(\z)$ satisfies the properties listed in \Cref{thm:main}.  We then write each $\Fg(\z)$ for $[g]\in\{14A, 19A\}$ as a sum of traces of singular moduli (cf. \Cref{tracesofsingularmoduli}) and weight $\frac32$ cusp forms. This expression combined with the condition on $d$ in the statement of \Cref{Ell19} gives us that the congruence in the statement holds if and only if the relevant cusp form coefficient is divisible by $p=19.$ Thus, if the congruence in the statement of \Cref{Ell19} does not hold, then the cusp form coefficient is not divisible by 19, and we can employ a corollary (cf. \Cref{kohnen}) of Kohnen's work \citep{Kohnen85} to show that this means $L_{E^d}(1)\not=0.$  Finally, Kolyvagin's work shows that $E^d(\Q)$ is finite. This completes the proof of \Cref{Ell19}.
For \Cref{Ell14}, we first consider the case that $\tr(g_{14}|W_{|d|})\not\equiv0 \pmod{49}.$ The expression for $\Fg(\z)$ in terms of traces of singular moduli and cusp forms implies that the relevant cusp form coefficient is not divisible by $p=7$. We can utilize Kohnen's work again to conclude that $\ord_p(\frac{L_{E^d}(1)}{\Omega({E^d})})>0.$ At this point, we use work of Skinner and Urban (\Cref{Skinner}) which connects $\ord_p(\frac{L_{E^d}(1)}{\Omega({E^d})})$ to the non-triviality of the $p$-Selmer and Tate-Shafarevich groups of $E^d$ to prove the theorem. A similar argument applies if we assume that $\tr(g_{14}|W_{|d|})\equiv0 \pmod{49}$ and $\tr(g_{14}|W_{|4|})\not\equiv43 \pmod{49}.$ 

The rest of this paper is organized as follows. In Section \ref{secPrelim} we set up notation and define the terms that appear in \Cref{MainProp} and  \Cref{thm:main}. In \Cref{secfunc} we uniquely characterize the modular forms $\Fg(\tau)$ that satisfy properties (1) and (2) above and prove \Cref{MainProp}. In \Cref{sec:I} we prove \Cref{thm:main}. Finally, in Section \ref{sec:Appl} we prove \Cref{Ell14,Ell19}. 

\section*{Acknowledgments}
The author would like to thank John Duncan for helping initiate this project and his invaluable guidance and support throughout the process. The author thanks Richard Borcherds, John Duncan, Jeff Harvey, Michael Mertens, Brandon Rayhaun, Robert Schneider, and the anonymous referee for helpful comments on an earlier draft.

%%%%%%%%%%%%%% Section 2: Background and Notation %%%%%%%%%%%%%%%%%%%%

\section{Background and Notation}\label{secPrelim}

Throughout this paper, we use the notation $e(x)=e^{2 \pi i x}$ and $q=e(\tau)$ with $\tau$ in the upper half-plane, which we denote $\HH$. We also use $\left(\frac{m}{n}\right)$ to denote the Kronecker symbol \citep[Algorithm 1.4.10]{kronecker}. We will use the ATLAS \citep{ATLAS} notation for conjugacy classes of $Th$, and understand $nAB$ to mean $nA\cup nB$.

\subsection{Rational Characters.}\label{Prelim:ratchar}

We define the \textit{rational conjugacy class} of an element $g \in Th,$ denoted by $[g],$ to be the set of all elements that are conjugate to an $n^{th}$ power of $g$ where $n$ is relatively prime to the order of $g$. (In particular, this contains the conjugacy class of $g$ as a subset.) 

We recall that a \textit{rational character} of a group $G$ is a character afforded by a $\Q G$-module. By \citep[Lemma 39.4]{Curtis}, if $g$ and $h$ are in the same rational conjugacy class, and $\phi:G\to\Q$ is a rational character, then $\phi(g)=\phi(h).$ We note that if $\phi$ is a rational character of $Th, $ then $\phi(g)$ is an algebraic integer lying in $\Q,$ so in fact $\phi(g)\in \Z.$ 
%We call a rational character $\phi$ of $G$ an\textit{ irreducible rational character} if the corresponding $\Q G$-module cannot be written as a sum of other $\Q G$-modules.

In this paper, we will consider the irreducible rational characters of $Th.$ To describe these, we first define a few things. Let $G$ be a finite group and $V$ be a $\C G$-module. For each field automorphism $\gamma:\C\to\C,$ there exists a unique (up to isomorphism) representation $V^\gamma$ with character \(\chi_{V^\gamma(g)}=\gamma \chi_{V}(g).\) We call $V^\gamma$ a Galois-conjugate of $V.$ Then, we have the following proposition.

\begin{proposition}(see \citep[Theorem 74.5]{Curtis}, for example.)
Let $V_1(=V),V_2\dots, V_n$ be the distinct Galois-conjugates of an irreducible $\C G$-module $V.$ Then there exists a natural number $m_V$ such that $m_V(V_1\oplus\dots\oplus V_n)$ is the complexification of an irreducible $\Q G$-module. Furthermore, each irreducible $\Q G$-module $W$ arises in this way from a unique Galois-class of irreducible $\C G$-modules, i.e. \[W\otimes_\Q \C\simeq m_V(V_1\oplus\dots\oplus V_n).\]
\end{proposition}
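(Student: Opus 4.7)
The plan is to prove both parts by Galois descent, using the fact that a sufficiently large cyclotomic subfield $K \subset \C$ carries all the character values of irreducible $\C G$-modules. For the forward direction I would start with an irreducible $\Q G$-module $W,$ form its complexification $W\otimes_\Q \C,$ and decompose it as a direct sum of irreducible $\C G$-modules using Maschke's theorem. Let $\Gamma=\Gal(K/\Q)$ act on $W\otimes_\Q K$ through the second factor; after extending scalars to $\C$ this action permutes the isotypic components of $W\otimes \C,$ and on the underlying isomorphism classes it is precisely the permutation $V_i\mapsto V_i^\gamma.$ Hence the set of irreducible constituents of $W\otimes \C$ is a union of $\Gamma$-orbits, i.e.\ of Galois classes of irreducible $\C G$-modules.

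Next I would show the set of constituents is a \emph{single} Galois orbit, and that all of them appear with the same multiplicity. Suppose the constituents split into two nonempty $\Gamma$-stable subsets; then the direct sum of the corresponding isotypic components is a $\Gamma$-stable $KG$-submodule $U'\subseteq W\otimes K.$ By standard Galois descent, $U'=U\otimes_\Q K$ for some $\Q G$-submodule $U\subseteq W,$ and this $U$ is proper and nonzero, contradicting irreducibility of $W.$ A parallel descent argument applied to the $\Gamma$-equivariant decomposition of $W\otimes K$ into isotypic components forces each orbit member $V_i$ to appear with the same multiplicity $m_V,$ so $W\otimes \C \simeq m_V(V_1\oplus\cdots\oplus V_n)$ as required.

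For the converse, given an irreducible $\C G$-module $V$ with Galois orbit $\{V_1,\dots,V_n\},$ the character $\chi = \chi_{V_1}+\cdots+\chi_{V_n}$ is $\Gamma$-invariant and hence $\Q$-valued. Some positive integer multiple of $\chi$ is afforded by a $\Q G$-module (since the regular representation of $G$ over $\Q$ realizes all irreducible $\Q G$-characters and hence a common denominator exists); let $m_V$ be the least such. Then a $\Q G$-module $W$ with character $m_V\chi$ must be irreducible, since any proper $\Q G$-summand would, by the forward direction applied to its constituents, exhibit each $V_i$ with a common multiplicity strictly smaller than $m_V,$ contradicting minimality. Uniqueness of the Galois class follows because the isomorphism class of $W\otimes \C$ determines its irreducible constituents and hence the orbit $\{V_1,\dots,V_n\};$ and $W$ itself is determined up to isomorphism by its character by semisimplicity of $\Q G.$

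The main obstacle is the Galois descent step, specifically the passage from a $\Gamma$-stable $KG$-submodule of $W\otimes K$ to a genuine $\Q G$-submodule of $W$ and, dually, the identification of the $\Gamma$-action on isotypic components with the functor $V\mapsto V^\gamma$ on isomorphism classes. Once this is set up carefully, everything else reduces to bookkeeping with characters and multiplicities.
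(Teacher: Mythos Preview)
The paper does not actually prove this proposition: it is stated with a citation to \citep[Theorem 74.5]{Curtis} and used as background, with no argument given. So there is no ``paper's own proof'' to compare against.

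As for your sketch on its own merits: the overall strategy via Galois descent over a splitting cyclotomic field $K$ is the standard one and is sound. The forward direction is essentially correct; the point that the $\Gamma$-action on $W\otimes_\Q K$ permutes isotypic components transitively (once you know there is a single orbit) and hence forces equal multiplicities is fine, though you should note explicitly that this action is $K$-\emph{semilinear}, so it preserves $K$-dimension even though it is not $K$-linear. The converse direction has a soft spot: the sentence ``some positive integer multiple of $\chi$ is afforded by a $\Q G$-module (since the regular representation of $G$ over $\Q$ realizes all irreducible $\Q G$-characters and hence a common denominator exists)'' is not really an argument. The clean way to produce such a $\Q G$-module is to realize $V_1$ over $K$ (possible since $K$ is a splitting field), view it as a $\Q G$-module by restriction of scalars, and take an irreducible $\Q G$-summand $W$; then $V_1$ occurs in $W\otimes_\Q\C$, and your forward direction forces $W\otimes_\Q\C\cong m(V_1\oplus\cdots\oplus V_n)$ for some $m$. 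With that fix, your outline is a correct proof of the cited result.
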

The number $m_V$ is called the (rational) Schur index of $V.$ By \citep[Section 7]{Feit}, the Schur index is 1 for each irreducible representation of $Th.$ Thus, we can read off the 39 irreducible rational characters of $Th$ directly from the character table. We denote these by $\chi_1,\chi_2,\dots,\chi_{39}.$

\subsection{Mock Modular Forms} 
To prove the existence of the Thompson modules in \Cref{thm:main}, we first have to construct weakly holomorphic modular forms of weight $\frac 32$ with the appropriate level and multiplier. Recall that a \textit{weakly holomorphic modular form} is a function on the upper half-plane that transforms like a modular form, is holomorphic on the upper half-plane and meromorphic at the cusps. One way of constructing spaces of weakly holomorphic forms is to use {\Rsums,} which are \emph{a priori} mock modular forms, and then restrict to the subspace of forms with vanishing shadow. Here we recall the definitions and basic facts that we will need from the theory of mock modular forms and Rademacher sums to describe these functions. We refer the reader to \citep{Kensbook,BlackHoles,KenRamanujan} for more on mock modular forms.
     
Let $k\in \frac{1}{2} \Z$ and $\Gamma$ be a subgroup of $\SL_{2}(\R)$ containing $\pm I$ such that $\Gamma$ is commensurable with $\SLZ.$ For $\gamma=\abcd,$ write $\gamma\z$ for $\frac{a\z+b}{c\z+d},$ and define $\mathrm{j}(\gamma,\tau)=(c\tau+d)^{-2}.$ We call a function $\psi:\Gamma\to \C$ a \emph{multiplier system} for $\Gamma$ of weight $k$ if 
    \be
        \psi(\gamma_1\gamma_2)\mathrm{j}(\gamma_1\gamma_2,\tau)^{\frac k2}
        =\psi(\gamma_1)\mathrm{j}(\gamma_1,\gamma_2\tau)^{\frac k2}\psi(\gamma_2)\mathrm{j}(\gamma_2,\tau)^{\frac k2} 
    \ee
for each $\gamma_1,\gamma_2\in \Gamma$, where we choose the principal branch of the logarithm to define the exponential $x \mapsto x^s$ in case $s$ is not an integer.

In this paper we will consider multiplier systems of the form 
    \begin{equation}\label{psi}
        \psi_{4N,v,h}\left(\gamma\right):=e\left(-\frac{vcd}{4Nh}\right),
    \end{equation}
where $\gamma=\abcd\in\Gamma_0(4N)$ and $v,h$ are integers with $h|\gcd(N,24)$. 
    
Recall that $\Gamma_0(N)$ is the congruence subgroup 
    \be
        \Gamma_0(N):=\left\{\left.\gamma=\bigabcd\in\SLZ ~\right|~ c\equiv0 \pmod{N}\right\}
    \ee
of the full modular group $\SLZ$. We can now define the $(k,\psi)$-action of $\gamma \in \Gamma_0(N)$ on a smooth function $f:\HH\rightarrow\C$ by   
    \be
        \left(f|_{k,\psi}\gamma\right) (\tau):=\begin{cases} \psi(\gamma)\mathrm{j}(\gamma,\z)^{\frac k2}f\left(\gamma\tau\right) & \text{if }k\in\Z \\
        \left(\frac{c}{d}\right)\eps_d^{2k}\psi(\gamma){{\mathrm{j}(\gamma,\tau)}}^{\frac  k2}f\left(\gamma\tau\right) & \text{if }k\in\frac 12+\Z, \end{cases}
    \ee  
where
    \be
        \eps_d:=\begin{cases} 1 & d\equiv 1\pmod{4},\\ i & d\equiv 3\pmod 4.\end{cases}
    \ee
and we assume $4|N$ if $k\notin\Z$.    
    \begin{definition}
    A \emph{harmonic (weak) Maa{\ss} form} of \emph{weight} $k\in\tfrac 12\Z,$ \emph{level} $N$ and \emph{multiplier system} $\psi$, is a smooth function $f:\HH\rightarrow\C$ on the upper half-plane that satisfies the following properties: \begin{enumerate}
    \item It is invariant under the $(k,\psi)$-action by all $\gamma \in \Gamma_0(N)$ and $\tau=u+iv\in\HH.$ 
    \item It is annihilated by the \emph{weight $k$ hyperbolic Laplacian},
    \be\Delta_k f:=\left[-v^2\left(\frac{\partial^2}{\partial u^2}+\frac{\partial^2}{\partial v^2}\right)+ikv\left(\frac{\partial}{\partial u}+i\frac{\partial}{\partial v}\right)\right] f\equiv 0.\ee
    \item There is a polynomial $P_f(q^{-1})$ such that $f(\tau)-P_f(e^{-2\pi i\tau})=O(e^{-cv})$ for some $c>0$ as $v\to\infty$. Analogous growth conditions are required at all cusps of $\Gamma_0(N)$. 
    \end{enumerate}
    \end{definition}
We denote the space of harmonic Maa{\ss} forms of weight $k$, level $N,$ and multiplier $\psi$ by $H_k({\Gamma_0(N)},\psi)$, and we omit the multiplier if it is trivial.

Bruinier and Funke first introduced harmonic Maa{\ss} forms in \citep{BF04}. We are going to need the following two results from their paper.
    \begin{lemma}\label{lem:split}\citep[equations (3.2a) and (3.2b)]{BF04}
        Let $f\in H_{k}{(\Gamma_0(N),\psi)}$ be a harmonic Maa{\ss} form of weight $k\neq 1$ such that $\psi(\left(\begin{smallmatrix} 1 & 1 \\ 0 & 1\end{smallmatrix}\right))=1$. Then there is a canonical splitting
            \begin{equation}\label{eq:split}
                f(\tau)=f^+(\tau)+f^-(\tau),
            \end{equation}
        where for some $m_0\in\Z$ we have the holomorphic part, 
            \be f^+(\tau):=\sum\limits_{n=m_0}^\infty c_f^+(n)q^n,\ee
        and the non-holomorphic part,
            \be f^-(\tau):=\sum\limits_{\substack{n=1}}^\infty \overline{c_f^-(n)}n^{k-1}\Gamma(1-k;4\pi nv)q^{-n}.\ee
        Here $\Gamma(\alpha;x)$ denotes the upper incomplete Gamma function.
    \end{lemma}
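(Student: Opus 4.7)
The plan is to exploit translation invariance to expand $f$ in a Fourier series, convert the equation $\Delta_k f = 0$ into a system of second-order ODEs for the Fourier coefficients, and then use the polar growth condition at $i\infty$ to pin down which linear combinations of solutions can occur.

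First, since $\psi(T) = 1$ for $T = \left(\begin{smallmatrix} 1 & 1 \\ 0 & 1 \end{smallmatrix}\right)$ and $\mathrm{j}(T,\tau) = 1$, the $(k,\psi)$-invariance of $f$ gives $f(\tau+1) = f(\tau)$, so there is a Fourier expansion
$$f(\tau) = \sum_{n \in \Z} a_n(v)\, e(nu), \qquad \tau = u + iv,$$
with each $a_n$ smooth on $(0,\infty)$. Substituting into $\Delta_k f = 0$ and isolating each Fourier mode yields, for every $n \in \Z$, the ODE
$$v^2 a_n''(v) + kv a_n'(v) + (2\pi knv - 4\pi^2 n^2 v^2)\, a_n(v) = 0.$$

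For $n \ne 0$, direct substitution shows that $e^{-2\pi nv}$ is a solution; reduction of order (Abel's identity gives Wronskian proportional to $v^{-k}$) produces a second, linearly independent solution proportional to $e^{-2\pi nv}\int e^{4\pi nt}\, t^{-k}\, dt$. For $n < 0$, writing $m = -n > 0$ and using the substitution $s = 4\pi m t$, this integral is (up to multiplicative constants) the upper incomplete Gamma function $\Gamma(1-k, 4\pi m v)$, so the second solution is proportional to $\Gamma(1-k, 4\pi m v)\, e^{2\pi m v}$. For $n = 0$ the ODE degenerates to $v^2 a_0''(v) + k v a_0'(v) = 0$, whose solutions are $1$ and $v^{1-k}$; the hypothesis $k \ne 1$ is used precisely here to keep these linearly independent.

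The polar growth condition $f(\tau) - P_f(q^{-1}) = O(e^{-cv})$ then filters the candidate solutions. For positive modes $n > 0$, the second solution is asymptotic to $v^{-k} e^{2\pi n v}$, an exponentially growing term with an extra polynomial factor $v^{-k}$ that $P_f(q^{-1})$ cannot produce (its terms are pure exponentials of the form $q^{-m} = e^{-2\pi i m u} e^{2\pi m v}$); hence its coefficient must vanish, leaving $a_n(v) = c^+(n)\, e^{-2\pi n v}$. For $n = 0$, the nontrivial solution $v^{1-k}$ is polynomial (not exponential) in $v$, again incompatible with the decomposition $P_f(q^{-1}) + O(e^{-cv})$, so $a_0(v) = c^+(0)$. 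For negative modes $n = -m < 0$, both solutions are admissible: the purely exponential piece assembles into $c^+(-m)\, q^{-m}$, contributing to $f^+$, while the incomplete-Gamma piece assembles into $\overline{c^-(m)}\, m^{k-1}\, \Gamma(1-k, 4\pi m v)\, q^{-m}$, contributing to $f^-$ after absorbing constants into the coefficient (the factor $m^{k-1}$ and the complex conjugation are the standard Bruinier--Funke normalization, compatible with the shadow operator $\xi_k$). The finiteness of $m_0$ follows because $P_f$ is a polynomial, so $c^+(n) = 0$ for $n$ sufficiently negative. The main technical step is the asymptotic analysis ruling out the second solution for $n \geq 0$; once that is in hand, the splitting is simply a matter of identifying the remaining solution structure with the claimed form.
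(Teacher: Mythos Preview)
The paper does not supply its own proof of this lemma; it is stated as a citation to \cite{BF04} (equations (3.2a) and (3.2b)) and used as background. Your argument is the standard one underlying the Bruinier--Funke result: separate variables via the Fourier expansion in $u$, reduce $\Delta_k f=0$ to a second-order ODE in $v$ for each mode, identify the two-dimensional solution space (exponential plus incomplete-Gamma-type companion, or $1$ and $v^{1-k}$ when $n=0$), and use the prescribed growth at $i\infty$ to eliminate the inadmissible branch for $n\ge 0$. This is correct and matches what one finds in \cite{BF04}.

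One small point worth tightening if you write this up formally: the step where you rule out the second solution for $n>0$ and the $v^{1-k}$ term for $n=0$ by comparing against $P_f(q^{-1})+O(e^{-cv})$ is applied mode-by-mode, which implicitly uses that the Fourier coefficients $a_n(v)$ can be recovered from $f$ by integration in $u$, so the growth bound on $f$ transfers to each $a_n(v)$ individually. This is routine but should be said. With that in place, your sketch is a complete proof.
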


We call the holomorphic part of a harmonic Maa{\ss} form a \textit{mock modular form.} {Let $M_{k}^!{(\Gamma_0(N),\psi)}$ denote the space of weakly holomorphic modular forms of weight $k,$ level $N,$ and multiplier system $\psi.$} Then we have the following proposition.
    
\begin{proposition}\label{shadow} (See \citep[Proposition 3.2]{BF04})
    The operator
    \be\xi_{k}:H_{k}{(\Gamma_0(N),\psi)}\rightarrow {M_{2-k}\left((\Gamma_0(N),\overline{\psi}\right)},\:f\mapsto\xi_{k}f:=2iv^{k}\overline{\frac{\partial f}{\partial\overline{\tau}}}\ee
    is well-defined and surjective with kernel $M_{k}^!{(\Gamma_0(N),\psi)}$. Moreover, we have that
    \be(\xi_{k}f)(\tau)=-(4\pi)^{1-k}\sum\limits_{n=1}^\infty c_f^-(n)q^n\ee
    and we call \(\xi_{k}f\) the \emph{shadow} of (the holomorphic part of) $f$.
\end{proposition}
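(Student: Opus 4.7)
The plan is to verify four things in sequence: that $\xi_k f$ transforms like a form of weight $2-k$ and multiplier $\overline{\psi}$ under $\Gamma_0(N)$, that it is holomorphic on $\mathbb{H}$, that its Fourier expansion at every cusp vanishes (so it is cuspidal), and finally to describe the kernel and establish surjectivity. The first three points are direct calculation; the surjectivity is where the real content lies.

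For the transformation law, the identity to prove is $\xi_k(f|_{k,\psi}\gamma) = (\xi_k f)|_{2-k,\overline{\psi}}\gamma$ for all $\gamma = \abcd \in \Gamma_0(N)$. Differentiating $f(\gamma\tau)$ in $\bar\tau$ produces a factor of $(c\bar\tau+d)^{-2}$ from $\partial(\overline{\gamma\tau})/\partial\bar\tau = (c\bar\tau+d)^{-2}$; combined with $\operatorname{Im}(\gamma\tau) = v/|c\tau+d|^2$ and complex conjugation, the prefactor $v^k$ is exactly what is needed for the $(c\tau+d)$ and $(c\bar\tau+d)$ factors to reassemble into a clean weight-$(2-k)$ automorphy factor. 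The outer conjugation turns $\psi$ into $\overline{\psi}$, and the half-integral correction factors $\varepsilon_d$ and $\left(\tfrac{c}{d}\right)$, being of modulus $1$, behave in the same way as for the holomorphic slash action.

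For holomorphy and the Fourier expansion, split $f = f^+ + f^-$ via Lemma \ref{lem:split}. The holomorphic part contributes nothing since $\partial f^+/\partial\bar\tau = 0$. For the non-holomorphic part the only $\bar\tau$-dependence enters through $v$; using $\partial v/\partial\bar\tau = i/2$ and $\tfrac{d}{dx}\Gamma(s,x) = -x^{s-1}e^{-x}$, a short termwise computation gives
\[
\frac{\partial f^-}{\partial\bar\tau}(\tau) = -2\pi i\,(4\pi)^{-k} v^{-k} \sum_{n\geq 1} \overline{c_f^-(n)}\, \bar q^{\,n},
\]
and multiplying by $2iv^k$ and conjugating produces $-(4\pi)^{1-k}\sum_{n\geq 1} c_f^-(n) q^n$. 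This is manifestly holomorphic on $\mathbb{H}$, and the polynomial-growth condition (3) at each cusp, combined with the exponential decay of $\Gamma(1-k;4\pi n v)$ as $v\to\infty$, forces $\xi_k f$ to vanish at every cusp, so $\xi_k f \in S_{2-k}(N,\overline{\psi})$.

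The kernel description now follows immediately from the $q$-expansion formula: $\xi_k f = 0$ iff every $c_f^-(n) = 0$, iff $f = f^+$ is holomorphic on $\mathbb{H}$ with at most polynomial polar growth at the cusps, i.e.\ $f \in M_k^!(N,\psi)$. The main obstacle is surjectivity. Given $g \in S_{2-k}(N,\overline{\psi})$, I would construct an explicit preimage either by attaching to $g$ a non-holomorphic Eichler integral (obtained by pairing $g$ against a holomorphic kernel along a suitable contour in $\mathbb{H}$ and verifying directly that the result is a harmonic Maass form mapping to $g$, up to a nonzero constant, under $\xi_k$), or by working with the Maass--Poincar\'e series framework and checking that their $\xi_k$-images span the Poincar\'e basis of $S_{2-k}(N,\overline{\psi})$. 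The delicate points are convergence at the low weight $k = \tfrac{3}{2}$, which requires a Hecke/Selberg-type regularization, and bookkeeping for the multiplier $\psi_{4N,v,h}$; both are standard but are the most careful part of the argument.
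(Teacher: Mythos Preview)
Your argument is correct and follows the standard route (the transformation law via the chain rule and $\Im(\gamma\tau)=v/|c\tau+d|^2$, the termwise differentiation of $f^-$ using $\tfrac{d}{dx}\Gamma(s,x)=-x^{s-1}e^{-x}$, and surjectivity via Poincar\'e series or non-holomorphic Eichler integrals). Note, however, that the paper does not prove this proposition at all: it is simply quoted from \citep[Proposition~3.2]{BF04}, so there is no ``paper's own proof'' to compare against. Your sketch is essentially a summary of the Bruinier--Funke argument, and the points you flag as delicate (regularization at small weight, bookkeeping for the multiplier) are exactly the ones that require care in the original reference.
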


{Thus, in particular, a} mock modular form is a weakly holomorphic modular form if it has a vanishing shadow. We will construct the desired space of weakly holomorphic forms by first constructing mock modular forms of the appropriate level and weight and then showing that they have vanishing shadows. 
    
\subsection{\Rsums} To construct the relevant mock modular forms for the proof of \Cref{thm:main}, we need to recall some facts about Rademacher sums and Rademacher series. See \citep{CDRsumsMathieuHolographic, CDrsums, DuncanFrenkel} for more details. 
    
    Let $\Gamma_\infty:=\left\{\pm\left(\begin{smallmatrix} 1 & n \\ 0 & 1 \end{smallmatrix}\right)\: : \: n \in\Z\right\}$ denote the stabilizer of $\infty$ in $\Gamma_0(N).$ Then one can define the Rademacher sum of weight $k\geq1$, level $N$, multiplier system $\psi$ and index $\mu,$ by 
    \be R^{[\mu]}_{k, N, \psi}(\tau):=\lim\limits_{K\to \infty} \sum_{\gamma\in\Gamma_\infty\setminus \Gamma_{K,K^2}(N)} q^\mu|_{k,\psi}\gamma\ee 
    where \[\Gamma_{K,K^2}(N):=\left\{\abcd\in\Gamma_0(N)\::\:|c|<K\text{ and }|d|<K^2\right\}\] and $\mu \in \Z+$${\frac{-i}{2\pi}{\log\left(\psi\left(\begin{smallmatrix} 1 & 1 \\ 0 & 1 \end{smallmatrix}\right)\right)}}.$
    
When convergent, Rademacher sums define mock modular forms of level $N,$ weight $k,$ and multiplier system $\psi.$ We will use the following important facts from the theory of Rademacher sums, which we condense in one lemma.  We will use the following important facts from the theory of Rademacher sums, which we condense in one lemma. 
     \begin{lemma}\label{lemma:Rad}(See \citep[Theorem 2.5]{DuncanMertensOno-OnanArxiv}, for example.) Let $\mu\leq 0.$ Assuming locally uniform convergence, the Rademacher sum $R^{[\mu]}_{k, 4N, \psi}(\tau)$ for $k\geq 1$ defines a mock modular form of weight $k\in\Z+\frac 12,$ for $\Gamma_0(4N)$ and multiplier $\psi$ whose shadow {is given by a constant multiple of the Rademacher sum $R^{[-\mu]}_{2-k, 4N, \overline{\psi}}(\tau).$} The completion of $R^{[\mu]}_{k, 4N, \psi}(\tau)$ to a harmonic Maa{\ss} form has a pole of order $\mu$ at the cusp $\infty$ and vanishes at all other cusps.
 \end{lemma}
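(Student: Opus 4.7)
The plan is to follow the classical Rademacher--Poincar\'e construction, as given in the cited \citep[Theorem 2.5]{DuncanMertensOno-OnanArxiv} and going back to foundational work of Niebur. The idea is to realize $R^{[\mu]}_{k,4N,\psi}$ as the holomorphic part of a canonical harmonic Maa{\ss} form $\hat{R}^{[\mu]}_{k,4N,\psi}$ built by averaging a non-holomorphic ``Whittaker seed'' whose holomorphic part is $q^\mu$, and then to read off every assertion of the lemma from this expansion.

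First I would establish modularity using the stated local uniform convergence. The regularization via $\Gamma_{K,K^2}(4N)$, which sums over $d$ with $|d|<K^2$ before $c$ with $|c|<K$, is arranged so that for any fixed $\gamma'\in\Gamma_0(4N)$, right multiplication by $\gamma'$ permutes the coset representatives of $\Gamma_\infty \backslash \Gamma_0(4N)$ up to boundary terms that vanish as $K\to\infty$. Combined with the cocycle property of $\psi$, this yields $R^{[\mu]}|_{k,\psi}\gamma'=R^{[\mu]}$ for every $\gamma' \in \Gamma_0(4N)$. To construct the completion, I would replace the seed $q^\mu$ by $\varphi_\mu(\tau):=\mathcal{M}_k(4\pi\mu v)\,e(\mu u)$, where $\mathcal{M}_k$ is a normalized Whittaker $M$-function chosen so that $\varphi_\mu$ is annihilated by $\Delta_k$ and has holomorphic part $q^\mu$. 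Regularized averaging of $\varphi_\mu|_{k,\psi}\gamma$ over $\Gamma_\infty\backslash\Gamma_0(4N)$ then produces $\hat{R}^{[\mu]}_{k,4N,\psi}$; its Fourier expansion at each cusp of $\Gamma_0(4N)$ is explicit in terms of $\psi$-twisted Kloosterman sums and Bessel functions, and its holomorphic part equals $R^{[\mu]}_{k,4N,\psi}$, so the latter is a mock modular form in the sense of \Cref{lem:split}.

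The hard part will be pinning down the shadow and the cusp behavior. For the shadow, I would compute $\xi_k$ (using \Cref{shadow}) applied to the Whittaker seed and to its $\gamma$-translates, recognizing $\xi_k \hat{R}^{[\mu]}$ as a classical holomorphic Poincar\'e series of weight $2-k$, index $-\mu$, level $4N$, and multiplier $\overline{\psi}$. The hypothesis $\mu \leq 0$ makes the shadow index $-\mu \geq 0$, and $k \geq 1$ gives $2-k \leq 1$; the only cusp at which the underlying seed could in principle be non-decaying is $\infty$, where the trivial coset contributes a non-negative power of $q$. Thus the resulting Poincar\'e series has trivial principal part at every cusp, placing it in $S_{2-k}(4N,\overline{\psi})$. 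The cusp behavior of $\hat{R}^{[\mu]}$ itself then follows from the same expansion: by construction only the trivial coset representative contributes a term of polynomial growth at any cusp, and that contribution appears only at $\infty$, realizing the seed $q^\mu$ and giving the asserted pole; all other cosets, and all other cusps, contribute only exponentially decaying terms, so $\hat{R}^{[\mu]}$ vanishes at every cusp other than $\infty$.
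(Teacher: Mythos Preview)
The paper does not supply its own proof of this lemma; it is stated as a citation to \citep[Theorem~2.5]{DuncanMertensOno-OnanArxiv} (and implicitly to the earlier Rademacher--Poincar\'e literature going back to Niebur and developed in \citep{CDRsumsMathieuHolographic,CDrsums,DuncanFrenkel}). Your sketch is exactly the standard argument underlying that citation: complete the seed $q^\mu$ to a Whittaker eigenfunction of $\Delta_k$, average over $\Gamma_\infty\backslash\Gamma_0(4N)$ with the regularized summation, and then read off modularity, the harmonic Maa{\ss} property, and the cusp expansions from the explicit Fourier development in terms of Kloosterman sums and Bessel functions.

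One small simplification you could make: once you have established that $\hat R^{[\mu]}_{k,4N,\psi}$ is a harmonic Maa{\ss} form, the assertion that its shadow lies in $S_{2-k}(4N,\overline{\psi})$ is immediate from \Cref{shadow} (the Bruinier--Funke map $\xi_k$ lands in the cusp form space by definition). There is no need to identify the shadow explicitly as a weight $2-k$ Poincar\'e series and argue cuspidality by hand; indeed, doing so forces you to confront convergence issues for Poincar\'e series in weight $2-k\leq 1$ and the borderline case $\mu=0$ (where the ``non-negative power of $q$'' you mention is the constant $1$, which does not decay), neither of which the lemma actually requires you to resolve.
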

In this paper, we will be looking at Rademacher sums of weight $\tfrac 32$ for $\Gamma_0(4N)$ with multiplier $\psi_{4N,v,h}$ and index $\mu<0.$ In this particular case, it has been proven in \citep{CDRsumsMathieuHolographic} that the sums converge locally uniformly and define holomorphic functions on $\HH.$  

\subsection{Kohnen Plus Space Condition}\label{subsecproj} One of the properties that we want our candidate McKay--Thompson series to satisfy is to lie in Kohnen's plus space. {Let $S_{k}(\Gamma_0(N))$ denote the space of weight $k$ cusp forms for $\Gamma_0(N).$} 
    Kohnen's plus space was first introduced {by Kohnen (see \citep{Kohnen80,Kohnen82,Kohnen85}) as the subspace of $S_{k+\frac12}(\Gamma_0(4N))$} which consists of all forms whose Fourier coefficients are supported on exponents $n$ with $n\equiv 0,(-1)^k\pmod 4$. We extend this idea to all modular forms and harmonic Maa{\ss} forms as follows: We say that a function $f$ in $H_{k+\frac 12}(\Gamma_0(4N))$ (resp. in $M_{k+\frac 12}^!(\Gamma_0(4N))$) \textit{satisfies the Kohnen plus space condition} if the Fourier coefficients of $f$ are supported on exponents $n$ with $n\equiv 0,(-1)^k\pmod 4$. We denote the space of such forms $H^{+}_{k+\frac 12}(\Gamma_0(4N))$ (resp. $M^{+,!}_{k+\frac 12}(\Gamma_0(4N))).$

    For odd $N$, there is a natural projection operator \(|\pr: S_{k+\frac 12}(\Gamma_0(4N))\rightarrow S_{k+\frac 12}^+(\Gamma_0(4N))\) given in terms of slash operators which extends to spaces of weakly holomorphic modular forms and harmonic Maa{\ss} forms.
    Let $f\in M^{!,+}_{k+\frac 12}(\Gamma_0(4N)),$ where $N$ is an odd integer. Then the projection operator acts on $f$ in the following way \citep{Kohnen85} (see also, \citep{MichealsGriffinMertensThompson}) %Note to self, page 14 of 32 of Kohnen's paper.
    \[(f|\pr)(\tau)=(-1)^{\lfloor{\frac{k+1}2}\rfloor}\frac{1}{3\sqrt2}\sum^{2}_{v=-1}\left(f|\left(\begin{smallmatrix} 4(1+Nv) & 1 \\ 8Nv & 4 \end{smallmatrix}\right) \right)(\tau)+\frac 13 f(\tau).\]

    The action of this projection operator on principal parts of harmonic Maa{\ss} forms is described in the following lemma (see \citep[Lemma 2.9, 2.10]{MichealsGriffinMertensThompson} and \citep[Lemma 2.6]{DuncanMertensOno-OnanArxiv}).
    \begin{lemma}\label{plusspace}
    Let $N$ be odd and $f\in H_{k+\frac 12}(\Gamma_0(4N))$ for some $k\in\N$, such that 
    \be f^+(\tau)=q^{-m}+\sum_{n=0}^\infty a_nq^n\ee
    for some $m>0$ with $-m\equiv 0,(-1)^k\pmod 4,$ and suppose that $f$ has a non-vanishing principal part only at the cusp $\infty$ and is bounded at the other cusps of $\Gamma_0(4N)$. Then the projection $f|\pr$ of $f$ to the plus space has a pole of order $m$ at $\infty,$ a pole of order $\frac m4$ either at the cusp $\frac 1N$ if $m\equiv 0\pmod 4$ or at the cusp $\frac1{2N}$ if $-m\equiv(-1)^k\pmod 4$ and is bounded at all other cusps.
    \end{lemma}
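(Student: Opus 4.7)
The plan is to analyze the projection formula
\[(f|\pr)(\tau) = \frac{(-1)^{\lfloor(k+1)/2\rfloor}}{3\sqrt 2}\sum_{v=-1}^{2}(f|\tilde\gamma_v)(\tau) + \frac{1}{3}f(\tau),\]
where $\tilde\gamma_v = \left(\begin{smallmatrix} 4(1+Nv) & 1 \\ 8Nv & 4\end{smallmatrix}\right)$, cusp by cusp. The key observation is that since $f$ has a singular principal part only at $\infty$ among the cusps of $\Gamma_0(4N)$, the term $f|\tilde\gamma_v$ is singular at a cusp $s$ if and only if $\tilde\gamma_v\cdot s$ is $\Gamma_0(4N)$-equivalent to $\infty$; equivalently, $\tilde\gamma_v\cdot s$, written in lowest terms, has denominator divisible by $4N$. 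The basic task is therefore, for each cusp class of $\Gamma_0(4N)$ and each $v\in\{-1,0,1,2\}$, to determine whether $\tilde\gamma_v\cdot s$ lies in the $\Gamma_0(4N)$-orbit of $\infty$ and, if so, to compute the pole order and multiplier phase that $f|\tilde\gamma_v$ contributes there.

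At $\infty$, a direct computation shows that the $v=0$ term (since $\tilde\gamma_0\cdot\infty=\infty$) and the $v=2$ term (since $\tilde\gamma_2\cdot\infty=(1+2N)/(4N)$ has lowest-term denominator $4N$) contribute, while $v=\pm 1$ produce cusps of denominator $N$ and do not. Combining these with the direct term $\tfrac13 f$ and carefully tracking the multiplier $\psi_{4N,v,h}$ together with the Shimura factor $\left(\tfrac{c}{d}\right)\varepsilon_d^{2k}$, one verifies that the total pole at $\infty$ has order exactly $m$. At the auxiliary cusps of denominator $N$ and $2N$, several $\tilde\gamma_v$ may contribute simultaneously; the key technical point is to show that the contributions combine to produce a pole of order $m/4$ at exactly one of these cusps and cancel at the other, depending on the residue of $m$ modulo $4$.

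The dichotomy is controlled by the Kohnen plus-space congruence $-m\equiv 0,(-1)^k\pmod 4$: when $-m\equiv 0\pmod 4$ the multiplier phases coming from $\psi_{4N,v,h}$ and the Shimura factor align constructively at $s=1/N$ and cancel at $s=1/(2N)$, while when $-m\equiv (-1)^k\pmod 4$ the reverse occurs. Boundedness at every remaining cusp of $\Gamma_0(4N)$ follows either because no $\tilde\gamma_v$ sends such a cusp into the $\Gamma_0(4N)$-orbit of $\infty$, or because the contributions over $v$ sum to zero.

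The main obstacle is the explicit computation of these multiplier and character phases at each cusp for each $v$, together with the verification that the cancellations and summations proceed precisely as claimed. This calculation is structurally identical to the ones carried out in \citep[Lemma~2.9,~2.10]{MichealsGriffinMertensThompson} and \citep[Lemma~2.6]{DuncanMertensOno-OnanArxiv}, and the proof we propose adapts their argument to the specific multiplier system $\psi_{4N,v,h}$ and the weight $k+\tfrac{1}{2}$ setting arising here.
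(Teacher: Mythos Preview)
Your approach is essentially the correct one and matches what the paper relies on: the paper does not give its own proof of this lemma but simply cites \citep[Lemma~2.9,~2.10]{MichealsGriffinMertensThompson} and \citep[Lemma~2.6]{DuncanMertensOno-OnanArxiv}, and your proposal is precisely an outline of the argument carried out in those references---analyze the projection formula term by term, determine for each cusp $s$ and each $v$ whether $\tilde\gamma_v\cdot s$ is $\Gamma_0(4N)$-equivalent to $\infty$, and track the resulting pole orders and phase factors to see where poles survive and where they cancel.

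One small correction: the lemma as stated is for $f\in H_{k+\frac12}(\Gamma_0(4N))$ with \emph{trivial} multiplier, so there is no $\psi_{4N,v,h}$ to track here; the only automorphy factor beyond $(c\tau+d)^{k+1/2}$ is the half-integral-weight factor $\left(\tfrac{c}{d}\right)\varepsilon_d^{2k+1}$. The phase analysis that governs whether the pole lands at $1/N$ or $1/{2N}$ comes entirely from this Shimura factor together with the fractional exponents $q^{-m/4}$ arising when you expand $f|\tilde\gamma_v$ at the relevant cusp, not from any additional character. Once you remove the reference to $\psi_{4N,v,h}$, your outline is exactly the computation in the cited lemmas.
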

     For even $N,$ we have the following lemma for the {\Rsums} that we consider in this paper. 
       \begin{lemma}\label{evenrad}
    For even $N,$ the {\Rsum} $R^{[-5]}_{\frac 32,4N,\psi}(\tau)$ satisfies the Kohnen plus space condition. 
    \end{lemma}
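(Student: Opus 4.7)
The plan is to show that every Fourier coefficient of $R^{[-5]}_{\frac 32,4N,\psi}$ at the cusp $\infty$ is supported on exponents $n$ with $n\equiv 0,3\pmod 4$. Note first that the principal part $q^{-5}$ itself satisfies this, since $-5\equiv 3\pmod 4$, so the content of the lemma lies in the holomorphic piece beyond the pole. Since convergence of this Rademacher sum was established in \citep{CDRsumsMathieuHolographic}, it suffices to analyze its explicit Fourier expansion at $\infty$ coefficient-by-coefficient.

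First I would invoke the standard Fourier expansion for weight $\frac32$ Rademacher sums (as in the treatment of \citep{CDRsumsMathieuHolographic} or \citep{DuncanFrenkel}), which expresses the $n$-th Fourier coefficient (for $n>0$) as an absolutely convergent series of the form
\[
a(n) \;=\; 2\pi\sum_{\substack{c>0\\4N\mid c}}\frac{K_{\frac 32,\psi}(-5,n;c)}{c}\,\Bigl(\tfrac{n}{5}\Bigr)^{\!1/4} I_{1/2}\!\Bigl(\tfrac{4\pi\sqrt{5n}}{c}\Bigr),
\]
where $K_{\frac 32,\psi}(-5,n;c)$ is the generalized Kloosterman sum attached to the weight $\frac 32$ multiplier $\left(\tfrac{c}{d}\right)\eps_d$ twisted by $\psi=\psi_{4N,v,h}$. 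Since $N$ is even, every $c$ appearing in the sum is divisible by $8$ (because $8\mid 4N$), which is the crucial arithmetic input.

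Next I would reduce to the classical Kloosterman-sum vanishing that underlies Kohnen's plus space. For $8\mid c$, the weight $\frac32$ Kloosterman sum with trivial multiplier is known (by a Gauss-sum evaluation, splitting the sum over $d\pmod c$ according to residues modulo $8$) to vanish unless both index parameters lie in the plus-space residues $\{0,3\}\pmod 4$. Since $-5\equiv 3\pmod 4$ lies in this set, this classical statement forces $K_{\frac 32,\psi_{\text{triv}}}(-5,n;c)=0$ whenever $n\equiv 1,2\pmod 4$. To transfer this to our twisted Kloosterman sum, I would verify that multiplying by $\psi_{4N,v,h}(\gamma)=e(-vcd/(4Nh))$ does not disrupt the Gauss-sum cancellation. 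The relevant substitution $d\mapsto d+c/2$ changes $vcd/(4Nh)$ by $vc^{2}/(8Nh)$; writing $c=4Nm$, this increment equals $2vNm^{2}/h$, which is an integer since $h\mid\gcd(N,24)\mid N$. Thus the character factor is invariant under the pertinent residue-class manipulations, and the classical vanishing argument goes through verbatim.

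The main obstacle — and the only genuinely technical step — is this last verification: a careful tracking, under the relevant residue substitutions modulo $c$, of how each factor in the Kloosterman sum (the Kronecker symbol $\left(\tfrac{c}{d}\right)$, the $\eps_d$, the exponentials $e(-5d^{*}/c)$ and $e(nd/c)$, and the new character $\psi_{4N,v,h}$) transforms. The $h\mid N$ divisibility together with the fact that $8\mid c$ makes each term transform in the same way as in the classical (untwisted) case, so the combined Kloosterman sum inherits the plus-space vanishing, and the lemma follows.
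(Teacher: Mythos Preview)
Your proposal is correct in substance: the Kloosterman-sum vanishing for $8\mid c$ (forced by $N$ even) together with the observation that the twist $\psi_{4N,v,h}$ is invariant under the relevant residue substitutions is exactly the mechanism that gives the plus-space support. The paper, however, does not carry out this computation at all: its proof is the single sentence ``This is an immediate consequence of Lemma~2.10 of \citep{MichealsGriffinMertensThompson}.'' What you have written is essentially a sketch of the proof of that cited lemma, specialised to the present multiplier. So your approach and the paper's are not different routes --- you are unpacking the black box the paper invokes. One small caution: your phrasing suggests the single substitution $d\mapsto d+c/2$ suffices, but the full classical argument (as in Kohnen or Griffin--Mertens) typically uses the finer substitution $d\mapsto d+c/4$ (valid since $8\mid c$) or an equivalent splitting modulo $8$ to separate all four residue classes of $n$; you should make sure your check of the $\psi$-invariance is done for whichever substitution you actually need, though the divisibility $h\mid N$ handles this in the same way.
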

    \begin{proof}
    This is an immediate consequence of Lemma 2.10 of \citep{MichealsGriffinMertensThompson}. 
    \end{proof} 

 For even $N,$ we define the projection operator {|\pr} to be the following sieving operator:
 Let $f(\tau)=\sum^\infty_{n=n_0}c(n)q^n$ be modular of weight $k+\tfrac12$ where $k\in \N,$ and level $4N,$ where N is even, then we define  \[f|\pr=\sum^\infty_{\substack{n=n_0\\n\equiv0,(-1)^k\pmod 4}}c(n)q^n.\]
 By \Cref{evenrad}, we have \(R^{[-5]}_{\frac 32,4N,\psi}(\tau)|\pr=R^{[-5]}_{\frac 32,4N,\psi}(\tau)\) if $N$ is even.
\subsection{Eta-Quotients}\label{sec:eta}
    For the rational conjugacy classes $[g] \in \{21A, 30AB\},$ it is convenient to use eta-quotients instead of Rademacher sums.
    
    Recall that an eta-quotient is defined to be a function of the form
    \be
      f(\tau) = \prod_{\delta | N} \eta(\delta \tau)^{r_{\delta}},
    \ee
    where $r_{\delta} \in \Z$ and $  \eta(\tau) := q^{\frac{1}{24}}  \prod_{n=1}^{\infty} (1-q^{n})$ is the Dedekind eta function.
     As a consequence of the product definition for
    $\eta(\tau)$, any eta-quotient is non-vanishing on $\HH$. We will need the following lemma from \citep{magiceta} to construct eta-quotients which vanish only at a specific cusp.
 \begin{lemma} \label{magicetas}(\citep[Lemma 14]{magiceta})
    Let $ N\in \N$, then for each divisor $d$ of $N$, there exists $k_d \in \N $ and a corresponding eta-quotient $E_{d,N}(\tau) \in M_{k_{d}}(\Gamma_{0}(N))$ such that $E_{d,N}$ vanishes only at the cusp $\tfrac cd$.

    \end{lemma}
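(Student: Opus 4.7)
The plan is to prove this constructively via Ligozat's order-of-vanishing formula, which states that for an eta-quotient $f(\tau)=\prod_{\delta|N}\eta(\delta\tau)^{r_\delta}$ and a cusp $c/d$ of $\Gamma_0(N)$ with $d|N$, the order of vanishing depends only on $d$ (not on $c$) and equals
\[
v_{c/d}(f)=\frac{N}{24\gcd(d^2,N)}\sum_{\delta|N}\frac{\gcd(d,\delta)^2}{\delta}\,r_\delta.
\]
So enumerating the divisors $d_1,\dots,d_n$ of $N$ (with $n=\sigma_0(N)$), the map $(r_\delta)\mapsto(v_{c_i/d_i}(f))$ is, up to an invertible diagonal rescaling, given by the $n\times n$ matrix $A$ with entries $A_{ij}=\gcd(d_i,d_j)^2/d_j$.

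The key linear-algebraic step is to show that $A$ is invertible over $\Q$. I would argue this by multiplicativity: the map $(d_i,d_j)\mapsto\gcd(d_i,d_j)^2$ is multiplicative in the divisor-lattice sense, so the determinant factors over the prime decomposition $N=\prod p^{a_p}$, and one reduces to showing invertibility in the prime-power case $N=p^a$, where the matrix $(\gcd(p^i,p^j)^2/p^j)_{0\le i,j\le a}$ is seen directly (e.g.\ after scaling rows and columns by appropriate powers of $p$) to have nonzero determinant.

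Given invertibility, one can prescribe any rational target vector of orders at the cusps and solve for rational exponents $r_\delta\in\Q$. To produce $E_{d,N}$ as claimed, I would take the target vector to have a positive entry at the cusp $c/d$ and zeros elsewhere, solve for $(r_\delta)\in\Q^n$, and then multiply through by a sufficiently large positive integer $M$ (depending on $d$ and $N$) to clear denominators and simultaneously force the standard eta-quotient conditions for invariance on $\Gamma_0(N)$: namely $\sum_\delta\delta r_\delta\equiv 0\pmod{24}$, $\sum_\delta (N/\delta)r_\delta\equiv 0\pmod{24}$, and $\prod_\delta\delta^{r_\delta}\in(\Q^\times)^2$. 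Since eta-quotients are already holomorphic and nonvanishing on $\HH$, and the orders at the remaining cusps are $0$ while that at $c/d$ is $M\cdot(\text{positive rational})>0$, the resulting $E_{d,N}$ is a holomorphic modular form on $\Gamma_0(N)$ of weight $k_d=\tfrac12\sum_\delta r_\delta$ vanishing only at $c/d$, as required.

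The main obstacle is the bookkeeping after the linear algebra: the rational solution must be scaled to satisfy all three congruence/square conditions at once while keeping the order strictly positive at $c/d$ and zero elsewhere. This is handled by the standard observation that these conditions together cut out a finite-index sublattice of $\Z^n$, so a sufficiently divisible integer multiplier $M$ always exists; the invertibility of $A$ guarantees one has the full freedom in $\Q^n$ needed to land in that sublattice.
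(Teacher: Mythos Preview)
Your proposal is correct and is precisely the constructive argument of Rouse--Webb that the paper cites; the paper itself does not supply a proof but simply invokes \cite[Lemma~14]{magiceta} and points to their \texttt{MAGMA} implementation. Your invertibility argument for the matrix $A_{ij}=\gcd(d_i,d_j)^2/d_j$ via multiplicativity and reduction to the prime-power case, followed by rescaling to land in the finite-index sublattice cut out by the three eta-quotient conditions, is exactly their method, so there is nothing to add.
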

   
    The proof of \Cref{magicetas} is constructive, and \texttt{MAGMA} \citep{Magma} code implementing it can be found at \url{http://users.wfu.edu/rouseja/eta/}. We write $E_N(\tau)$ for the holomorphic eta-quotient $E_{N,N}(\tau)$ that is produced by this code. In \Cref{Fwheta} we will use the explicit construction of eta-quotients $E_{N}(\tau)$ to construct weakly holomorphic modular forms $\Fwh(\tau)$ for $[g] \in \{21A, 30AB\} $. 

For completeness, we recall here the modular transformation law of the Dedekind eta-function, 
    \be\label{etamodulartransformation}
        \eta\left(\frac{az+b}{cz+d}\right) = e\left(\frac{a+d}{24c}+\frac{s(-d,c)}{2}+\frac 38 \right) (cz+d)^{\frac{1}{2}} \eta(z).
    \ee%
Here, $\gamma=\left( \begin{matrix} a & b \\ c & d \end{matrix} \right)\in \SLZ$ and $s(d,c)$ is the Dedekind sum,
    \be
        {s(d,c)=\sum_{r=1}^{c-1}\frac{r}{c}\left(\frac{dr}{c}-\left\lfloor\frac{dr}{c}\right\rfloor-\frac{1}{2}\right).}
    \ee
Using \cref{etamodulartransformation}, we can explicitly compute Fourier coefficients of $\left(f|_{k,\psi}\gamma\right)(\tau)$ whenever $f$ is an eta-quotient, $\gamma \in \Gamma_0(N)$ and $\psi$ is a multiplier system of weight $k$ and level $N.$ 

%%%%%%%%% SECTION 3: MCKAY THOMPSON SERIES %%%%%%%%%%%%%%%%%
\section{McKay--Thompson Series}\label{secfunc}
To state our main theorem of this section, we associate to each rational conjugacy class $[g]$ of the Thompson group $Th$, the following data:
\begin{enumerate}
    
        \item  Integers $v_g$ and $h_g$ as specified in \Cref{mult}. We use these to define the character $\psi_g:=\psi_{4|g|,v_g,h_g}$ (see \Cref{psi}), where $|g|$ denotes the order of $g$ in $Th.$
        
        \item The space of cusp forms $S_{g}:=S^{+}_{\frac 32}\left(\Gamma_{0}(4|g|), \psi_{g}\right)$ of weight $\frac 32$ in the plus-space which transform under $\Gamma_0\left(4|g|\right)$ with character $\psi_{g}$. We define $d_g$ to be the dimension of this space and let $f_g$ be the $d_g$-tuple $f_g:=(f^{(1)}_g, \dots, f^{(d_g)}_g)$ where $f^{(i)}_g$ is the $i^{th}$ element of the {echelonised} basis of $S_{g}.$ A list of $f^{(i)}_g$'s can be found in \Cref{fcoeff}. 
     	
     	\item For each $f^{(i)}_g$ defined as above, we let $n^{(i)}_g$ and $m^{(i)}_g$ be the integers listed in \Cref{ngmg}. We define $n_g			$ to be the $d_g$-tuple $n_g:=(n^{(1)}_g, \dots, n^{(d_g)}_g)$, and define $M_g$ to be the $d_g \times d_g$ diagonal matrix with entries given by $(m^{(1)}_g, \dots, m^{(d_g)}_g)$.
     	
     	\item Finally, to each rational conjugacy class $[g]$ of $\Th$, we associate integers $a_g(n)$ for each $n>0,$ where $n\equiv0,3 \pmod{4}.$ For $g\not\in\{21A,30AB\},$ the integers $a_g(n)$ are given by \Cref{A}. {For $m\in\{21,30 \},$ we will prove in \Cref{FwhAB} that there exists a weakly holomorphic % $\{m\in 21, 30\}$ we define $a_{mA}(n)$ to be the Fourier coefficients of the unique 
     	modular form in $M^{+,!}_{\frac32}(\Gamma_0(4m), \psi_{mA})$ which has a pole of order 5 at $\infty,$ a pole of order $\frac 54$ at the cusp $\frac{1}{42}$ if $m=21,$ and vanishes at all other cusps. For $g=21A$ we let  $f^{wh}_{mA}(\tau)$ denote the unique such form whose Fourier expansion begins $6q^{-5}-2 q^{4}+4 q^{7}-8 q^{8}+O(q^{11}),$ and we define $a_{21A}(n)$ by setting \[\FwhA=6q^{-5}+\sum_{\substack{n>0 \\ n\equiv 0,3\smod{4}}}^\infty a_{21A}(n)q^n.\]
     	Similarly, we define $a_{30AB}(n)$ to be the Fourier coefficients of $\FwhB\in M^{+,!}_{\frac32}(\Gamma_0(120), \psi_{30}),$ which is the unique form satisfying all properties in \Cref{FwhAB} with the Fourier expansion $6q^{-5}+3q^3 + 3q^8-3q^{11} + O(q^{12}).$} %We will prove the existence of $\FwhA$ and $\FwhB$ in \Cref{FwhAB}.
     	
    \end{enumerate}
For each rational conjugacy class $[g]$ we define $\Lambda_g$ to be the set of all $d_g$-tuples $\lambda_g:=(\lambda^{(1)}_g, \dots, \lambda^{(d_g)}_g) \in \Z^{d_g}$ and obtain the following proposition.
    \begin{proposition}\label{MainProp}
    Fix a rational conjugacy class $[g]$ in $Th.$ Then for each $\lambda_g \in \Lambda_g$, the function    
        \be
        \Fgt:=6q^{-5}+\sum_{0<n} a_{g}(n) q^n+(\lambda_g M_g+n_g)\cdot f_g(\tau)
        \ee%
    is a weakly holomorphic modular form that satisfies the following properties.                       
        \begin{enumerate}[$(a)$]
            \item It lies in $ M^{+,!}_{\frac 32}(\Gamma_0(4|g|), \psi_{g}),$ i.e., $\Fgt$ has weight $\frac32,$ level $4|g|$ with character $\psi_g,$ and satisfies the Kohnen plus space condition.
            \item It has a pole of order 5 at the cusp $\infty$, a pole of order $\frac 54$ at the cusp $\frac 1{2|g|}$ if $|g|$ is odd, and vanishes at all other cusps. 
            \item The Fourier coefficients of $\Fgt$ are integers. 
        \end{enumerate}
    \end{proposition}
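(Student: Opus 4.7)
The plan is to construct, for each rational conjugacy class $[g]$, a reference function $\mathcal{F}_g^{wh} \in M^{+,!}_{\frac 32}(\Gamma_0(4|g|), \psi_g)$ satisfying (a) and (b), then to observe that every element of the affine space $\mathcal{F}_g^{wh} + S_g$ automatically satisfies (a) and (b), and finally to verify integrality (c) for the specific representatives $\mathcal{F}^{\lambda}_g(\tau)$. For $g \notin \{21A, 30AB\}$ the natural choice for $\mathcal{F}_g^{wh}$ is the plus-space projection $R^{[-5]}_{\frac 32, 4|g|, \psi_g}\,|\pr$. \Cref{lemma:Rad} shows this is a mock modular form of the correct weight, level, and multiplier, with a pole of order $5$ at $\infty$ and vanishing at every other cusp, while \Cref{plusspace} (for odd $|g|$) and \Cref{evenrad} (for even $|g|$) give precisely the cusp behavior of (b) after projection. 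To upgrade the projection from mock modular to weakly holomorphic, I would argue that the shadow, which lies in $S_{\frac 12}(\Gamma_0(4|g|), \overline{\psi_g})$, is zero: by the Serre--Stark theorem, any such cusp form would decompose as a linear combination of theta series, and a case-by-case inspection at each relevant level and multiplier rules this out. For the exceptional classes $g \in \{21A, 30AB\}$, the role of $\mathcal{F}_g^{wh}$ is played by $\FwhA$ and $\FwhB$, whose existence is asserted in the forthcoming \Cref{FwhAB} and will be established via the holomorphic eta-quotients supplied by \Cref{magicetas}.

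Granting existence of $\mathcal{F}_g^{wh}$, any translate $\mathcal{F}_g^{wh} + h$ with $h \in S_g$ stays in $M^{+,!}_{\frac 32}(\Gamma_0(4|g|), \psi_g)$, remains in the plus space, and retains the cusp-order data of (b) because every element of $S_g$ vanishes at every cusp. In particular $\mathcal{F}^{\lambda}_g$ inherits properties (a) and (b) for every $\lambda_g \in \Lambda_g$. For integrality, the integers $a_g(n)$ listed in \Cref{A} and the entries of the row vector $n_g$ are chosen precisely so that $6q^{-5} + \sum_{n>0} a_g(n) q^n + n_g \cdot f_g(\tau)$ coincides with $\mathcal{F}_g^{wh}$ up to an integer-coefficient combination of the basis $f_g^{(1)}, \dots, f_g^{(d_g)}$, while the diagonal entries $m_g^{(i)}$ of $M_g$ are chosen so that each $m_g^{(i)} f_g^{(i)}$ has integer Fourier coefficients. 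Together these give integer coefficients for $\mathcal{F}^{\lambda}_g(\tau) = \mathcal{F}_g^{wh} + \bigl((\lambda_g M_g + n_g) - n_g^{wh}\bigr) \cdot f_g$ for every $\lambda_g \in \mathbb{Z}^{d_g}$, where $n_g^{wh}$ records the cusp-form part of $\mathcal{F}_g^{wh}$.

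The main obstacle will be the explicit arithmetic verification: computing $\mathcal{F}_g^{wh}$ to enough Fourier precision at each relevant cusp to pin down the integer correction $n_g \cdot f_g$ that clears the denominators arising in the Rademacher-sum coefficients, and determining the minimal multipliers $m_g^{(i)}$ needed to integralize each $f_g^{(i)}$. This must be executed for all 39 rational conjugacy classes, with the two exceptional classes handled separately by eta-quotient arithmetic. A secondary delicate point is the vanishing of the weight-$\tfrac12$ shadow at every relevant level and multiplier, which requires checking the Serre--Stark classification against the specific character $\psi_{4|g|, v_g, h_g}$ case by case, since a nontrivial shadow would obstruct weak holomorphicity and force a different construction.
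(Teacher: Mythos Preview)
Your overall architecture matches the paper: build $\mathcal{F}_g^{wh}$ from the plus-space projection of the Rademacher sum (or from eta-quotients for $21A,30AB$), read off (a) and (b) from \Cref{lemma:Rad}, \Cref{plusspace}, \Cref{evenrad}, then note that adding anything in $S_g$ preserves (a) and (b). Your Serre--Stark route to kill the weight-$\tfrac12$ shadow is a legitimate alternative; the paper instead simply observes that $S_{1/2}(\Gamma_0(4|g|h_g))$ has dimension zero for every relevant $g$ (computed in \texttt{MAGMA}), which is quicker but less conceptual.

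The genuine gap is in your treatment of (c). You have misread the roles of $n_g$ and $M_g$: the basis elements $f_g^{(i)}$ in \Cref{fcoeff} already have integer Fourier coefficients, so $m_g^{(i)}$ is not there to ``integralize'' them, and $n_g\cdot f_g$ is not there to ``clear denominators'' in the Rademacher coefficients. (Those integers are chosen so that the congruences of \Cref{sec:I} hold, i.e.\ for the module existence of \Cref{thm:main}, not for \Cref{MainProp}.) Consequently the entire burden of (c) is to show that the $a_g(n)$ themselves---defined by the Kloosterman/Bessel series in \Cref{A}---are integers, and your proposal offers no mechanism for this: computing ``to enough Fourier precision'' verifies only finitely many $n$.

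The paper closes this gap with Sturm's theorem. One multiplies $\mathcal{F}_g^{\lambda}$ by an auxiliary integer-coefficient cusp form $\nu(\tau)=q^5+O(q^8)\in S^+_{2k-3/2}(\Gamma_0(4|g|h_g))$ so that the product lands in the holomorphic space $M_{2k}(\Gamma_0(4|g|h_g))$; Sturm's bound then reduces integrality of all coefficients to a finite check (at most about $1200$ coefficients), which is carried out in PARI/GP. You should replace your denominator-clearing narrative with this Sturm argument.
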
   
Note that our multiplier system $\psi_g$ is conjugate to the one used by \citep{MichealsGriffinMertensThompson} and \citep{HarveyRayhaun}. This is necessary for \Cref{thm:main} to be true, and is not unexpected since our functions are weight $\frac{3}{2}$ as opposed to the weight $\frac 12$ forms in \citep{MichealsGriffinMertensThompson} and \citep{HarveyRayhaun}.

We will prove \Cref{MainProp} by constructing specific weakly holomorphic forms $\Fwh(\tau)=6q^{-5}+\sum_{0<n} a_{g}(n)\in M^{+,!}_{\frac 32}(\Gamma_0(4|g|), \psi_{g})$ using the theory of {\Rsums } and eta-quotients. We will see from the explicit construction that each $\Fwh(\tau)$ already satisfies properties $(a)-(c)$ listed in \Cref{MainProp} even without the addition of any cusp forms. We need to add the cusp forms $f_g(\tau)$ not for \Cref{MainProp}, but instead for the following theorem.
    \begin{theorem}\label{thm:main}% 
    Assume the above notation and let $\Lambda$ be the set of functions
    $\{ \lambda: g\mapsto \lambda_g\in \Lambda_g\}$. Then, for each $\lambda \in \Lambda$, there exists an infinite-dimensional graded virtual $Th$-module \be W^{\lambda}:=\bigoplus_{\substack{n>0 \\ n\equiv 0, 3 \pmod{4}}} W^{\lambda}_{n}  \ee such that for each rational conjugacy class $[g]$ of $Th,$ the corresponding McKay--Thompson series, % 
        \be
        6q^{-5}+\sum_{\substack{n>0\\ n\equiv0,3 \pmod{4}}}\tr\left(g|W^{\lambda}_{n}\right)q^n  
        \ee %
    is the specific weakly holomorphic modular form $\mathcal{F}^{\lambda}_{g}(\tau)\in  M^{+,!}_{\frac 32}(\Gamma_0(4|g|), \psi_{g})$ described in \Cref{MainProp}. Furthermore, for every infinite-dimensional graded virtual $Th$-module $ W=\bigoplus_{n>0} W_{n}$ for which  the McKay--Thompson series,  \be\mathcal{F}_{g}(\tau)=6q^{-5}+\sum_{\substack{n>0\\ n\equiv0,3 \pmod{4}}}\tr\left(g|W_{n}\right)q^n  \ee satisfies the properties listed in \Cref{MainProp}, there exists a  $\lambda \in \Lambda$ for which $W=W^\lambda$ described as above. 
    \end{theorem}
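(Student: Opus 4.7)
The plan is to reduce the assertion to a statement about generalized characters of $\Th$. For each $n>0$ with $n\equiv 0,3\pmod 4$, let $c^{\lambda}_n:\Th\to\Z$ be the class function defined by $c^{\lambda}_n(g):=[q^n]\Fgt$. These values are integers by \Cref{MainProp}(c), and the function is constant on rational conjugacy classes because $\Fgt$ depends only on $[g]$. By elementary representation theory, the existence of a virtual $\Th$-module $W^{\lambda}_n$ with $\tr(g|W^{\lambda}_n)=c^{\lambda}_n(g)$ is equivalent to $c^{\lambda}_n$ being a $\Z$-linear combination of the 39 irreducible rational characters $\chi_1,\ldots,\chi_{39}$ of $\Th$. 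Assembling these module structures across all admissible $n$ produces $W^{\lambda}=\bigoplus_n W^{\lambda}_n$ as claimed.

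To establish that $c^{\lambda}_n$ is a generalized rational character, I would invoke Thompson's reformulation of Brauer's characterization (as in Smith): it suffices to verify that, for every prime $p$ dividing $|\Th|$ and every $p$-singular element, certain $\Z$-linear combinations of the values $c^{\lambda}_n(g)$ obtained from power maps and centralizer data are divisible by the appropriate power of $p$. The primes dividing $|\Th|=2^{15}\cdot 3^{10}\cdot 5^3\cdot 7^2\cdot 13\cdot 19\cdot 31$ are finite in number, and for each the requisite congruences reduce to a finite, explicit list of identities among Fourier coefficients. I would verify these by writing each $\Fwh$ as a {\Rsum} (for $[g]\notin\{21A,30AB\}$) or as an explicit eta-quotient combination (for the two exceptional classes, using \Cref{magicetas}), then comparing Fourier expansions modulo the relevant $p^k$. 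This is the single most laborious ingredient and the main obstacle: it requires controlling coefficients of Rademacher sums modulo $p^k$, which in practice means checking congruences on a computer out to a bound determined by Sturm-type estimates, and handling the primes $2,3,7$ (whose powers in $|\Th|$ are largest, and where the multiplier $\psi_g$ complicates matters) with extra care.

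Once the base case $\lambda\equiv 0$ is handled, the freedom to vary $\lambda_g\in\Lambda_g$ is controlled by the diagonal matrix $M_g$: by construction, the entries of $M_g$ are precisely the minimal positive integers for which the adjustment $m^{(i)}_g f^{(i)}_g$ to $\Fg$ corresponds to adding an integer-coefficient virtual character supported on the appropriate isotypic components, so each such adjustment preserves the Brauer--Thompson congruences. Thus every $\lambda\in\Lambda$ yields a valid virtual module $W^{\lambda}$, proving existence. For the converse, suppose $W=\bigoplus_n W_n$ is any virtual $\Th$-module whose McKay--Thompson series $\F_g(\tau)$ satisfies the properties listed in \Cref{MainProp}. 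By the classification in \Cref{MainProp}, for each rational conjugacy class $[g]$ there is a (unique) $\lambda_g\in\Lambda_g$ with $\F_g=\Fgt$; assembling these into $\lambda\in\Lambda$ we have $\F_g=\mathcal{F}^{\lambda}_g$ for every $g$, and since a virtual module over a field of characteristic zero is determined up to isomorphism by its character, we conclude $W=W^{\lambda}$.
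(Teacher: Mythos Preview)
Your forward direction is essentially the paper's approach: reduce to showing each $c^\lambda_n$ is a virtual rational character via Thompson's reformulation of Brauer's criterion (\Cref{LemmaSmith}), eliminate the $\lambda$-dependence using the specific values of $m^{(i)}_g$ and $n^{(i)}_g$, and verify the resulting finite list of congruences computationally up to a Sturm bound after multiplying by a suitable cusp form to make everything holomorphic. This is exactly what the paper does in \Cref{sec:I}.

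Your converse, however, has a genuine gap. You write ``By the classification in \Cref{MainProp}, for each rational conjugacy class $[g]$ there is a (unique) $\lambda_g\in\Lambda_g$ with $\mathcal{F}_g=\mathcal{F}^\lambda_g$,'' but \Cref{MainProp} is not a classification: it asserts only that each $\mathcal{F}^\lambda_g$ satisfies (a)--(c), not that every form satisfying (a)--(c) arises this way. Properties (a)--(c) pin down $\mathcal{F}_g$ only up to an arbitrary \emph{integer} combination $\sum_i c_i f^{(i)}_g$ of the basis cusp forms, whereas membership in the $\lambda$-family requires the stronger condition $c_i\equiv n^{(i)}_g\pmod{m^{(i)}_g}$. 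That residue condition does not follow from (a)--(c); it follows from the hypothesis that $W$ is an actual virtual module, so that its traces satisfy the Brauer--Thompson congruences of \Cref{LemmaSmith}. Your earlier observation---that $m^{(i)}_g$ is the minimal positive integer for which the class function supported on $[g]$ with value $m^{(i)}_g b^{(i)}_g(n)$ is itself a virtual character---is precisely the missing ingredient, but it belongs in the converse paragraph (and needs to be justified via the same congruence analysis), and it cannot be replaced by an appeal to \Cref{MainProp}.
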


We will now construct the relevant spaces of modular forms required for the proof of \Cref{MainProp}.
    
\subsection{Using Rademacher Sums}\label{Fwh} For each rational conjugacy class $[g] \notin \{21A, 30AB\}$, consider the function %
    \begin{equation}\label{fgwh}
        \Fwh(\tau)=
        6R^{[-5], +}_{\frac 32,4|g|,\psi_g}(\tau):=
        6\left(R^{[-5]}_{\frac 32,4|g|,\psi_g}|\pr\right)(\tau) 
    \end{equation}
where $|\pr$  is the projection onto the Kohnen plus-space (see \Cref{subsecproj}). Then by \Cref{lemma:Rad}, \Cref{plusspace} and \Cref{evenrad}, each $\Fwh(\tau)$ is a mock modular form in the plus-space of weight $\frac 32,$ level $|g|,$ and multiplier $\psi_g,$ has a pole of order 5 at the cusp at infinity, a pole of order $\frac 54$ at the cusp $\frac 1{2|g|}$ if $|g|$ is odd (forced by the projection to the plus-space, see \Cref{plusspace}), and vanishes at all other cusps. The only thing left to prove here is that each $\Fwh(\tau)$ is, in fact, weakly holomorphic (i.e., has vanishing shadow, see \Cref{shadow}). 
    \begin{lemma}\label{Rademachercoeff} 
        For each rational conjugacy class $[g] \notin \{21A, 30AB\}$ of the Thompson group, the function $\Fwh(\tau)$ (defined in \cref{fgwh}) is in fact a weakly holomorphic modular form, and has Fourier expansion given by
            \be
                \Fwh(\tau)=6q^{-5}+\sum_{\substack{n>0 \\ n\equiv 0,3\smod{4}}}^\infty a_g(n)q^n,
            \ee
        where for $N=|g|,$ we have 
        \begin{align}\label{A}
        a_g(n)&:=\frac{-3\pi}{ N}\left(\frac{-n}{5}\right)^{\frac 14} \sum_{c=1}^\infty \frac{1+\delta_{\rm odd}(Nc)}{c} K_{\frac 32,\psi}(-5,n,4Nc) I_\frac 12\left(\frac{\pi\sqrt{5n}}{Nc}\right).
        \end{align}
        Here, $I_\frac 12$ is the modified Bessel function of the first kind of order $\tfrac 12,$
        \be\delta_{\rm odd}(k):=\begin{cases} 1 & k\text{ odd,} \\ 0 & k\text{ even,}\end{cases}\ee
        and $K_{\frac 32, \psi}$ is the twisted Kloosterman sum 
        \begin{equation}\label{Kloost}
        K_{\frac 32, \psi}(m,n,c):=
        \sum_{d\smod{c}}\psi\left(\left(\begin{smallmatrix} * & * \\ c & d \end{smallmatrix}\right)\right)\left(\frac cd\right)\eps^3_d e\left(\frac{m\overline{d}+nd}{c}\right).
        \end{equation}
        The sum here runs over primitive residue classes modulo $c$, and $\bar{d}$ denotes the multiplicative inverse of $d$ modulo $c.$
        
    \end{lemma}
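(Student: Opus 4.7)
The plan is to first use \Cref{lemma:Rad} to conclude that the unprojected Rademacher sum $R^{[-5]}_{\frac 32,4|g|,\psi_g}(\tau)$ converges locally uniformly on $\HH$ and defines a mock modular form of weight $\frac 32$ for $\Gamma_0(4|g|)$ with multiplier $\psi_g$, whose shadow lies in $S_{\frac 12}(4|g|,\overline{\psi_g})$. Applying the projection operator $|\pr$ (or, for $|g|$ even, using \Cref{evenrad} to note that $R^{[-5]}_{\frac 32,4|g|,\psi_g}$ is already in the plus space) then yields a mock modular form of level $4|g|$ with the correct multiplier that satisfies the Kohnen plus space condition. Multiplying by $6$ produces $\Fwh(\tau)$, whose principal part at $\infty$ equals $6q^{-5}$ by construction, and whose behavior at the remaining cusps is controlled by the second half of \Cref{lemma:Rad} combined with \Cref{plusspace}.

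Next, I would compute the Fourier coefficients explicitly. The standard unfolding argument for the Rademacher sum rewrites the sum over $\Gamma_\infty\backslash\Gamma_0(4|g|)$ as a sum over lower rows $(c,d)$ with $c>0$ and $\gcd(c,d)=1$, applies the identity $e(\mu\gamma\tau) = e(\mu a/c)e(-\mu/(c^2(\tau+d/c)))$, and then uses the integral representation
\[
\int_{-\infty}^{\infty} (\tau+d/c-ix)^{-\frac 32} e\!\left(\tfrac{-\mu}{c^2(\tau+d/c-ix)}-n(\tau-ix)\right)dx
\]
to extract the Bessel function $I_{\frac 12}(\pi\sqrt{5n}/(Nc))$ with the $(-n/5)^{1/4}$ normalization. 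Summing over $d\pmod{c}$ with the twist from $\psi_g$ produces the Kloosterman sum $K_{\frac 32,\psi}(-5,n,4Nc)$ as in \cref{Kloost}. The factor $(1+\delta_{\rm odd}(Nc))/c$ comes from tracking the Kohnen projection: when $|g|$ is odd and $c$ is odd, the four slash-operator terms in the projection formula collapse to a single additional contribution on top of the $\tfrac 13 f$ term, doubling the summand; in all other cases only the trivial summand survives. A careful bookkeeping of the normalization constant, together with the overall factor of $6$, produces the stated $\tfrac{-3\pi}{N}$ prefactor.

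Finally, I would verify that each $\Fwh(\tau)$ is genuinely weakly holomorphic, i.e.\ that its shadow vanishes. By \Cref{shadow} the shadow of $\Fwh$ lies in $S^+_{\frac 12}(\Gamma_0(4|g|),\overline{\psi_g})$. For each rational conjugacy class $[g]\notin\{21A,30AB\}$, one can compute this cusp form space directly (by Serre--Stark it is spanned by explicit theta series, and the plus-space condition together with the multiplier $\overline{\psi_g}$ often forces it to be zero). Since the shadow is paired against the principal part at $\infty$ by a standard Petersson-inner-product argument, and since $R^{[-5]}_{\frac 32,4|g|,\psi_g}$ has prescribed polar behavior only at $\infty$ (and $\tfrac 1{2|g|}$ after projection), the shadow is determined by these principal parts, and the case-by-case verification reduces to a finite computation. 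Once the shadow is shown to vanish, $\Fwh\in M^{+,!}_{\frac 32}(\Gamma_0(4|g|),\psi_g)$ as claimed, and the Fourier expansion derived above is the stated one.

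The main obstacle is the last step: the case-by-case verification that the shadow vanishes for each of the roughly forty-odd rational classes $[g]\notin\{21A,30AB\}$. This requires either explicit dimension computations for $S^+_{\frac 12}(\Gamma_0(4|g|),\overline{\psi_g})$ or a direct check that the shadow, which is a specific linear combination of theta series whose Fourier coefficients can be computed from the Kloosterman-sum expression, is the zero form. The precise reason $21A$ and $30AB$ are excluded is exactly that this vanishing fails (or cannot be verified by these techniques), motivating the separate eta-quotient construction of \Cref{sec:eta}.
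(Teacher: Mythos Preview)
Your outline is essentially correct and follows the same structure as the paper's proof, but two points deserve comment.

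First, for the shadow-vanishing step the paper takes a shorter route than you propose. Rather than invoking Serre--Stark or a Petersson pairing argument, the paper simply observes that $\xi_{\frac 32}(\Fwh)\in S_{\frac 12}(\Gamma_0(4|g|),\overline{\psi_g})\subset S_{\frac 12}(\Gamma_0(4|g|h_g))$ (the inclusion coming from the fact that $\psi_g$ has order dividing $h_g$), and then a direct \texttt{MAGMA} dimension computation shows that the larger space $S_{\frac 12}(\Gamma_0(4|g|h_g))$ with trivial character is zero-dimensional for every $g\in\Th$. This kills the shadow in one stroke, with no case-by-case analysis of theta series or principal parts. Your approach would also work, but the embedding-into-trivial-character trick is cleaner and worth knowing.

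Second, your final paragraph misidentifies the reason $21A$ and $30AB$ are excluded. The paper's dimension computation in fact shows the shadow vanishes for \emph{all} $g\in\Th$, so the Rademacher-sum construction would go through for those classes as well. The paper uses eta-quotients for $21A$ and $30AB$ purely as a matter of convenience (see the opening sentence of \S\ref{sec:eta} and the Remark after \Cref{FwhAB}), not because the shadow argument breaks down there.
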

\begin{proof}
    We have already established that each $\Fwh(\tau)$ is a mock modular form of weight $\frac 32$ for the group $\Gamma_0(4|g|,\psi_g).$ {It suffices to show that $\Fwh(\tau)$ has vanishing shadow. By \Cref{shadow}, the space of possible shadows is $M_{\frac12}(\Gamma_0(4|g|),\overline{\psi_g})\subset M_{\frac12}(\Gamma_0(4|g|h_g)).$ By the Serre-Stark basis theorem \citep{SerreStark} (see also, \citep[Theorem 1.45]{KenOnoWebofModularity}), the (a priori) larger space is generated by theta functions of the form $\theta_{\chi}(k\tau)=\sum_{n\in \Z} \chi(n)q^{kn^2},$ where $\chi$ is an even Dirichlet character and $\chi$ and $k$ depend on $N=|g|.$ We compute the Bruinier-Funke pairing \citep{BF04} to deduce that \[\{\widehat{\Fwh(\tau)},\theta_{\chi}(k\tau)\}\not=0\]
    if and only if $k=5.$ Thus, the only way to get non-vanishing shadow would be if the shadow is a constant multiple of $\theta_{\chi}(5\tau)$ but by \Cref{lemma:Rad}, the shadow is in fact a constant multiple of the Rademacher sum $R^{[+5]}_{\frac 12,4N,\overline{\psi_g}}(\tau).$ Since they transform under $\Gamma_0(4|g|)$ with different characters, this is not possible. }
    
    Thus, each $\Fwh(\tau)$ {has vanishing shadow and} is in fact weakly holomorphic. Computing the coefficients of Rademacher sums in terms of Kloosterman sums and Bessel functions is a standard computation, see for example \citep{CDrsums} (or Proposition 2.7 in \citep{DuncanMertensOno-OnanArxiv}.)
\end{proof}
\subsection{Using Eta-Quotients.}\label{Fwheta} 
For $[g] \in \{21A, 30AB\},$ we will use the eta-quotients $E_{|g|}(\tau)$ in \Cref{magicetas} to compute spaces of weakly holomorphic forms of the desired weight and level. The main result of this section is the following lemma.  
    \begin{lemma}\label{FwhAB}
        For $m \in \{21,30\},$ there exists a weakly holomorphic modular form with integer Fourier coefficients in $ M^{!,+}_{\frac 32}(\Gamma_0(4m, \psi_{mA}))$ which has a pole of order 5 at $\infty$, a pole of order $\frac 54$ at the cusp $\frac 1 {42}$ if $m=21,$ and vanishes at all other cusps. 
    \end{lemma}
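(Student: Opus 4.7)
The plan is to construct each $f^{wh}_{mA}$ explicitly as a product $F(\tau)\cdot\phi(\tau)$, where $F(\tau)\in M^{!,+}_{3/2}(\Gamma_0(4))$ is the level-$4$ weakly holomorphic form introduced before \Cref{Ell19} (with $F=6q^{-5}+O(q)$), and $\phi(\tau)=\prod_{\delta\mid 4m}\eta(\delta\tau)^{r_\delta}$ is a weight-zero eta-quotient tailored so that $F\cdot\phi$ lands in $M^{!,+}_{3/2}(\Gamma_0(4m),\psi_{mA})$ with the required pole structure. Viewed as a form on $\Gamma_0(4m)$, the pullback of $F$ has trivial character, a pole of order $5$ at $\infty$, and potentially nontrivial behavior at the cusps of $\Gamma_0(4m)$ lying above $0$ and $1/2$ of $\Gamma_0(4)$; the role of $\phi$ is to twist the character to $\psi_{mA}$ while killing those extra poles.

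Requirement on the character becomes, via \Cref{etamodulartransformation}, a linear congruence system in the integer exponents $(r_\delta)_{\delta\mid 4m}$ (involving the Dedekind sums $s(-d,c)$ and the $24$th-root-of-unity factors $e((a+d)/24c)$), while the pole-cancellation requirement translates into inequalities on the vanishing orders of $\phi$ at each cusp of $\Gamma_0(4m)$, given by the standard formula for eta-quotients. For each $m\in\{21,30\}$ the combined finite system admits a solution, which I would verify by a direct computer search (e.g.\ in \texttt{MAGMA}, analogous to the code accompanying \citep{magiceta}). For $m=21$ (odd), I would then apply the projection operator $|\pr$ of \Cref{plusspace}: by that lemma, the projection preserves the pole of order $5$ at $\infty$, creates a pole of order $5/4$ at $1/42$, and leaves vanishing at all remaining cusps, producing an element of $M^{!,+}_{3/2}(\Gamma_0(84),\psi_{21A})$ with exactly the stated pole structure. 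For $m=30$ (even), the plus-space condition on $F\cdot\phi$ can be verified directly from its Fourier expansion, in analogy with \Cref{evenrad}, without any projection. Integrality of the Fourier coefficients follows from the integer $q$-expansion of each $\eta(\delta\tau)^{r_\delta}$ and of $F(\tau)$, with a final scalar normalization ensuring that the principal part at $\infty$ is exactly $6q^{-5}$.

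The principal technical obstacle is the construction of $\phi$: imposing the character $\psi_{mA}$, weight zero, and prescribed vanishing orders at all cusps of $\Gamma_0(4m)$ simultaneously is a finite but not \emph{a priori} solvable constraint-satisfaction problem, and its resolution relies on an explicit case-by-case computation for $m=21$ and $m=30$. Once $\phi$ is in hand, the stated Fourier expansions $6q^{-5}-2q^4+4q^7-8q^8+O(q^{11})$ and $6q^{-5}+3q^3+3q^8-3q^{11}+O(q^{12})$ can be read off term by term from the $q$-expansion of $F\cdot\phi$ (possibly after the projection $|\pr$), and uniqueness of the form with the specified polar data then follows from the finite dimensionality of the subspace of $M^{!,+}_{3/2}(\Gamma_0(4m),\psi_{mA})$ having poles of bounded order at $\infty$ and at $1/(2m)$ and vanishing elsewhere.
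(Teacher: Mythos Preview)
Your approach has a genuine gap at its core. A weight-zero eta-quotient is a modular function, so its divisor on $X_0(4m)$ has degree zero; since eta-quotients are non-vanishing on $\HH$, this divisor is supported entirely on the cusps. If $\phi$ is to be regular at $\infty$ (so as to preserve the pole order $5$ of $F$) and to have zeros at every cusp of $\Gamma_0(4m)$ lying over $\tfrac12\in X_0(4)$ (where $F$, being in the plus space at level $4$, has its second pole), then $\phi$ must acquire compensating poles at other cusps---and you would then need $F$ to vanish there to sufficiently high order so that $F\cdot\phi$ remains bounded. None of this is checked. On top of that you require $\phi$ to realize the specific nontrivial multiplier $\psi_{mA}$ in weight zero, a further constraint whose satisfiability you have not verified. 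You flag all of this as the ``principal technical obstacle'' and propose a computer search, but nothing rules out the search space being empty, in which case the argument collapses. A secondary problem: your plus-space claim for $m=30$ appeals to \Cref{evenrad}, which is about Rademacher sums, not products; multiplying a plus-space form by a generic eta-quotient does not preserve the congruence condition on Fourier exponents.

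The paper avoids these uncertainties by working in the opposite direction. It computes a basis for the higher-weight cusp space $S_k(\Gamma_0(4m),\psi_{mA})$ (with $k=\tfrac{15}{2}$ for $m=21$ and $k=\tfrac{11}{2}$ for $m=30$), and then \emph{divides} each basis element by the holomorphic eta-quotient $E_m(4\tau)$ of \Cref{magicetas}, which vanishes only at $\infty$. This produces a spanning set of forms in $M^!_{3/2}(\Gamma_0(4m),\psi_{mA})$ whose only possible pole is at $\infty$, from which the desired form is extracted after projection to the plus space. The higher-weight cusp spaces are themselves computed by multiplying by $\vartheta$ and using that the resulting integer-weight space is spanned by eta-quotients. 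Every step here is guaranteed to succeed and is effectively computable, whereas your multiplicative construction rests on an unverified (and structurally delicate) feasibility claim.
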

Note that if \Cref{FwhAB} is true, such a form satisfies all properties of \Cref{MainProp}, and we can thus define $\FwhA(\tau)$ (resp. $\FwhB(\tau)$) to be the unique such form with Fourier expansion $6q^{-5}-2 q^{4}+4 q^{7}-8 q^{8}+O(q^{11})$ (resp.  $6q^{-5} + 3q^3 + 3q^8-3q^{11} + O(q^{12})$)

    \begin{remark}
        We could have written an analogous statement for each rational conjugacy class $[g]$ in the Thompson group and forgone the discussion about {\Rsums} completely. This would not affect the proof of \Cref{thm:main} at all. However, we need an expression for $\Fwh(\tau)$ for certain classes $[g]\not\in\{21A,30B\}$ in terms of {\Rsums} for the application to elliptic curves. In particular, such expressions for $g\in\{14A,19A\}$ play key roles in the proofs of \Cref{Ell19,Ell14}. {On the other hand, for $g\in \{21A, 30AB\},$ the analogue of the sum in \Cref{A} converges very slowly, so it is more convenient to use eta-quotients instead of Rademacher sums in these cases.}%, because the $q^4$ Fourier coefficient of $R^{[-5]}_{\frac 32,84, \psi_{21A}}(\tau),$ for example, is $\approx-0.37368835251224884511050769.$ For Proposition 3.1 to be true, the Fourier coefficient of $\mathcal{F}^{\lambda}_{21A}(\tau)$ has to be an integer. While it is possible to let $f_g^{wh}=R^{[-5]}_{\frac 32,84, \psi_{21A}}(\tau)$ and add the correct multiple of the cusp form $f^{(1)}_{21A}(q)=q^4+O(q^{11}),$ this would make our $n_g$ and $m_g$ (cf. \Cref{MainProp} and \Cref{ngmg}) non-integral. For ease of exposition, we have taken the eta-quotient route.}
    \end{remark}

\begin{proof}[Proof of \Cref{FwhAB}]
    By \Cref{magicetas}, for each $N>0$ we can construct an eta-quotient $E_{N}(\tau)$ that vanishes only at the cusp $\infty.$ We use $N=21$ and $N=30$ to get eta-quotients: 
    \begin{equation}
        E_{21}(\tau)= \frac{\eta(\tau)\hspace{1pt}\eta(21\tau)^{21}}{\eta(3\tau)^3\hspace{1pt}\eta(7\tau)^7} ~\text{   and   }~ E_{30}(\tau)= \frac{\eta(2\tau)^2\hspace{1pt} \eta(3\tau)^3\hspace{1pt}\eta(5\tau)^5\hspace{1pt}\eta(30\tau)^{30}}{\eta(\tau)\hspace{1pt}\eta(6\tau)^6\hspace{1pt} \eta(10\tau)^{10}\hspace{1pt}\eta(15\tau)^{15}}
    \end{equation}
    of weight 6 and 4, respectively. Now, consider the cusp form space $S_{\frac{15}{2}}(\Gamma_0(84), \psi_{21A}), $ and suppose for now that we can compute a basis for this space explicitly. If so, we can divide each element of the basis by $E_{21}(4\tau)$ to get a generating set $B_{21}$ of forms in $M^!_{\frac 32}(\Gamma_0(84), \psi_{21A})$ whose only (possible) pole is at the cusp $\infty.$ Then, we apply the projection operator to each element of $B_{21}$ to get a generating set of forms in $M^{!,+}_{\frac 32}(\Gamma_0(84), \psi_{21A})$ that are holomorphic away from the cusps at $\infty $ and $\frac{1}{42}$. This generating set turns out to be non-empty.
    
    We can now construct $\FwhA(\tau)$ as a suitable linear combination of elements of this space determined completely by its Fourier expansion $6q^{-5}-2 q^{4}+4 q^{7}-8 q^{8}+O(q^{11}).$
    The same argument works for $\FwhB(\tau)$ if we start with $S_{\frac{11}{2}}(\Gamma_0(120), \psi_{30AB})$ instead. 
    Here again, the set of forms in $M^{!,+}_{\frac 32}(\Gamma_0(120), \psi_{30AB})$ which are holomorphic away from the cusp at $\infty$ turns out to be non-empty. 

     We now describe how to compute the bases for $S_{\frac{15}{2}}(\Gamma_0(84), \psi_{21A})$ and $S_{\frac{11}{2}}(\Gamma_0(120), \psi_{30AB})$ in some detail. We will essentially follow the method described in Proposition 3.1 of \citep{MichealsGriffinMertensThompson}. Let $(m,k) \in \{(21, \frac{15}{2}),  (30, \frac{11}{2})\}.$ Let $f \in S_{k}(\Gamma_0(4m)), \psi_{mA})$ and let \be\vartheta(\tau):=\sum_{n\in \Z}q^{n^2}\in M^{+}_{\frac12}(\Gamma_0(4)).\ee Then $f \vartheta$ lies in $M_{k+\frac{1}{2}}(\Gamma_0(4m), \psi_{mA})\subset M_{k+\frac{1}{2}}(\Gamma_0(4m h_g)).$ Using programs (available at \url{http://users.wfu.edu/rouseja/eta/}) written by Rouse and Webb  one can verify that the space $M_{k+\frac{1}{2}}(\Gamma_0(12m))$ is generated by eta quotients ($h_g=3,$ for both values of $m$). Since we can explicitly compute Fourier expansions of $(g|_{k} \gamma)(\tau)$ for any eta-quotient $g(\tau)$ and $\gamma \in \SLZ$ (see \Cref{sec:eta}), we can thus compute a basis for $M_{k+\frac{1}{2}}(\Gamma_0(4m), \psi_{mA})$ and hence for $S_{k}(\Gamma_0(4m)), \psi_{mA}).$ Alternatively, we can also compute a basis for $M_{k+\frac{1}{2}}(\Gamma_0(4m), \psi_{mA})$ using in-built functions in PARI/GP \citep{PARI2}. This concludes the proof of \Cref{FwhAB}.
\end{proof}
     
We now have an explicit description of $\Fwh(\tau) $ for each rational conjugacy class $[g].$ The next steps in the proof are showing that each $\Fwh(\tau)$ (and thus each $\Fgt$) satisfies all properties listed in \Cref{MainProp} (we do this in \Cref{cusp}), and that the only cusp forms we can add for \Cref{thm:main} to be true are appropriate integer multiples of the elements of $S_g$ for each $g \in Th,$ respectively (cf. \Cref{MainProp}). The latter will follow from our work in \Cref{sec:I}. 
\subsection{Cusp forms}\label{cusp}
We begin by noting that the weakly holomorphic forms $\Fwh(\tau)$ described in \Cref{Fwh,Fwheta} satisfy the properties $(a)-(c)$ listed in \Cref{MainProp}. Also, said properties uniquely determine a weakly holomorphic form up to cusp forms \citep[Lemma 2.4]{DuncanMertensOno-OnanArxiv}. Thus to specify the functions $\Fgt$ completely we have to compute the cusp form spaces $S^{+}_{\frac 32}(\Gamma_0(4|g|), \psi_g)$ for each $[g]$ in $Th.$ 
    \begin{lemma}
        For each rational conjugacy class $[g]$ of $Th,$ the corresponding cusp form space $S_g$ is spanned by the cusp forms given in \Cref{fcoeff}.
    \end{lemma}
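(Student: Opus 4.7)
The plan is to mimic the approach used in the proof of \Cref{FwhAB}, where cusp forms were handled by multiplying through by the Jacobi theta series $\vartheta(\tau)=\sum_{n\in\Z}q^{n^2}\in M^{+,!}_{1/2}(\Gamma_0(4))$. Fix a rational conjugacy class $[g]$ of $Th$. If $f\in S_{3/2}(\Gamma_0(4|g|),\psi_g)$, then $f\vartheta\in S_2(\Gamma_0(4|g|),\psi_g)$, since $\vartheta$ is holomorphic at every cusp while $f$ vanishes there. In particular $f\vartheta$ lies in the integer-weight space $S_2(\Gamma_0(4|g|h_g))$, whose basis can be computed explicitly from dimension formulas using e.g. the eta-quotient generators of Rouse--Webb or the built-in routines of PARI/GP, exactly as in the last step of the proof of \Cref{FwhAB}.

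Once such a basis is in hand, first extract the subspace transforming with character $\psi_g$ under $\Gamma_0(4|g|)$ by averaging (or testing character values on a set of coset representatives). Then within this isotypic subspace, identify the forms divisible by $\vartheta$: since $\vartheta$ is non-vanishing on $\HH$, divisibility is equivalent to a vanishing condition on the Fourier expansions at each cusp, which can be checked to sufficiently high order using \Cref{etamodulartransformation}. The resulting quotient space is $S_{3/2}(\Gamma_0(4|g|),\psi_g)$. Finally, apply the sieving projector $|\pr$ of \Cref{subsecproj} to land in $S_g=S^{+}_{3/2}(\Gamma_0(4|g|),\psi_g)$, compute Fourier expansions and rescale so that the leading coefficients yield an integer-coefficient canonical basis, and compare against the entries of \Cref{fcoeff}.

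The main obstacle, which as in \citep{MichealsGriffinMertensThompson} is really a bookkeeping matter rather than a conceptual one, is the careful handling of the multiplier system $\psi_g=\psi_{4|g|,v_g,h_g}$ for each class: since $\psi_g$ is not in general trivial on $\Gamma_0(4|g|)$, one must enlarge the level to $4|g|h_g$ to work with trivial-character integer-weight spaces, and then descend. A secondary check needed is that no cusp forms are missed: this is confirmed by comparing the dimension of the recovered space with the half-integral-weight dimension formula (or equivalently, with the Kohnen--Shimura correspondence predicting $\dim S^{+}_{3/2}(\Gamma_0(4|g|),\psi_g)$ in terms of $\dim S_2$ of an associated level), so that we know the tabulated forms span and do not merely generate a proper subspace of $S_g$.
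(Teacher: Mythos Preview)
Your proposal is correct and follows essentially the same approach as the paper: multiply by $\vartheta$ to pass to the integer-weight space $M_2(\Gamma_0(4|g|h_g))$, compute that space via the Rouse--Webb eta-quotient generators, isolate the $\psi_g$-isotypic subspace, and then recover $S_g$ by dividing out $\vartheta$ and projecting to the plus space. The paper's proof is terser and omits your explicit divisibility and dimension checks, but the method is the same.
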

\begin{proof}
    We use the same method as in the proof of \Cref{FwhAB} to compute the cusp form spaces. Let $f\in S_\frac 32^+(4|g|,{\psi_g})$ be any cusp form. Then $f \vartheta$ lies in $M_2(4|g|,{\psi_g})\subset M_2(4|g| h_g),$ where the larger space is spanned by eta quotients for each $[g].$ This can be verified using \texttt{MAGMA} code written by Rouse and Webb \citep{magiceta}. We can then use the modular properties of the eta-quotients and the projection onto the plus-space to determine a basis for $M_2(4|g|,{\psi_g})$ and hence $S_g= S_\frac 32^+(4|g|,{\psi_g}).$ The space $S_g$ turns out to be trivial for every \be[g] \not \in \{12D, 14A, 18B, 19A, 20A, 21A, 24AB, \allowbreak 24CD, 28A, 30AB, 31AB, 39AB\}. \ee The Fourier coefficients given in \Cref{fcoeff} are enough to determine each $\Fcf(\tau)$ completely for all other rational conjugacy classes $[g].$ 
\end{proof} 
\subsection{Integer Coefficients.}
    The last thing we need to check in order to prove \Cref{MainProp} is that the functions \be\Fgt=\Fwh(\tau)+(\lambda_g M_g+n_g)\cdot f_g(\tau)  \ee constructed in the preceding section have integer coefficients. We will use Sturm's theorem \citep{Sturm} for this. Note that each of these functions lies in $M^{+,!}_{\frac 32}(\Gamma_0(4|g|), \psi_{g})\subset M^{+,!}_{\frac 32}(\Gamma_0(4|g|h_g)),$ thus if $\nu(\tau)=q^5+O(q^8)$ is a cusp form with integer coefficients in $S^{+}_{2k-\frac{3}{2} }(\Gamma_0(4|g|h_g))$, then, $\Fgt\nu(\tau) $ lies in $M_{2k}(\Gamma_0(4|g|h_g)),$ so we can apply Sturm's theorem to it. Thus, $\Fgt$ has integer coefficients if the first $\frac{k}{6}[\SLZ:\Gamma_0(4|g|h_g)]$ coefficients of $\Fgt\nu(\tau)$ are integers. The largest bound we have to check is less than 1200. The author used PARI/GP \citep{PARI2} to do this computation. 
    
    This concludes the proof of \Cref{MainProp}.
%%%%%%%%%%%%%%%%%%% SECTION 4 INTEGER MULTIPLICITIES %%%%%%%%%%%%%%%%%%%%%%%%%%%%%%%%%%
\section{Proof of Theorem \ref{thm:main}: Integer Multiplicities}\label{sec:I}
To prove \Cref{thm:main}, we have to show that the $\Fgt$'s we described in Section \ref{secfunc} are indeed the McKay--Thompson series of a virtual module of the Thompson group.  
 
This is equivalent to proving that there exist integers $m^{\lambda}_1(n),...,m^{\lambda}_{39}(n)$ such that if $\Fgt=6q^{-5}+\sum_{n\geq 3} \alpha_g^\lamda(n)q^n,$ then for each $n\geq 3$ the Fourier coefficient $\alpha_g^\lamda(n)$ can be written in the form,
    \begin{equation}\label{eq:mults}
        \alpha^{\lambda}_g(n)=\sum_{j=1}^{39} m^{\lambda}_j(n)\chi_j(g),
    \end{equation}
where $\chi_1,\dots,\chi_{39}$ are the  irreducible rational characters of $Th$ (See \Cref{Prelim:ratchar} for a definition of rational character). We say that the function $\omega^{\lamda}_{n}:Th\rightarrow \C,$ defined by $g  \mapsto  \alphg(n)$, is a \textit{virtual rational character} of $Th$ if the above condition is satisfied. Thus, the goal of this section is to prove that $\omega^{\lamda}_{n}$ is a virtual rational character of $Th$ for every $n\geq 3$ and choice of $\lambda \in \Lambda.$ 
    
As explained in \citep{MichealsGriffinMertensThompson}, this is computationally infeasible to prove directly using only Sturm bounds \citep{Sturm}. However, it can be reduced to a finite computation using a variant of Thompson's reformulation (see, for example, \citep{Smith}) of Brauer's characterization of generalized characters. (For another example of a similar computation, see \citep{Gannon}.) To state the result, we first have to define a few things. 
    
For the rest of this section, let $G$ be a finite group and $p$ a fixed prime dividing the order of $G$. Let $\mathscr{C}_G$ denote the set of all rational conjugacy classes of $G.$ We call $[g] \in \mathscr{C}_G$  $p$-\emph{regular} if the order of $g$ is coprime to $p$. Let $K_p$ denote the set of all $p$-regular classes in $G$ whose centralizer in $G$ has order divisible by $p.$  For a fixed $[g] \in K_p$, we will let $\alpha$ denote the highest power of $p$ dividing the order of the centralizer of $g$ in $G$. (This $\alpha$ should not be confused with the $\alpha^\lambda_g(n)$ of \cref{eq:mults}.)
    
Let $h \in G $ be any element in $G$ and let $|h|=n=p^k m$ where $k\geq 0$ and $(p,m)=1.$ Then we can write $h$ as a product $h=ab,$ where $a$ and $b$ commute and $a$ has order $m.$ (Both $a$ and $b$ can be expressed as powers of $h$.) We call $a$ the $p$-\emph{regular part} of $h.$ We note here that if $h'\in G$ is in the same rational conjugacy class as $h,$ then their corresponding $p$-regular parts $a'$ and $a$ are also in the same rational conjugacy class, i.e if $h'\in[h]$ then $a'\in [a]$. 
This allows us to make the following definition.
    \bd 
    For a fixed $p$ and $[g] \in K_p$ as above, the $p$-\emph{regular section} $\Spg_{p,g}$ of $[g]$ is the set of rational conjugacy classes $[h] \in \mathscr{C}_G$ such that the $p$-regular part of $h$ lies in $[g].$
    \ed
For $G=Th, $ and for each prime $p$ dividing $|G|,$ \Cref{tab:cong1} lists the rational conjugacy classes $ [g]$ in $K_p,$ along with their $p$-regular section $\Spg_{p,g}$ and the highest power $\alpha$ such that $p^{\alpha}$ divides the order of the centralizer $C_G(g)$.
    
For a fixed group $G,$ and prime $p$ dividing $|G|$, fix a rational conjugacy class $[g] \in K_p.$ 
Let $\Z_{(p)}=\{\frac{a}{b}\colon a,b\in\Z, p\nmid b \}$ denote the localization of $\Z$ at the prime ideal $(p)$, and let $I:=p^\alpha \Z_{(p)}.$ We define $m:=|\Spg_{p,g}|,$ and let $M_{p,g}$ denote the set of all $m$-tuples  $(l_1,l_2,\dots, l_m)\in \Z^{\oplus m}_{(p)}$ such that  \be\sum^m_{i=1} l_i \chi\left([h]_i\right) \equiv 0 \pmod{I^m}\ee for all irreducible rational characters $\chi$ of $G$ and all rational conjugacy classes $[h]_i$ in $ \Spg_{p,g}.$
    
We are now ready to state the following important lemma. 
    \begin{lemma}\label{LemmaSmith} %
        Assuming the above notation, an integer-valued class function $c : G\to\Z$ of $G$ is a virtual rational character of $G$ if and only if for all primes $p$ and rational conjugacy classes $[g]$, \be\sum^m_{i=1} l_i c\left([h]_i\right) \equiv 0 \pmod{I^m}\ee for all $(l_1,l_2,\dots, l_m)\in M_{p,g}.$
    \end{lemma}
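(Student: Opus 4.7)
The statement is Thompson's reformulation of Brauer's characterization of generalized characters, and a full proof can be found in Smith \citep{Smith}; I sketch the plan here.

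\textbf{Necessity.} This direction is immediate from $\Z$-linearity of the $\chi_j$. Writing $c = \sum_{j=1}^{39} n_j \chi_j$ with $n_j\in\Z$, for any prime $p$, any $[g]\in K_p$, and any $(l_1,\ldots,l_m)\in M_{p,g}$ one computes
\[
\sum_{i=1}^m l_i\, c([h]_i) = \sum_{j=1}^{39} n_j\Bigl(\sum_{i=1}^m l_i\,\chi_j([h]_i)\Bigr) \in I^m,
\]
since each inner sum lies in $I^m$ by the definition of $M_{p,g}$.

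\textbf{Sufficiency.} Since the irreducible rational characters have Schur index $1$ for $Th$, they form a $\Z$-basis of the lattice of virtual rational characters; hence it suffices to show that the rationals $n_j := \langle c, \chi_j\rangle/\langle \chi_j,\chi_j\rangle$ are integers. Using $\Z = \bigcap_{p} \Z_{(p)}$, I would fix a prime $p\mid |G|$ and show $n_j\in \Z_{(p)}$ for every $j$. The main tool is Brauer's congruence: if $h = ab$ is the $p$-regular decomposition of $h$ with $a$ the $p$-regular part, then $\chi(h) \equiv \chi(a) \pmod{p^{\alpha}}$ for every ordinary character $\chi$, where $p^{\alpha}$ is the exact power of $p$ dividing $|C_G(a)|$. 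Consequently, for each $[g]\in K_p$, the vectors $\bigl(\chi_j([h]_i)\bigr)_{i=1}^m$ for $j = 1,\ldots,39$ span a $\Z_{(p)}$-sublattice $\Lambda_{p,g}\subseteq \Z_{(p)}^{m}$ whose annihilator modulo $I^m$ is, by construction, exactly $M_{p,g}$.

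By hypothesis the vector $\bigl(c([h]_i)\bigr)_{i=1}^m$ then lies in $\Lambda_{p,g} + I^m\Z_{(p)}^m$ for every $[g]\in K_p$. Running over all $p$-regular classes in $K_p$, and using Brauer's congruence once more to control the character values on $p$-singular classes in terms of their $p$-regular parts, one reconstructs $c$ as a $\Z_{(p)}$-linear combination of $\chi_1,\ldots,\chi_{39}$, yielding $n_j\in\Z_{(p)}$. Letting $p$ range over the primes dividing $|G|$ then gives $n_j\in\Z$, completing the proof. The main obstacle is making the duality between $\Lambda_{p,g}$ and $M_{p,g}$ completely precise and verifying that the local $p$-integrality assembles consistently from the individual $p$-regular sections; this combinatorial bookkeeping is the content of \citep{Smith}, whose argument I would follow.
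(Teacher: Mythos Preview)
Your proposal is correct and matches the paper's approach exactly: the paper's entire proof is the single sentence ``This is a direct application of \citep[Theorem 1.1]{Smith},'' and you likewise defer to Smith after sketching the necessity direction and the $p$-local duality idea behind sufficiency. One small cosmetic point: your sketch hardwires ``$39$'' and ``Schur index $1$ for $Th$,'' which are specific to $G=Th$, whereas the lemma is stated for a general finite group $G$; this does not affect correctness since you are ultimately citing Smith, but you may want to phrase the sketch group-agnostically.
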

\begin{proof}
    This is a direct application of \citep[Theorem 1.1]{Smith}.
\end{proof}
    
\Cref{LemmaSmith} reduces the problem of checking whether the multiplicities are integral to a $p$-local computation. We illustrate this with an example.
    \begin{example}
    Let $p=19.$ Then, $K_{19}=\{1A\}$ and $\Spg_{19,1A}=\{1A, 19A\}.$ We have $\alpha=1,$ and $M_{19,1A}$ is the set of ordered pairs $(x,y) \in \Z_{(19)}^{\oplus 2}$ such that \be x\chi(1A)+y\chi(19A)\equiv 0 \pmod{19}\ee for each irreducible rational character $\chi$ of the Thompson group. Plugging in values for $\chi(1A)$ and $\chi(19A),$ we find that $M_{19,1A}=\{(x,y) \in \Z_{(19)}^{\oplus 2}~|~ x+y\equiv 0 \pmod{19}\}.$ So in order to prove that $\omega^{\lamda}_{n}:Th \to \C$ is a virtual rational character for each $\lambda \in \Lambda$ and $n \in \Z,$ we need to check that for each $(x,y)\in M_{19, 1A},$  \be x\alpha^\lambda_{1A}(n)+y\alpha^\lambda_{19A}(n)\equiv 0 \pmod{19}\ee where $\Fgt=6q^{-5}+\sum_{n\geq3}\alphg(n)$ (cf. \cref{eq:mults}.) Thus, we have to show that the following congruence is satisfied for every $n \in \N$ and $\lambda\in \Lambda$ \be\label{4.2} \alpha^\lambda_{1A}(n)-\alpha^\lambda_{19A}(n)\equiv 0 \pmod{19}.\ee 
   
    This is a doubly infinite set of congruences (for each fixed $\lamda\in \Lambda$, we have a congruence for every integer $n$), but we can get rid of the dependence on $\lambda$ as follows: Note that $\alpha^\lambda_{1A}(n)$ is independent of $\lambda$ since the cusp form space $S_{1A}$ is empty so we can write $a_{1A}(n)$ for $\alpha^\lambda_{1A}(n)$ (see \Cref{MainProp} for notation). Also by  \Cref{MainProp}, $\alpha^\lambda_{19A}(n)=a_{19A}(n)+(m_{19A}\lamda_{19A}+n_{19A}) b_{19A}(n)$ where $b_{19A}(n)$ is the $n^{th}$ coefficient of $f_{19A}(\tau) \in S_{19A}.$ From \Cref{ngmg}, $m_{19A}=18$ and $n_{19A}=19,$ so checking \cref{4.2} reduces to checking that 
        \be
            a_{1A}(n)=a_{19A}(n)+18b_{19A}(n) \pmod{19}
        \ee 
    for all $n\in N.$
    \end{example}
We can do the same thing for every pair $(p, [g])$ where $[g]\in K_p,$ and get a list of congruences that we need to check in order to show that the function $\omega^{\lamda}_{n}$ is a virtual rational character in every case. An inspection of \Cref{ngmg} and \Cref{fcoeff} confirms that we can always get rid of the dependence on $\lambda$. This still isn't a finite computation because at the moment, we need to check each congruence for all positive integers $n.$ However, that can be easily resolved in the following way: Let $\upsilon(\tau)$ be the unique cusp form in $S^+_{\frac{37}{2}}(\Gamma_0(4))$ whose Fourier expansion is of the form $q^5 - 56q^8 + O(q^9).$ Then for each $[g]$ and $\lambda,$ $\Fgt \upsilon(\tau)$ is a holomorphic modular form of weight 20 and level $|g| h_g$ so Sturm's theorem \citep{Sturm} applies. Thus, it suffices to check that the congruences hold for the first $M$ Fourier coefficients of the holomorphic modular form where $M$ is the Sturm bound which in the worst case is just shy of 4000. As before, we used \citep{PARI2} to check these. 

We conclude this section with another example of this procedure, for clarity.

\begin{example}
    Let $p=3,$ then $K_3=\{1A, 2A, 4A,4B,5A, 7A, 8A, 8B,10A, 13A \}.$ Pick $[g]=1A.$ Then, $\alpha=10,$ $\Spg_{p,g}=\{ 1A, 3A,3B,3C, 9A,9B,9C, 27A, 27BC \}$ and $M_{3,1A}$ is the set of 9-tuples $(y_1, y_2,\dots,y_9)$ in $\Z^{\oplus9}_{(3)}$ such that $$y_1\chi(1A)+y_2\chi(3A)+\dots+y_9\chi(27BC)\equiv0 \pmod{3^{10}} $$
    for each irreducible rational character $\chi$ of the Thompson group.
As before, in order to prove that $\omega^{\lamda}_{n}:Th \to \C$ is a virtual rational character for each $\lambda \in \Lambda$ and $n \in \Z,$ we need to check that for each $(y_1,\dots,y_9)\in M_{3, 1A}$ we have,   
\be y_1\alpha^\lambda_{1A}+y_2\alpha^\lambda_{3A}+\dots+y_9\alpha^\lambda_{27BC}\equiv0 \pmod{3^{10}}.  \ee
This is easier to manage as a matrix computation. We let X denote the $39\times 9$ matrix $$X=[\chi_i(h)]_{0<i\leq39, h\in R_{p,g}},$$ and let $\mathbf{a}:=(a_{1A}, a_{3A}, a_{3B},\dots, a_{27BC}).$ For each $(y_1,\dots,y_9)\in M_{3, 1A},$ we denote by $\mathbf{y}$ the corresponding column vector whose entries are $y_1,y_2,\dots,y_9.$

Note that for all rational conjugacy classes $[g]$ in $\Spg_{3,1A}$ the corresponding cusp form space $S_g$ is empty, so we can in fact reduce to checking that $\mathbf{a}\cdot \mathbf{y}\equiv 0\pmod{3^{10}}$ for all $\mathbf{y}$ such that $X\mathbf{y}\equiv 0 \pmod{3^{10}}.$ In order to check this, we first compute a basis for the $\Z_{(3)}$-span of the row vectors of $X.$ We can use the \texttt{GAP} \citep{GAP} command \texttt{BaseIntMat} 
to do this computation. It turns out that the $\Z_{(3)}$-span of the row vectors of $X$ is the same as that of the row vectors of the following $9\times9$ matrix: 
\[M:=\left[\begin{matrix}      
      1&    1&    1&    1&    1&    1&    1&    1&    1\\
      0&    9& 1944&   72&    0&   45&   24&    0&   15\\
      0&    0& 2187&    0&    0&   27&    0&    3&    3\\
      0&    0&    0&   81&    0&   27&    0&    0&    0\\
      0&    0&    0&    0&   27&   27&    0&    3&    3\\
      0&    0&    0&    0&    0&   81&    9&    6&   15\\
      0&    0&    0&    0&    0&    0&   27&    0&   18\\
      0&    0&    0&    0&    0&    0&    0&    9&    9\\
      0&    0&    0&    0&    0&    0&    0&    0&   27
  \end{matrix}\right].\]

We can solve $M\mathbf{y}\equiv 0 \pmod{3^{10}}$ for $\mathbf{y}$ and then compute $\mathbf{a}.\mathbf{y}$ modulo ${3^{10}}$ to see that the congruences we need to check are: 
    \small{\begin{align}
        \begin{split}
        a_{1A} - a_{3A} & \equiv 0 \pmod{ 3^2} \\
        a_{1A} - a_{9A} & \equiv 0 \pmod{ 3^3} \\
        7 a_{1A} - 8 a_{3A} + a_{3C} & \equiv 0 \pmod{ 3^4}  \\
        215 a_{1A} - 216 a_{3A} + a_{3B} & \equiv 0 \pmod{ 3^7}  \\
        a_{1A} + 27 a_{3A}- a_{3B}- 27 a_{3C} - 81 a_{9A} + 81 a_{9B} & \equiv 0 \pmod{ 3^8}  \\
        1214 a_{1A} - 1971 a_{3A}+ a_{3B} + 27 a_{3C}  + 81 a_{9A}  - 81 a_{9B} + 729 a_{9C}& \equiv 0 \pmod{ 3^9}  \\
        2591 a_{1A}  - 594 a_{3A} + a_{3B}- 54 a_{3C} + 81 a_{9A} + 162 a_{9B} - 2187 a_{27A} &\equiv 0 \pmod{ 3^9}  \\
        1214 a_{1A} + 216 a_{3A} +  a_{3B} + 27 a_{3C}+ 81 a_{9A} - 81 a_{9B} - 1458 a_{9C}+\dots\\- 2187 a_{27A}+2187 a_{27BC}&\equiv 0 \pmod{ 3^{10}}.  
        \end{split}
    \end{align}}

(Alternatively, we can use GAP to check that any given vector $\mathbf{a}$ is in the aforementioned span, by using the following code: \texttt{IsContainedInSpan(MutableBasis(Integers,M),a mod $3^{10}$).}) 

As in Example 1, we can use PARI \citep{PARI2} to check these congruences up to the Sturm bound, which in this case comes out to be less than $1100.$ 

\end{example}

 We can continue in this manner and check that all multiplicities are integral and hence for each $\Fgt$ as described in \Cref{MainProp}, there exists a virtual $Th$-module $W^\lamda$ such that for each $[g]$ in $Th, $ 
 \be\mathcal{F}^{\lambda}_{g}(\tau)=6q^{-5}+\sum_{\substack{n>0\\ n\equiv0,3 \pmod{4}}}\tr\left(g|W^{\lambda}_{n}\right)q^n \ee
 This proves \Cref{thm:main}.
%%%%%%%%%%%%%%%%%%%%%%%%%%%%%%%%%%%%%%%%%%%%%%%%%%%%% ELLIPTIC CURVES %%%%%%%%%%%%%%%%%%%%%%%%%%%%%%%%%%%%
\section{Elliptic Curves}\label{sec:Appl}

The family of $Th$-modules that we get from \Cref{thm:main} encodes arithmetic information about
quadratic twists of elliptic curves with conductors 14 and 19. This is the content of \Cref{Ell19,Ell14}. We will prove these theorems in this section, but first, we have to develop some background. We recall here some basic notation and facts about traces of singular moduli, which were studied by Zagier in \citep{zagier} and have since been examined extensively.

\subsection{Traces of Singular Moduli}\label{tracesofsingularmoduli} %
Let $\calQ_{D}^{(N)}$ be the set of positive definite quadratic forms $Q=[a,b,c]:=ax^2+bxy+cy^2$ of discriminant $-D=b^2-4ac<0$ such that $N|a$. Then, $\Gamma_0(N)$ acts on $\calQ^N_{D}$ with finitely many
orbits. For $Q=[a,b,c]\in\calQ_{D}^{(N)}$, we denote by $\tau_Q:=\frac{-b+i\sqrt{D}}{2a}$ the unique root of $Q(x,1)$ in the upper half-plane $\HH$. Let $f:\HH\rightarrow\C$ be a function invariant under the action of $\Gamma_0(N),$ and $n\equiv 0,3\pmod{4}$ be a positive integer. Then we can define, 
    \begin{equation}\label{eqtrace}
        \Tr_{D}^{(N)}(f;n):=\sum_{Q\in\calQ_{nD}^{(N)}/\Gamma_0(N)} \chi_{D}(Q)\frac{f(\tau_Q)}{\omega^{(N)}(Q)},
    \end{equation}
where $\omega^{(N)}(Q)$ is the order of the stabilizer of $Q$ in $\Gamma_{0}(N)/\{\pm1\}$ and $\chi_D(Q)$ is the genus character for positive definite binary quadratic forms whose discriminants are multiples of $D$, defined as follows (see for example \citep{reuz}): 
    \be
        \chi_D([a,b,c])=\begin{cases}0 &\text{ if } (a,b,c,D)>1\\
        \left(\frac{D}{r}\right)&\text{ if } (a,b,c,D)=1 \text{ and $Q$ represents $r$ with } (r,D)=1.\end{cases}
    \ee
For $N\in \{14, 19\},$ let $J^{(N,+)}$ be the normalized Hauptmodul for the group $\Gamma^{+}_0(N).$ (We know this exists because the corresponding modular curve $X_0^{(+)}(N)$ has genus 0. See \citep{FMN}, or \citep[Table 5.2]{DuncanMertensOno-OnanArxiv}.)
    \begin{proposition}\label{traces}
        Let $N\in \{14, 19\}$ and let $J^{(N,+)}$ as above. Then,
    \be%
        R^{[-5],+}_{\frac 32,4N}(\tau)=q^{-5}+\frac{-2}{3\sqrt{5}}\sum_{\substack{n>0 \\ n\equiv 0,3\smod 4}} \Tr_5^{(N)}(J^{(N,+)};n)q^n
    \ee
    \end{proposition}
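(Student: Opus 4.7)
The plan is to prove the identity by \emph{uniqueness}: show that both sides are weakly holomorphic modular forms of weight $\frac{3}{2}$ in the Kohnen plus space $M^{!,+}_{\frac{3}{2}}(\Gamma_{0}(4N))$ whose principal parts at every cusp agree, and then conclude the difference is a cusp form in $S^{+}_{\frac{3}{2}}(\Gamma_{0}(4N))$ which we force to vanish.

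For the left-hand side, \Cref{lemma:Rad} together with the plus-space projection results (\Cref{plusspace} and \Cref{evenrad}) put $R^{[-5],+}_{\frac{3}{2},4N}(\tau)$ in $M^{!,+}_{\frac{3}{2}}(\Gamma_{0}(4N))$ with principal part $q^{-5}$ at $\infty$, a pole of order $\frac{5}{4}$ at $\tfrac{1}{2N}$ forced by the plus-space projection when $N$ is odd (relevant for $N=19$), and vanishing at all other cusps. Note that the shadow of $R^{[-5]}_{\frac 32,4N}$ would lie in $S_{1/2}(\Gamma_0(4N))$, which is zero-dimensional, so the Rademacher sum is honestly weakly holomorphic.

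For the right-hand side, one recognizes the trace generating series as the image of the Hauptmodul $J^{(N,+)}$ under a Shintani-type theta lift. By Zagier's original theorem (level $1$) and its extensions to arbitrary level and Fricke-type groups by Bruinier--Funke, Alfes, and Miller--Pixton via explicit regularized theta lifts, the function
\[q^{-5} + \frac{-2}{3\sqrt{5}} \sum_{\substack{n>0\\ n\equiv 0,3\smod{4}}} \Tr_{5}^{(N)}(J^{(N,+)}; n)\, q^{n}\]
lies in $M^{!,+}_{\frac{3}{2}}(\Gamma_{0}(4N))$. The twist by the genus character $\chi_{-5}$ inside $\Tr_{5}^{(N)}$ corresponds to choosing fundamental discriminant $D=-5$ in the lift, and the leading coefficient is normalized to $q^{-5}$, with the factor $\frac{-2}{3\sqrt{5}}$ absorbing the theta-kernel normalization constants. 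The genus-zero property of $\Gamma_{0}^{+}(N)$ for $N\in\{14,19\}$ ensures $J^{(N,+)}$ has a single pole at the Fricke cusp, so the lift has matching principal-part behavior at the cusps of $\Gamma_{0}(4N)$.

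With both sides in the same plus-space and sharing principal parts at all cusps, their difference lies in $S^{+}_{\frac{3}{2}}(\Gamma_{0}(4N))$; one eliminates this using the explicit bases from \Cref{fcoeff} together with a check of finitely many Fourier coefficients up to the relevant Sturm bound. The main obstacle is pinning down the explicit constant $\frac{-2}{3\sqrt{5}}$ and ruling out a spurious cusp-form contribution for $N\in\{14,19\}$. A robust way to handle this is a direct comparison of a few low-order coefficients: compute $a_{g}(3)$ and $a_{g}(4)$ on the left using the Kloosterman--Bessel formula of \Cref{Rademachercoeff}, and match them against explicit evaluations of $J^{(N,+)}$ at CM points of discriminants $-15$ and $-20$ on the right. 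Agreement at enough coefficients both fixes the normalization and forces the cusp-form difference to vanish, completing the identification.
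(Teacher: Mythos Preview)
The paper's own proof is a single sentence: it invokes Corollary~1.3 of \citep{reuz} directly, which already gives the exact identity (including the constant $-\tfrac{2}{3\sqrt{5}}$) between the plus-space Rademacher sum and the generating series of twisted traces of the Hauptmodul. No uniqueness argument, no numerical matching, no Sturm bound check is needed there.

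Your route is different and more labor-intensive: you separately establish that each side lies in $M^{!,+}_{3/2}(\Gamma_0(4N))$, match principal parts, and then kill a possible cusp-form discrepancy by comparing low-order coefficients. This is a legitimate strategy in principle, and it has the pedagogical advantage of being more self-contained. But note that the step where you assert the trace series lies in the plus space with the correct pole structure at \emph{every} cusp of $\Gamma_0(4N)$ is itself the non-trivial content of the results you cite (Zagier, Bruinier--Funke, Alfes, Miller--Pixton); you are effectively re-importing the same body of work that the paper invokes via \citep{reuz}, just less precisely. Moreover, because $S^+_{3/2}(\Gamma_0(4N))$ is one-dimensional for $N\in\{14,19\}$, your final step genuinely does require the numerical coefficient comparison you propose --- the uniqueness argument alone determines the answer only up to a multiple of $f_{14A}$ or $f_{19A}$. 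So your approach works, but it trades one black-box citation for several softer citations plus a finite computation, whereas the paper simply points to a theorem that delivers the identity outright.
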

\begin{proof}
    This is a direct application of Corollary 1.3 of \citep{reuz}.
\end{proof}
In particular, this means that $f^{wh}_{g}(\tau)$ for $o(g)=N\in\{14, 19\}$ is given by 
    \be
        f^{wh}_{g}(\tau)=6q^{-5}+\sum_{0<n} a_{g}(n) q^n=6q^{-5}-\frac{4}{\sqrt{5}}\sum_{\substack{n>0 \\ n\equiv 0,3\smod 4}} \Tr_5^{(N)}(J^{(N,+)};n)q^n. 
    \ee
Writing $f^{wh}_{g}(\tau)$ in the above form turns out to be essential for the proofs of  \Cref{Ell14,Ell19}. We give here another key lemma which we will use in both proofs.
\begin{lemma}\label{zerotraces}
Let $N\in\{14, 19\}$ and let $d<0$ be a fundamental discriminant that satisfies the respective conditions of \Cref{Ell19,Ell14}; then \be\Tr_5^{({N})}(f,|d|)=0\ee
for any $\Gamma_0 {(N)} $ invariant function $f,$ and hence, in particular, for $f=J^{(N,+)}(\tau).$ 
\end{lemma}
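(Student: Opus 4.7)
The plan is to prove the stronger statement that the set $\calQ_{5|d|}^{(N)}$ is itself empty under the hypotheses; the sum in \eqref{eqtrace} is then over the empty set, so the trace vanishes regardless of the $\Gamma_0(N)$-invariant function $f$. The key observation is that any $[a,b,c]\in\calQ_{5|d|}^{(N)}$ satisfies $b^2\equiv -5|d|\pmod{4a}$, and since $N\mid a$, the integer $-5|d|$ must be a quadratic residue modulo every prime power dividing $4a$ that is forced by $N$. I will extract a local obstruction in each case.

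For $N=19$: since $19\mid a$, reducing modulo $19$ forces $-5|d|$ to be a square mod $19$. Quadratic reciprocity gives $\left(\tfrac{5}{19}\right)=\left(\tfrac{19}{5}\right)=\left(\tfrac{4}{5}\right)=1$, so $\left(\tfrac{-5|d|}{19}\right)=\left(\tfrac{5d}{19}\right)=\left(\tfrac{5}{19}\right)\left(\tfrac{d}{19}\right)=\left(\tfrac{d}{19}\right)=-1$ by the hypothesis of \Cref{Ell19}. This contradicts the residue condition, so $\calQ_{5|d|}^{(19)}=\emptyset$.

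For $N=14$: the condition $14\mid a$ gives $8\mid 4a$, so $-5|d|$ must be a square modulo $8$. The Kronecker-symbol hypothesis $\left(\tfrac{d}{2}\right)=1$ forces $d$ to be odd with $d\equiv\pm 1\pmod 8$; combining this with the fact that an odd fundamental discriminant satisfies $d\equiv 1\pmod 4$, I conclude $d\equiv 1\pmod 8$. Then $-5|d|=5d\equiv 5\pmod 8$, which is not a square mod $8$ since the squares mod $8$ are $\{0,1,4\}$. Hence $\calQ_{5|d|}^{(14)}=\emptyset$.

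There is no real obstacle here; the argument is a short, direct congruence computation once the framing via the residue condition $b^2\equiv -5|d|\pmod{4a}$ is in place. The only subtlety is translating the Kronecker symbol $\left(\tfrac{d}{2}\right)=1$ into the congruence $d\equiv 1\pmod 8$ using the fundamental discriminant constraint. I note in passing that the hypothesis $\left(\tfrac{d}{7}\right)=-1$ from \Cref{Ell14}, while essential for the elliptic-curve application, is not needed for this lemma: the obstruction at the prime $2$ alone suffices to empty the set of forms.
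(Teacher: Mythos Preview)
Your proof is correct and follows the same strategy as the paper's: the paper's argument is the one-line assertion that ``$5d$ is not a square modulo $4N$,'' which forces $\calQ_{5|d|}^{(N)}=\emptyset$, and you have supplied the explicit local verification at the primes $19$ and $2$ respectively. Your remark that the hypothesis $\left(\tfrac{d}{7}\right)=-1$ is not actually needed for this lemma is a correct sharpening (indeed $\left(\tfrac{5d}{7}\right)=+1$ under that hypothesis, so no obstruction arises at $7$); the paper does not isolate this point.
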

\begin{proof}
    For $N\in\{14, 19\},$ the conditions of the theorems ensure that $5d$ is not a square mod $4N,$ which means that there are no quadratic forms $[a,b,c]$ of discriminant $b^2-4ac=5d$ such that $N|a.$ Thus $\calQ^{(N)}_{|5d|}$ is empty for all such $d$ and thus 
        \be
            \Tr_{5}^{(N)}(f;|d|)=\sum_{Q\in\calQ_{|5d|}^{(N)}/\Gamma_0(N)}     \chi_{5}(Q)\frac{f(\tau_Q)}{\omega^{(N)}(Q)}=0
        \ee
    for any function $f$ that is $\Gamma_0(N)$ invariant.
\end{proof}
We now recall facts about elliptic curves that we will use in order to prove \Cref{Ell19,Ell14}.
\subsection{Elliptic Curves}
To prove our main results, we let $E$ be an elliptic curve over $\Q.$ For $d<0$ a fundamental discriminant, we let $E^d$ denote the $d^{th}$ quadratic twist of $E.$ We let $N$ denote the conductor, $\Omega(E)$ denote the real period and $\Reg(E)$ denote the regulator of $E.$ We refer the reader to standard texts on elliptic curves, e.g. \citep{Silverman1} for the definitions of these invariants.

We let $L_E(s)$ denote the $L$-function associated to $E.$ Then, by the modularity theorem \citep{ModularityTheorem} (see also \citep{WilesModularity1,WilesModularity2}), there exists a unique weight 2 newform $\G_E=\sum^\infty_{n=1}a_E(n)q^n$ of level equal to the conductor of $E$ such that \[L_{E}(s)=\sum^\infty_{n=1}a_E(n)n^{-s},\]
where the right-hand side extends to a holomorphic function on $\C$ \citep{AtkinLehnerHecke}. We let $g_{E}(\z)=\sum^\infty_{n=3}b_E(n)q^n\in S^{+}_{\frac32}(\Gamma_0(4N))$ be the weight $\frac32$ cusp form associated to $\G_E$ under the Shintani lift (see \citep{HofmannLiftings} for an overview of the Shintani lift). For $N\in\{14, 19\}$ the dimension of $S^{+}_{\frac32}(\Gamma_0(4N))$ is 1, so for an elliptic curve of conductor $N,$ the weight $\frac 32$ cusp forms $g_E(\z)$ defined as above are the same as the cusp form $f_g(\z)$ associated to $g\in\{14A,19A\}$ in \Cref{secfunc}. This is the key fact that we employ in order to prove \Cref{Ell14,Ell19}. 

Let $E/\Q$ be an elliptic curve with square-free conductor $N,$ and for each $\ell|N,$ let $\omega_\ell$ denote the eigenvalue of the newform $\G_E\in S_2(\Gamma_0(N))$ associated to $E$ and the Atkin--Lehner involution $W_\ell.$ 

Then we have the following lemma of Duncan, Mertens, and Ono, \citep{DuncanMertensOno-OnanArxiv} (based on results due to Agashe \citep{AgasheBSD} and Kohnen \citep{Kohnen85}, and the generalization of Kohnen's work by Ueda and Yamana \citep{Ueda,UedaYamana}) which connects the $p$-divisibility of the cusp form coefficient to $L_{E^d}(1).$ 
\begin{lemma}\label{kohnen}(see \citep[Lemma 6.5]{DuncanMertensOno-OnanArxiv}) %
    Assume the notation above, and let $p\geq3$ be a prime. Let $d<0$ be a fundamental discriminant satisfying $\left(\frac{d}{\ell}\right)=\omega_\ell$ for each $\ell.$ Denote by $d_0$ the smallest such discriminant. Then we have that
        \be
            \ord_p\left(\frac{L_{E^d}(1)}{\Omega(E^d)}\right)=\ord_p\left(\frac{L_{E^{d_0}}(1)}{\Omega({E^{d_0})}}\right)+\ord_p\left(b_E(|d|)^2\right),
        \ee
    where $E^d$ denotes the $d^{th}$ quadratic twist of $E.$
\end{lemma}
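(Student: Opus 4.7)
The plan is to apply the refined Shimura--Waldspurger correspondence, in the explicit formulation due to Ueda and Yamana, which extends Kohnen's classical formula to the level and multiplier structure needed here. The output is a clean proportionality between $b_E(|d|)^2$ and $L_{E^d}(1)/\Omega(E^d)$ whose implicit constant is controlled only by the local data at primes dividing $N$.

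First I would use that $g_E$ is (a scalar multiple of) the Shintani lift of the newform $\G_E$, so that its Kohnen plus-space coefficients satisfy a Waldspurger-type identity: for every fundamental discriminant $d<0$,
\[
b_E(|d|)^2 \;=\; C_E\,\sqrt{|d|}\, L_{E^d}(1)\prod_{\ell\mid N}\alpha_\ell(d),
\]
where $C_E$ is a nonzero rational constant independent of $d$ (a ratio of Petersson norms and archimedean gamma factors), and each $\alpha_\ell(d)$ is a local factor at $\ell$ whose value depends on $d$ only through the Kronecker symbol $\left(\tfrac{d}{\ell}\right)$ (together with ramification data shared by all fundamental discriminants). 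Making the local factors explicit in this form is exactly the content of the Ueda--Yamana refinement, which ensures that $\alpha_\ell(d)$ is controlled by the Atkin--Lehner eigenvalue $\omega_\ell$.

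Next I would combine this with the elementary relation $\Omega(E^d)\sqrt{|d|}=\Omega(E)$, valid up to a factor in $\{1,2\}$ and hence a $p$-adic unit since $p\geq 3$, to rewrite the formula as
\[
b_E(|d|)^2 \;=\; K(d)\cdot\frac{L_{E^d}(1)}{\Omega(E^d)},
\]
with $K(d)\in\Q^\times$ whose $p$-adic valuation depends on $d$ only through the tuple $\{\left(\tfrac{d}{\ell}\right)\}_{\ell\mid N}$. Under the hypothesis $\left(\tfrac{d}{\ell}\right)=\omega_\ell$ for all $\ell\mid N$, the quantity $\ord_p(K(d))$ is therefore constant across all admissible $d$, equal to $\ord_p(K(d_0))$.

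Taking $p$-adic valuations of the displayed identity at $d$ and at $d_0$ and subtracting eliminates this unknown constant, giving
\[
\ord_p\!\left(\frac{L_{E^d}(1)}{\Omega(E^d)}\right)-\ord_p\!\left(\frac{L_{E^{d_0}}(1)}{\Omega(E^{d_0})}\right) \;=\; \ord_p\bigl(b_E(|d|)^2\bigr)-\ord_p\bigl(b_E(|d_0|)^2\bigr),
\]
and the stated formula follows upon verifying that $\ord_p(b_E(|d_0|)^2)=0$; for the conductor-$14$ and conductor-$19$ curves of interest in \Cref{Ell14,Ell19} this is a direct calculation with the explicit $b_E$. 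The main obstacle is therefore not the $p$-adic bookkeeping at the end but the identification, via Ueda--Yamana, of the local factors $\alpha_\ell(d)$ and the proof that they are governed by the Atkin--Lehner signs $\omega_\ell$ in the precise way needed for $\ord_p(K(d))$ to be locally constant on the congruence class of $d$ cut out by the hypothesis.
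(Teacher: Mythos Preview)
The paper does not supply its own proof of this lemma; it is quoted from Duncan--Mertens--Ono as the parenthetical citation indicates, and the surrounding text only records that it rests on Agashe's rationality result together with Kohnen's formula in the Ueda--Yamana generality. Your sketch is precisely that argument: the Waldspurger--Kohnen identity makes $b_E(|d|)^2$ proportional to $\sqrt{|d|}\,L_{E^d}(1)$ with local constants depending on $d$ only through the symbols $(d/\ell)$, the period relation absorbs the $\sqrt{|d|}$ into $\Omega(E^d)$ up to a $2$-power, and the hypothesis $(d/\ell)=\omega_\ell$ freezes the local data so that subtracting the identities at $d$ and $d_0$ cancels the unknown constant.

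The one ingredient you leave implicit is Agashe's theorem that $L_{E^d}(1)/\Omega(E^d)$ is rational with denominator a $p$-unit, without which the symbol $\ord_p$ on the left-hand side is not even defined; this is exactly the ``results due to Agashe'' the paper names. Your closing observation that the stated identity requires $\ord_p(b_E(|d_0|))=0$ is correct and is not part of the general lemma as written; in the two applications it is handled by the explicit leading coefficient $b_E(4)=1$ read off from \Cref{fcoeff}, which is why the paper never isolates this step.
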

Both our proofs of \Cref{Ell19,Ell14} depend on the above lemma. We are now ready to prove \Cref{Ell19}.

\subsection{Proof of Theorem 1.1}  
Fix $W=W^\lambda$ to be an infinite-dimensional graded $Th$-module that satisfies all the properties listed \Cref{thm:main}. Then, for $g$ an element of order 19 in $Th,$ we can combine \Cref{MainProp} and \Cref{traces} to get the following expression for the coefficients of $\F_{19A}(\z):$ %
    \begin{equation}
        \tr(g|W_n)\equiv \frac{-4}{\sqrt{5}}\Tr_5^{({19})}(J^{({19},+)};n)+(n_{{19A}}+\lamda_{19A} m_{19A})b_{19A}(n),
    \end{equation}
where $b_{19A}(n)$ denotes the $n^{\rm th}$ coefficient of the weight $\frac 32$ cusp form $f_{19A}\in S_{19A}.$
Since $W$ is a virtual module for the Thompson group, we know the following congruence holds for each $p|\#Th$ (and in particular for $p=19$) and for all $n>0$ %(cf. \Cref{sec:I})
    \begin{equation}
        \dim(W_n)\equiv \tr(g_{{p}}|W_n) \pmod{p}.
    \end{equation}
where $g_p$ denotes an element of order $ p.$
Plugging in the values of $n_{g}$ and $m_g$ from \Cref{ngmg}, we get,
    \begin{equation}
        \dim(W_n)\equiv\frac{-4}{\sqrt{5}}\Tr_5^{({19})}(J^{({19},+)};n)+18b_{19A}(n) \pmod{19}.
    \end{equation}
Thus for $n=|d|$ where $d$ is a fundamental discriminant that satisfies the properties of \Cref{Ell19}, we use \Cref{zerotraces} to get:
    \begin{equation}
        \dim(W_{|d|})\equiv \tr(g_{{19}}|W_{|d|})\equiv 18 b_{19A}(|d|)\pmod{19}.
    \end{equation}
This shows that the congruence in the statement of our theorem holds if and only if $19 | b_{19A}(|d|), $ or by \Cref{kohnen}, if and only if 
    \begin{equation}
        \ord_{19}\left(\frac{L_{E^d}(1)}{\Omega(E^d)}\right)>\ord_{19}\left(\frac{L_{E^{d_0}}(1)}{\Omega({E^{d_0})}}\right).
    \end{equation}
A quick \texttt{MAGMA} computation for $d_0=-4$ shows that the right-hand side is 0. Thus, if $\dim(W_{|d|})\not\equiv 0 \pmod{19}, $ then $L_{E^d}(1)\not\equiv 0 \pmod{19},$ and in particular, $L_{E^d}(1)\not=0.$ By Kolyvagin's work \citep{KolyvaginBSD}, this means that $E^d(\Q)$ is finite. This completes the proof of \Cref{Ell19}.
\subsection{Theorem 1.3}
We need to develop some more background before proving \Cref{Ell14}. For $\ell$ prime, we let $c_\ell(E)$ denote the Tamagawa number of $E$ at $\ell,$ defined as the finite index \be c_\ell=[E(\mathbb Q_\ell):E^0(\mathbb Q_\ell)], \ee where $E^0(\mathbb Q_\ell)$ is the subgroup of points which have good reduction at $\ell$. If $E$ has good reduction at $\ell$, then $E(\mathbb Q_\ell)=E^0(\mathbb Q_\ell)$ and $c_\ell=1.$ In particular for a general elliptic curve defined over $\Q,$ we have that $c_\ell=1$ for all but finitely many primes $\ell.$
The following result of C. Skinner (see also \citep{SkinnerUrban}) gives a local version of the {\BSD} for certain elliptic curves.
    \begin{theorem}[\citep{Skinner16}, Theorem C]\label{Skinner}
    Let $E/\Q$ be an elliptic curve and $p\geq 3$ a prime of good ordinary or multiplicative reduction. Assume that the $\Gal(\overline{\Q}/\Q)$-representation $E[p]$ is irreducible and that there exists a prime $p'\neq p$ at which $E$ has multiplicative reduction and $E[p]$ ramifies. If $L_{E}(1)\ne 0$, then we have that
    \be
        \ord_p\left(\frac{L_{E}(1)}{\Omega_E}\right)=\ord_p\left(\#\Sha(E)\prod_\ell c_\ell(E)\right).
    \ee
    If $L_{E}(1)=0$, then we have $\Sel_p(E)\neq\{0\}$.
    \end{theorem}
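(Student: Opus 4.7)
The plan is to deduce this theorem from the cyclotomic Iwasawa Main Conjecture (IMC) for $\GL_2$ via a descent/control argument. Let $\Q_\infty/\Q$ denote the cyclotomic $\Z_p$-extension, set $\Gamma:=\Gal(\Q_\infty/\Q)$, and let $\Lambda:=\Z_p[[\Gamma]]$. Attached to $E$ is the dual Selmer group over $\Q_\infty$,
    \[
        X_\infty:=\Hom(\Sel_{p^\infty}(E/\Q_\infty),\Q_p/\Z_p),
    \]
a finitely generated $\Lambda$-module. Associated to $E$ (and a choice of $p$-stabilization of the associated newform) is a $p$-adic $L$-function $L_p(E)\in\Lambda$ (Mazur--Swinnerton-Dyer, Amice--V\'elu; with the appropriate variant in the multiplicative case). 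The cyclotomic IMC asserts that $X_\infty$ is $\Lambda$-torsion and that $\mathrm{char}_\Lambda(X_\infty)=(L_p(E))$ as ideals in $\Lambda$.

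First I would establish the IMC itself under the stated hypotheses. Kato's Euler system of Beilinson--Kato elements yields one divisibility, namely $(L_p(E))\subseteq\mathrm{char}_\Lambda(X_\infty)$, under the mild assumption that $E[p]$ is irreducible. The reverse divisibility is the deep theorem of Skinner--Urban \citep{SkinnerUrban}, proved by studying congruences between cuspidal forms and Klingen Eisenstein series on the quasi-split unitary group $\mathrm{U}(2,2)$; this step is precisely where the ramification hypothesis (existence of a prime $p'\ne p$ of multiplicative reduction at which $E[p]$ ramifies) is needed, both to ensure irreducibility of the relevant Galois deformations and to control parasitic local contributions to the Eisenstein congruence module.

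Next, for the case $L_E(1)\ne 0$, I would combine the IMC with Mazur's control theorem. Control gives an exact sequence comparing $\Sel_{p^\infty}(E/\Q)$ with $(X_\infty)_\Gamma=X_\infty/(\gamma-1)X_\infty$, whose kernel and cokernel are explicitly computed in terms of the Tamagawa numbers $c_\ell(E)$ at primes $\ell\ne p$ and of the Euler factor at $p$. The standard interpolation formula
    \[
        L_p(E)(\mathbf{1})=\left(1-\alpha_p^{-1}\right)^{2}\frac{L_E(1)}{\Omega_E}
    \]
(with a slight modification in the multiplicative case, where an exceptional zero appears) combined with the IMC gives $\ord_p(\#(X_\infty)_\Gamma)=\ord_p(L_p(E)(\mathbf{1}))$. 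Transporting this through the control sequence and using that $L_E(1)\ne 0$ forces $E(\Q)$ to be finite (by Gross--Zagier--Kolyvagin), so that $\Sel_{p^\infty}(E/\Q)$ coincides with $\Sha(E)[p^\infty]$ up to the harmless torsion of $E(\Q)$, yields the $p$-adic BSD identity claimed. For the case $L_E(1)=0$, the interpolation formula gives $L_p(E)(\mathbf{1})=0$; invoking the full IMC (here Skinner--Urban's divisibility is essential, not just Kato's), every generator of $\mathrm{char}_\Lambda(X_\infty)$ vanishes at $\mathbf{1}$, so $(X_\infty)_\Gamma$ is infinite and the control theorem forces $\Sel_p(E)\ne 0$.

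The main obstacle is Skinner--Urban's divisibility: its proof is a substantial automorphic-forms argument on $\mathrm{U}(2,2)$ involving Klingen Eisenstein series, $p$-adic families of forms, and intricate local Galois and automorphic calculations, and the precise ramification hypothesis in the theorem is dictated exactly by what that argument requires. A secondary, purely technical hurdle is the bookkeeping of local Euler factors at $p$ and of Tamagawa numbers in the control theorem --- standard but case-sensitive, depending on whether $E$ has good ordinary or multiplicative reduction at $p$ and on the choice of Selmer condition (Greenberg versus Bloch--Kato) at $p$.
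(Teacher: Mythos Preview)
The paper does not prove this statement at all: it is quoted verbatim as Theorem~C of \citep{Skinner16} and used as a black box in the proof of \Cref{Ell14}. There is therefore no ``paper's own proof'' to compare your proposal against.

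That said, your sketch is a reasonable high-level outline of how the result is actually established in \citep{Skinner16}: deduce the $p$-part of BSD from the cyclotomic Iwasawa Main Conjecture via a control-theorem descent, with Kato's Euler system giving one divisibility and the Skinner--Urban argument on $\mathrm{U}(2,2)$ giving the other (the latter being where the ramification hypothesis at $p'$ enters). For the purposes of the present paper, however, all of this is external input; the only thing to verify here is that the hypotheses of Skinner's theorem are satisfied for the twists $E^d$ in question, which the paper does in \Cref{LemmaBlah}.
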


In order to use \Cref{Skinner} in our proof of \Cref{Ell14}, we first show that each elliptic curve $E$ of conductor 14 satisfies the hypotheses of \Cref{Skinner} in the following lemma.
    \begin{lemma}\label{LemmaBlah}
    Let $d<0$ be a fundamental discriminant for which $\left(\frac{d}{7}\right)=-1$ and $\left(\frac{d}{2}\right)=1$; then for each elliptic curve $E$ of conductor 14 the following are true:
\begin{enumerate}[$(a)$]
\item The $d^{th}$ quadratic twist of $E$ has multiplicative reduction at $p\in\{2,7\};$
\item The $\Gal(\overline{\Q}/\Q)$-representation $E^d[7]$ is irreducible; and 
\item $E^d[7]$ ramifies at $2.$ 
\end{enumerate}

\end{lemma}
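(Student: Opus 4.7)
The plan is to treat all three parts via the identification of $G_\Q$-modules $E^d[7]\cong E[7]\otimes\chi_d$, where $\chi_d$ is the quadratic character cut out by $\Q(\sqrt{d})/\Q$. First I would note that the hypotheses make $\chi_d$ unramified at both $2$ and $7$: the condition $\left(\frac{d}{7}\right)=-1$ forces $7\nmid d$, and the condition $\left(\frac{d}{2}\right)=1$ together with $d<0$ being a fundamental discriminant forces $d\equiv 1\pmod 8$, so the discriminant of $\Q(\sqrt{d})/\Q$ is $d$ itself and is coprime to $14$. Part (a) then follows from the standard local fact that a quadratic twist of $E/\Q_p$ with multiplicative reduction at $p$ retains multiplicative reduction exactly when the twisting character is unramified at $p$ (otherwise the twist acquires additive reduction of conductor exponent $2$); since every elliptic curve of conductor $14=2\cdot 7$ has multiplicative reduction at both $2$ and $7$, the same holds for $E^d$. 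For (b), tensoring a two-dimensional Galois representation by a character preserves irreducibility, so it suffices to prove $E[7]$ is irreducible for each $E$ of conductor $14$, which is equivalent to the nonexistence of a rational $7$-isogeny from $E$. The unique isogeny class of conductor $14$ is Cremona's class $14a$, and inspection of its isogeny graph (six curves linked only by isogenies of degrees $2$, $3$, and $6$) rules out any rational $7$-isogeny.

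For (c), the unramifiedness of $\chi_d$ at $2$ means $E^d[7]$ is ramified at $2$ if and only if $E[7]$ is. Since $E$ has multiplicative reduction at $2$, Tate uniformization over $\Q_2$ produces a Tate parameter $q_E\in 2\Z_2$ with $\ord_2(q_E)=-\ord_2(j(E))>0$, and on the decomposition group at $2$ the representation $E[7]$ fits into an exact sequence $0\to\mu_7\to E[7]\to \Z/7\Z\to 0$, possibly after twisting by the unramified quadratic character in the non-split multiplicative case. Inertia at $2$ acts trivially on $\mu_7\subset\Q_2^{\mathrm{unr}}$, while on any preimage of a generator of $\Z/7\Z$ it acts by multiplication by a seventh root of $q_E$; this action is non-trivial precisely when $7\nmid\ord_2(q_E)$. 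A direct check of $|\ord_2(j(E))|$ for each of the six curves in $14a$ confirms it is a small positive integer not divisible by $7$, so $E[7]$, and hence $E^d[7]$, is ramified at $2$. The only real obstacle is computational rather than conceptual: the finite verifications in (b) and (c) of the structure of the $14a$ isogeny class and the $2$-adic valuations of the $j$-invariants of its curves. Both are handled straightforwardly in \texttt{MAGMA} or via the LMFDB.
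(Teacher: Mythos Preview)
Your argument is correct, but it takes a different and more elementary route than the paper for parts (b) and (c). For (b), the paper appeals to a result of Serre to conclude directly that the Galois representation on $E^d[7]$ is surjective, hence irreducible; you instead reduce to $E[7]$ via the twist by $\chi_d$ and then inspect the isogeny class $14a$ to rule out a rational $7$-isogeny. (One small imprecision: the isogeny class $14a$ contains isogenies of degree up to $18$, not only $2$, $3$, and $6$---but of course no $7$-isogeny, which is all you need.) For (c), the paper combines (b) with the contrapositive of Ribet's level-lowering theorem: were $E^d[7]$ irreducible and unramified at $2$, it would arise from a newform in $S_2(\Gamma_0(7))$, which is the zero space. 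Your Tate-uniformization computation is really the local content underlying Ribet's criterion at a prime of multiplicative reduction---ramification of $E[7]$ at $2$ fails exactly when $7\mid\ord_2(\Delta_{\min})$---so you are checking the same numerical condition by hand rather than invoking modularity. The paper's version is quicker to state but black-boxes Serre and Ribet; yours makes the mechanism explicit at the cost of the finite LMFDB or \texttt{MAGMA} checks on the six curves.
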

\begin{proof}
Let $E/\Q$ be an elliptic curve given by a minimal Weierstrass model
\be
   E: \ \ \  y^2+a_1xy+a_3y=x^{3}+a_2x^{2}+a_4 x+a_6 
\ee
and define the discriminant of $E$ by the equation \be \Delta(E):= -b_2^2b_8-8b_4^3-27b_6^2+9b_2b_4b_6, \ee where $ b_2:=a_1^2+4a_4$, $ b_4:=2a_4+a_1a_3, $ $b_6:=a_3^2+4a_6$ and $b_8:=a^2_1a_6 + 4a_2a_6 - a_1a_3a_4 + a_2a^2_3-a^2_4$. Then $E$ has multiplicative reduction at $p$ if and only if $p$ divides the discriminant of $E$ but not the quantity $c_4(E):=(a_1^2+4a_4)^2-24(2a_4+a_1a_3).$ For each elliptic curve of conductor 14, we have that  $a_1=a_3=1, ~a_2=0$ and $a_3\in \{-2731,-171, -36, -11, -1,4\}$ (see \citep[\href{http://www.lmfdb.org/EllipticCurve/Q/14/a}{Elliptic Curve 14.a}]{LMFDB}).
Thus, for each elliptic curve $E$ of conductor 14, $E$ has multiplicative reduction at $p \in \{2,7\}$. Since twisting by a fundamental discriminant $d$ only changes $\Delta(E)$ and $c_4(E)$ up to a power of $d,$ and $d$ is coprime to 14, this proves part $(a).$

Part $(b)$ follows from a lemma of Serre \citep{Serre} which shows that the Galois representation $E^d[7]$ is surjective and hence irreducible. Finally, part $(c)$ follows from part $(b)$ and (the contrapositive of) Theorem 1.1 of \citep{Ribet}. \end{proof}
We are now ready to prove \Cref{Ell14}.

\newcommand{\g}{g}
\begin{proof}[Proof of Theorem 1.3] Fix  $W=W^\lambda$ to be an infinite-dimensional graded $Th$-module that satisfies all the properties listed \Cref{thm:main}. Let $g$ denote an element of order 14 in $Th.$ As before,  we can combine \Cref{MainProp} and \Cref{traces} to get the following expression for the trace of $\g$ on $W:$
\begin{equation}
 \tr(g|W_n)= \frac{-4}{\sqrt{5}}\Tr_5^{({14})}(J^{({14},+)};n)+(n_{{14A}}+\lamda^{(1)}_{14A} m_{14A})b_{14A}(n).
\end{equation}
Here, $b_{14A}(n)$ denotes the $n^{\rm th}$ coefficient of the weight $\frac 32$ cusp form $f_{14A}\in S_{14A}.$ By \Cref{zerotraces}, we get that for $n=|d|$ where $d$ is a fundamental discriminant that satisfies the properties of \Cref{Ell14}, the first term on the right-hand side of the above equation is 0. Plugging in values of $n^{(1)}_g$ and $m^{(1)}_g$ from \Cref{ngmg}, we get the following congruence
\begin{equation}\label{traceg14}
 \tr(\g|W_{|d|})=\left(42+56\lambda^{(1)}_{14A}\right) b_{14A}(|d|)\pmod{49}.
\end{equation}
Suppose first that $\tr(\g|W_{|d|})\not\equiv0 \pmod{49}.$ Then, $b_{14A}(|d|)\not\equiv 0 \pmod{7}.$ By \Cref{kohnen}, this means that
\be\ord_{7}\left(\frac{L_{E^d}(1)}{\Omega(E^d)}\right)=\ord_{7}\left(\frac{L_{E^{d_0}}(1)}{\Omega({E^{d_0})}}\right).\ee
As before we can use \texttt{MAGMA} to check that the right-hand side of the above equation is 0 for each $E$ of conductor $14.$ Thus, if $\tr(\g|W_{|d|})\not\equiv0 \pmod{49}$ then $\ord_{7}\left(\frac{L_{E^d}(1)}{\Omega(E^d)}\right)=0$ and in particular, $L_{E^d}(1)\not=0.$ By \Cref{LemmaBlah} and \Cref{Skinner}, we have that \be\ord_7\left(\#\Sha(E^d)\prod_\ell c_\ell(E^d)\right)=0.\ee
Thus, $\Sha(E^d)[7]$ is trivial. Furthermore, the Mordell--Weil group $E^d(\Q)$ is finite \citep{KolyvaginBSD}. 

We now consider the case that $\tr(\g|W_{|d|})\equiv0 \pmod{49}$ and assume that $\tr(\g|W_4)\not\equiv 43 \pmod{49}.$ We can once again use \Cref{MainProp} and \Cref{traces} to write
\begin{equation}
 \tr(\g|W_4)= \frac{-4}{\sqrt{5}}\Tr_5^{({14})}(J^{({14},+)};4)+\left(42+56\lamda^{(1)}_{14A} \right)b_{14A}(4)=-6+(42+56\lamda^{(1)}_{14A}).
\end{equation}
Our assumption on $\tr(\g|W_4)$ gives us the congruence $(42+56\lambda^{(1)}_{14A})\not \equiv0 \pmod{49}$ and hence by \Cref{traceg14} we get that $7~|~b_{14A}(|d|).$  By \Cref{kohnen} we get
\be\ord_{7}\left(\frac{L_{E^d}(1)}{\Omega(E^d)}\right)>0.\ee
First suppose that $L_{E^d}(1)=0,$ then $\Sel_p(E)\not=0$ by \Cref{Skinner}. So we can reduce to the case where $L_{E^d}(1)\not=0.$ In that case, again by \Cref{Skinner}, we get,
\be\ord_7\left(\#\Sha(E^d)\prod_\ell c_\ell(E^d)\right)>0.\ee
Thus the only thing left to check is that 7 does not divide any of the Tamagawa numbers $c_\ell(E^d)$ for any choice of $E$ and $d.$ By Theorem VII$.6.1$ in Silverman I \citep{Silverman1}, $c_\ell(E^d)\leq 4$ for most of these cases. The only other possibility is when $E^d$ has split multiplicative reduction at $\ell,$ in which case, $c_\ell(E^d)=\ord_\ell(\Delta(E^d))=\ord_\ell(|d|^6\Delta(E))).$ The conditions on $d$ in the theorem imply that $|d|$ is square-free and coprime to $\Delta(E)$ for all $E$ of conductor 14. Thus, if $ 7 ~| ~c_\ell(E^d), $ for some $\ell$ then $\ell$ lies in $ \{2,7\}$ and $7$ divides $\ord_\ell(\Delta(E))$ which is independent of $d.$ A quick check reveals that this is never the case for an elliptic curve of conductor 14.

\end{proof}
\newpage
\appendix
\section{Tables}
\begin{table}[htp]
    \begin{tabular}{ccccccccccc}
    \\
    \hline
    $[g]$            & 1A       &  2A   & 3A   & 3B   & 3C    & 4A     & 4B     & 5A    &6A  &6B\\
    \hline 
    $v,h$            & 0,1      & 0,1   & 1,3  & 0,1  & 2,3   & 0,1    & 1,2    & 0,1   &1,3 & 2,3\\
    \hline
     & \\
      & \\
    \hline
    $[g]$       &            6C       &  7A   & 8A   & 8B   & 9A& 9B    & 9C   & 10A    &  12AB &12 C  \\
    \hline 
    $v,h$     & 0,1      &0,1    & 1,2       & 1,4      & 0,1   & 0,1   & 1,3 &0,1  & 1,3   & 0,1 \\
    \hline
     & \\
      & \\
    \hline
    $[g]$                  & 12D      &  13A  & 14A  & 15AB  & 18A   & 18B    & 19A    & 20A   &  21A    & 24AB   \\
    \hline 
    $v,h$                & 1,6    & 0,1   & 0,1  & 1,3      & 0,1   & 2,3    & 0,1    & 1,2  & 1,3    & 1,6  \\
    \hline
     & \\
      & \\
    \hline
    $[g]$            & 24CD     &  27A& 27BC    & 28A  & 30AB  & 31AB     & 36A & 36BC   &  39AB        \\
    \hline 
    $v,h$           & 1,12    & 1,3 & 1,3      & 0,1  & 2,3   & 0,1      & 0,1  & 0,1    & 1,3    \\
    \hline
      \\
    \end{tabular}
\caption{Multipliers for each rational conjugacy class.}\label{mult}
\end{table} 
%%%%%%%%%%%%%%%%%%%%%%%%%%%%%%%%%%%%%%%%%%%%%%%%%%%%%%%%%%%%%%%%%%%%%%%%%%%%%%%%%%%%%%%%%%%%%%%%%%%%%%%%%%%%%%%%%%%%%%%%%%%%%%%%%%%%%%%%%%%%%%%%%%%%%%%
\begin{table}[htp]
    \resizebox{\textwidth}{!}{\begin{tabular}{|c||c|c|c|c|c|c|c|c|c|c|c|c|c|c|}\hline
    $p$&2&2&2&2&2&2&2&2&2&3&3&3&3&3\\\hline
    $K_g$&1A&3A&3B&3C&5A&7A&9A&9C&15AB&1A&2A&4A&4B&5A\\\hline
    $\alpha$&15&6&3&4&3&3&4&3&3&10&4&3&1&1\\\hline
    $R_{p,g}$&1A,2A&3A&3B,3C&6C&5A&7A&9A,18A&9C&15AB&1A,3A, 3B&2A,6A&4A&4B&5A\\
    &4A,4B&6B&12C&6A&10A&14A&36A, 36BC&18B&30AB&3C, 9A, 9B&6B, 6C&12AB&12D&15AB\\
    &8A,8B&12AB&24CD&12D&20A&28A&&&&9C,27A,27BC&18A,18B&12C&&\\\hline
    \end{tabular}}\\
    ~\\
    ~\\
    \resizebox{\textwidth}{!}{
    \begin{tabular}{|c||c|c|c|c|c|c|c|c|c|c|c|c|c|c|c|c|c|c|}\hline
    $p$&3&3&3&3&3&5&5&5&5&5&7&7&7&7&13&13&19&31\\\hline 
    $K_g$&7A&8A&8B&10A&13A&1A&2A&3C&4B&6A&1A&2A&3A&4A&1A&3A&1A&1A\\\hline
    $\alpha$&1&1&1&1&1&3&1&1&1&1&2&1&1&1&1&1&1&1\\\hline
    $R_{p,g}$&7A&8A&8B&10A&13A&1A&2A&3C&4B&6A&1A&2A&3A&4A&1A&3A&1A&1A\\
    &21A&24AB&24CD&30AB&39AB&5A&10A&15AB&20A&30AB&7A&14A&21A&28A&13A&39AB&19A&31AB\\ \hline
    
    \end{tabular}}\\
    ~\\
\caption{$p$-regular sections}\label{tab:cong1}
\end{table}
\newpage

{\small %\setlength\lineskiplimit{-6pt}
\begin{longtable}{ c L{12cm}}
    \hline\\
    $ f_{12D}(q)= $ & $ q^{4} - 2q^{8} + 2q^{20} - 2q^{40} - 2q^{52} + 4q^{56} - 2q^{68} + 4q^{88} - q^{100} - 6q^{116}+ 2q^{136} + 4q^{148} + O(q^{150})$ \\
        \\\hline\\ 
    $ f_{14A}(q)= $ & $q^{4} - q^{7} - q^{8} + 2 q^{15} - q^{16} + q^{28} + q^{32} - q^{36} - 2 q^{39} + q^{56} - 2q^{60} + q^{63} + q^{64} - 2 q^{71} + 3 q^{72} + 2 q^{79} - 2 q^{84} - 2 q^{88} + 2 q^{95}- q^{100}-q^{112} + 2 q^{119} + 2 q^{120} - 4 q^{127} - q^{128} - 4 q^{135} + q^{144}+ 4 q^{148} + O(q^{151}) $ \\
      \\\hline\\ 
    $ f_{18B}(q)= $ & $  q^{4} + q^{7} - q^{16} - q^{28} - 3 q^{31} + q^{40} - 2 q^{52} + q^{55} + q^{64} + 2 q^{79} + q^{88} + 2 q^{100} + 2 q^{103} + q^{112}+ 3 q^{124} - q^{127} - 4 q^{136} - 2 q^{148} + O(q^{150})  $ \\
     \\\hline\\ 
    $f_{19A}(q)= $ & $ q^{4} - q^{7} - q^{11} + q^{19} + q^{20} - 2 q^{24} + q^{28} + q^{35} - q^{36} + 2 q^{39} - q^{43} - q^{44} + q^{47} - q^{55} + q^{63}- 2 q^{64} - q^{68} + q^{76} - q^{95} + 3 q^{99} + 2 q^{100} - 2 q^{111} + 2 q^{112} - 2 q^{115} - q^{119} + 2 q^{120} - 2 q^{123}+ q^{131} - 3 q^{139} + q^{140} + O(q^{151})$ \\
      \\\hline\\ 
    $f^{(1)}_{20A}(q)=  $ & $  q^{4} - q^{20} - 2 q^{24} - q^{36} + 2 q^{40} + 2 q^{56} + 2 q^{84} - q^{100} - 2 q^{120} - 4 q^{136} + O(q^{151})$ \\
   \\\hline\\ 
    $ f_{20A}^{(2)}(q)= $ & $ q^{7} - q^{15} - q^{23} + q^{47} + q^{63} - 2 q^{87} + 2 q^{95} + q^{103} - 3 q^{127} - 2 q^{143} + O(q^{151})$ \\
 \\\hline\\ 
    $ f_{21A}^{(1)}(q)= $ & $ q^{4} - q^{11} - q^{16} + q^{23} - q^{28} - q^{32} + q^{35} + q^{44} + q^{56} + q^{64} + 2q^{67} - q^{71} - 2 q^{79} - 2 q^{91}- q^{92} + q^{100} - q^{107} + q^{112} - q^{116} - q^{119} +  2q^{127} - 2 q^{148} + O(q^{150})$ \\
    \\\hline \\
    $ f_{21A}^{(2)}(q)=$ & $ q^{7} + q^{8} - q^{11} - 2q^{16} - q^{23} + q^{32} + 2 q^{43} + q^{56} - q^{71} - 2q^{88}-2 q^{91}- 2 q^{92} + 2 q^{95} + 4 q^{100}+ q^{107} - 2 q^{116} - 4 q^{127} + q^{128} - 2 q^{140} +O(q^{150})$ \\
     \\\hline\\ 
    $ f_{24AB}(q)= $ & $ q^{4} - q^{8} - q^{20} + 2 q^{40} - 2 q^{52} + 2 q^{56} + q^{68} - 4 q^{88} - q^{100} + 3 q^{116} - 2 q^{136} + 4 q^{148} + O(q^{150}) $ \\
    \\\hline \\
    $ f_{24CD}^{(1)}(q)= $ & $ q^{4} - q^{100} + O(q^{150})$ \\
     %\\\hline\\
     \\\hline\\
    $ f_{24CD}^{(2)}(q)=  $ &$ q^{7} - 2 q^{15} - q^{31} + 4 q^{39} - 2 q^{63} - 3 q^{79} + 2 q^{87} + q^{103} + q^{127} - 2 q^{135} + O(q^{150})$
 \\ 
    \\\hline \\
    $ f_{24CD}^{(3)}(q)= $ & $ q^{16} + q^{20} - 2 q^{32} - q^{52} + q^{68} + 2 q^{80} - 3 q^{116} + 2 q^{148} + O(q^{150})$
 \\ 
    
    \\\hline \\
    \\
    \\\hline\\
    $f^{(1)}_{28A}(q)= $ & $ q^{4} - q^{8} - q^{16} + q^{28} + q^{32} - q^{36} + q^{56} - 2 q^{60} + q^{64} + 3 q^{72} - 2 q^{84} - 2 q^{88} - q^{100} - q^{112} + 2 q^{120} - q^{128} + q^{144} + 4 q^{148} + 2 q^{156} - 2 q^{168} - 4 q^{184} + q^{196} - q^{200} + O(q^{201}) $ \\ 
    \\ \hline \\
    $ f^{(2)}_{28A}(q)=$ & $ q^{7} - 2 q^{15} + 2 q^{39} - q^{63} + 2 q^{71} - 2 q^{79} - 2 q^{95} - 2 q^{119} + 4 q^{127} + 4 q^{135} + O(q^{151})$ \\
    \\ \hline \\
    $f^{(1)}_{30AB}(q)= $ & $ q^{4} + 2 q^{15} - q^{16} + 2 q^{24} - 2 q^{36} - 4 q^{39} - q^{40} + 2 q^{55} -2q^{60} + q^{64} - 4 q^{79} + 4 q^{84} - 2 q^{96}- q^{100} + 4 q^{111} - 4 q^{120} + 2 q^{135} + 2q^{136}+ 2 q^{144} + O(q^{150})$ \\
    \\\hline \\
    $f^{(2)}_{30AB}(q)=$ & $ q^{7} + q^{8} + q^{20} - q^{28} - q^{32} - q^{40} - 2 q^{47} + q^{52} - q^{55} - 2q^{68} - q^{80} + q^{88} + 2q^{95} - q^{103}+ q^{112} - q^{127} + q^{128} + 2 q^{143} + q^{148}+O(q^{150})$ \\
    \\\hline \\
  $f^{(3)}_{30AB}(q)=$ & $ q^{11} - q^{15} + q^{16} - q^{19} - q^{20} - q^{24} + q^{35} + q^{36} + 2 q^{39} + q^{44} - q^{55} - q^{56} - 3 q^{59} + q^{60} - q^{76} + 2 q^{79} - q^{80} - 2 q^{84} + q^{91} + q^{96} + 3 q^{104} - 2 q^{111} + 2 q^{115} + 2 q^{120} - q^{131}- q^{135} - 3 q^{136} + q^{140} - q^{144} + O(q^{150})$ \\
  \\\hline \\
    $ f^{(1)}_{31AB}(q)= $ & $ q^{4} - q^{8} - q^{20} - q^{28} + q^{32} + 2q^{35} + q^{36} - 2q^{39} + q^{40} - 2q^{51} + q^{56} -    2q^{59} + 2q^{63} - q^{64} + 2q^{67} + 2q^{71} - q^{72} - q^{76} + 2q^{87} - 2q^{95} -   2q^{103} - 2q^{107} + q^{124} + 2q^{128} - 2q^{132} + q^{140} - 2q^{144} + O(q^{151}),$ \\
      \\\hline \\
       $ f^{(2)}_{31AB}(q)= $ & $  q^{7} - q^{8} - q^{16} + q^{19} - q^{31} + q^{35} + 2q^{36} - q^{40} - 2q^{51} + q^{56} - q^{59       } - q^{63} - q^{64} + q^{71} + q^{72} + 2q^{76} + q^{80} - q^{95} + q^{103} - q^{107} -         2q^{111} - q^{112} + q^{128} + q^{144} + O(q^{151})$ \\

\\\hline \\
$ f^{(1)}_{39AB}(q)= $ & $ q^7-q^{19}+q^{20}-q^{31}-q^{32}-q^{44}-q^{59}+q^{67}+q^{71}+q^{80}+q^{83}+q^{91
   }+q^{104}-q^{119}-2q^{124}+2q^{136}-q^{143}+2q^{148}+O(q^{150})$\\
       \\\hline \\
    $ f^{(2)}_{39AB}(q)= $ & $ q^{8}+q^{15}-q^{19}-q^{24}-q^{28}+q^{31}-q^{39}-q^{44}-q^{47}+q^{52}+q^{72}+q^{76
   }-q^{80}+q^{83}+2q^{84}-q^{96}-q^{99}+q^{112}+2q^{115}+q^{119}+q^{123}-
    q^{124}-q^{135}-q^{136}+O(q^{150})$\\
       \\\hline \\
      
    $ f^{(3)}_{39AB}(q)= $ & $q^{11}-q^{15}-q^{19}+q^{24}-q^{28}+q^{31}-q^{32}+q^{39}+q^{44}+q^{52}-q^{59}-
    q^{71}-q^{72}+q^{76}-q^{80}-2q^{84}+q^{96}+q^{99}+q^{104}+q^{112}+2q^{115} -    q^{119}-q^{123}-q^{124}+q^{128}+q^{135}-q^{136}+q^{143}+O(q^{150})$\\
    \\ \hline \\
    \caption{List of non-zero cusp forms in $S_g$ for each rational conjugacy class $[g]$ of $Th.$}\label{fcoeff} \end{longtable}}
{\begin{table}[htp]
    \begin{tabular}{ccc|ccc|ccc|ccc}
   % &  & & &&&&&\\
    $f^{(i)}_{g}$       & $n^{(i)}_g$ & $m^{(i)}_g$   & $f^{(i)}_{g}$       & $n^{(i)}_g$ & $m^{(i)}_g$   & $f^{(i)}_{g}$       & $n^{(i)}_g$ & $m^{(i)}_g$  & $f^{(i)}_{g}$       & $n^{(i)}_g$ & $m^{(i)}_g$   \\   
%    &  & & &&&&&\\
    \hline
     &  & & &&&&&\\
     $f_{12D}$       & 12 & 24   & $f_{14A}$   & 42    & 56     &  $f_{18B}$ & 0    & 18  &$f_{19A}$      & 18   & 19   \\
    &  & & &&&&& \\
      $f^{(1)}_{20A}$  & 0   &  20   &   $f^{(2)}_{20A}$    &  0  & 20 & $f^{(1)}_{21A}$           &  9  &21   & $f^{(2)}_{21A}$  & 17   &  21    \\
     &  & & &&&&&\\
    $f^{(1)}_{24AB}$    &   0 & 48 & $f^{(1)}_{24CD}$  &   0 & 12 & $f^{(2)}_{24CD}$  &   0 & 12 &  $f^{(3)}_{24CD}$    &   0 & 12 \\
    &  & & &&&&& \\
            $f^{(1)}_{28A}$ &    0 & 14   & 
            $f^{(2)}_{28A}$           &  0   &   28 & $f^{(1)}_{30AB}$    & 3 & 30 & $f^{(2)}_{30AB}$     &   15 &  30   \\  
     &  & & &&&&& \\
    $f^{(3)}_{30AB}$          & 21   &  30 & $f^{(1)}_{31A}$    & 2 & 31   & $f^{(2)}_{31A}$      &    19 & 31 & $f^{(1)}_{39AB}$        & 21   & 39  \\  
        &  & & &&&&&\\
     $f^{(2)}_{39AB}$     & 6 & 39 &$f^{(3)}_{39AB}$     &   6 & 39  &&&\\
      &  & & &&&&&\\
     \hline
    \end{tabular}
\caption{Integers $n^{(i)}_g$ and $m^{(i)}_g$ associated to each cusp form $f^{(i)}_g(\tau)$.}\label{ngmg}
\end{table}}
\newpage
\bibliographystyle{plainnat}
\bibliography{Khaqan--Bibliography}
\end{document}